\numberwithin{equation}{section}
\newtheorem{theorem}{Theorem}[section]
\newtheorem{lemma}[theorem]{Lemma}
\newtheorem{proposition}[theorem]{Proposition}
\theoremstyle{definition}
\newtheorem{remark}[theorem]{Remark}
\theoremstyle{definition}
\theoremstyle{definition}
\def\dashint{\operatorname%
{\,\,\text{\bf-}\kern-.98em\DOTSI\intop\ilimits@\!\!}}
\def\\det{\text{det}}
\def\.5{\frac{1}{2}}
\def\bR{\mathbb{R}}
\def\fp{\mathfrak{p}}
\def\cA{\mathcal{A}}
\def\cP{\mathcal{P}}
\newcommand{\RN}[1]{%
  \textup{\uppercase\expandafter{\romannumeral#1}}%
}
\renewcommand{\epsilon}{\varepsilon}
\begin{document}
\title[Nonlocal parabolic equations]{Dini and Schauder estimates for nonlocal fully nonlinear parabolic equations with drifts}
\author[H. Dong]{Hongjie Dong}
\address[H. Dong]{Division of Applied Mathematics, Brown University,
182 George Street, Providence, RI 02912, USA}
\email{Hongjie\_Dong@brown.edu}
\thanks{H. Dong and H. Zhang were partially supported by the NSF under agreements DMS-1056737 and DMS-1600593.}

\author[T. Jin]{Tianling Jin}
\address[T. Jin]{Department of Mathematics, The Hong Kong University of Science and Technology,
Clear Water Bay, Kowloon, Hong Kong}
\email{tianlingjin@ust.hk}
\thanks{T. Jin was partially supported by Hong Kong RGC grant ECS 26300716 and HKUST initiation grant IGN16SC04.}

\author[H. Zhang]{Hong Zhang}
\address[H. Zhang]{Division of Applied Mathematics, Brown University,
182 George Street, Providence, RI 02912, USA}
\email{Hong\_Zhang@brown.edu}

\begin{abstract}
We obtain Dini and Schauder type estimates for concave fully nonlinear nonlocal parabolic equations of order $\sigma\in (0,2)$ with rough and non-symmetric kernels, and drift terms. We also study such linear equations with only measurable coefficients in the time variable, and obtain Dini type estimates in the spacial variable. This is a continuation of the work \cite{DZ16, DZ162} by the first and last authors.
\end{abstract}
\maketitle

\section{Introduction and main result}
The paper is a continuation of the work \cite{DZ16, DZ162} by the first and last authors, where they obtained Schauder type estimates for concave fully nonlinear nonlocal parabolic equations and Dini type estimates for concave fully nonlinear nonlocal elliptic equations. Here, we consider concave fully nonlinear nonlocal parabolic equations with Dini continuous coefficients, drifts and nonhomogeneous terms, and establish a $C^\sigma$ estimate under these assumptions.

The study of second-order equations with Dini continuous coefficients and data can date back to at least 1970s, when Burch \cite{Burch78} first considered divergence type linear elliptic equations with Dini continuous coefficients and data, and estimated the modulus of continuity of the derivatives of solutions. Later work for second-order linear or concave fully nonlinear elliptic and parabolic equations with Dini data includes, for example, Sperner \cite{Sp81}, Lieberman \cite{Lieb87}, Safonov \cite{Saf88}, Kovats \cite{Kovats97}, Bao \cite{Bao02}, Duzaar-Gastel \cite{DG02}, Wang \cite{Wang06}, Maz'ya-McOwen \cite{MM11}, Li \cite{Y.Li2016}, and many others.

The regularity theory for nonlocal elliptic and parabolic equations has been developed extensively in recent years. For example, $C^\alpha$ estimates, $C^{1,\alpha}$ estimates, Evans-Krylov type theorem, and Schauder estimates were established in the past decade. See, for instance, \cite{CS09,CS11,DK11,DK13,KL13,CD14,MP, CK15,JX152, JX15,JS15,Mou16,IJS}, and the references therein.  In particular, Mou \cite{Mou16} investigated a class of concave fully nonlinear nonlocal elliptic equations with smooth symmetric kernels, and obtained the $C^{\sigma}$ estimate under a slightly stronger assumption than the usual Dini continuity on the coefficients and data.  The author implemented a recursive Evans-Krylov theorem, which was first studied by Jin and Xiong \cite{JX15}, as well as a perturbation type argument.  By using a novel perturbation type argument, the first and last authors proved the $C^{\sigma}$ estimate for concave fully nonlinear elliptic equations in \cite{DZ162}, which relaxed the regularity assumption to simply Dini continuity and also removed the symmetry and smoothness assumptions on the kernels.

In this paper, we extend the results in \cite{DZ162} from elliptic equations to parabolic equations with drifts, that is, we study fully nonlinear nonlocal parabolic equations in the form
\begin{equation}
                            \label{eq1.1}
\partial_t u = \inf_{\beta\in \mathcal{A}}\big(L_\beta u+b_\beta Du+f_\beta\big),
\end{equation}
where $\mathcal{A}$ is an index set and for each $\beta\in \mathcal{A}$,
\begin{align*}
L_\beta u = \int_{\bR^d}\delta u(t,x,y)K_\beta(t,x,y)\,dy,
\end{align*}
\begin{equation*}
\delta u(t,x,y) =
\begin{cases}
u(t,x+y)-u(t,x)\quad& \text{for}\,\,\sigma\in (0,1);\\
u(t,x+y)-u(t,x)-y\cdot Du(t,x)\chi_{B_1}\quad& \text{for}\,\,\sigma= 1;\\
u(t,x+y)-u(t,x)-y\cdot Du(t,x)\quad& \text{for}\,\,\sigma\in(1,2),
\end{cases}
\end{equation*}
and
\begin{equation*}
K_{\beta}(t,x,y)=a_\beta(t,x,y)|y|^{-d-\sigma}.
\end{equation*}
This type of nonlocal operators was first investigated by Komatsu  \cite{Komatsu84}, Mikulevi$\check{\text{c}}$ius and Pragarauskas \cite{MP92,MP}, and later by Dong and Kim \cite{DK11,DK13}, and Schwab and Silvestre \cite{SS16}, etc.

We assume that
$$
(2-\sigma)\lambda\le a_\beta(\cdot,\cdot,\cdot)\le (2-\sigma)\Lambda\quad\forall\ \beta\in\mathcal A
$$ for some ellipticity constants $0<\lambda\le \Lambda$, and is merely measurable with respect to the $y$ variable. When $\sigma=1$, we additionally assume that
\begin{equation}
\int_{S_r}yK_{\beta}(t,x,y)\,ds_y=0,\label{eq10.58}
\end{equation}
for any $r>0$, where $S_r$ is the sphere of radius $r$ centered at the origin.

We also assume that $b_\beta\equiv 0$ when $\sigma<1$ and $b_\beta=b(t,x)$ is independent of $\beta$ when $\sigma=1$.

We say that a function $f$ is Dini continuous if its modulus of continuity $\omega_f$ is a Dini function, i.e.,
$$
\int_0^1 \omega_f(r)/r\,dr<\infty.
$$
We need the Dini continuity assumptions on the coefficients of \eqref{eq1.1}:
\begin{equation}\label{eq:modulusofcontinuity}
\begin{cases}
&\sup_{\beta\in \mathcal{A}}\displaystyle\int_{B_{2r}\setminus B_r}\big|a_{\beta}(t,x,y)-a_{\beta}(t',x',y)\big|\, dy\le \Lambda r^{d}\omega_a(\max\{|x-x'|,|t-t'|^{1/\sigma}\})\quad \forall~r>0,\\
&\sup_{\beta\in \mathcal{A}}\|f_{\beta}\|_{L_\infty(Q_1)}<\infty,\quad
\sup_{\beta\in \mathcal{A}}|f_{\beta}(t,x)-f_{\beta}(t',x')|\le \omega_f(\max\{|x-x'|,|t-t'|^{1/\sigma}\}),\\
&\sup_{\beta\in \mathcal{A}} \|b_\beta\|_{L_\infty(Q_1)}\le N_0,\quad \sup_{\beta\in \mathcal{A}}|b_\beta(t,x)-b_\beta(t',x')|\le \omega_b(\max\{|x-x'|,|t-t'|^{1/\sigma}\}),\\
&\mbox{where }N_0>0, \mbox{and }\omega_a,\omega_b, \omega_f \mbox{ are all Dini functions.}
\end{cases}
\end{equation}

In Theorem \ref{thm 1} below, $\omega_u$ denotes the modulus of continuity of $u$ in $(-1,0)\times \bR^d$, that is.
\[
|u(t,x)-u(t',x')|\le \omega_u(\max\{|x-x'|,|t-t'|^{1/\sigma}\})\ \ \mbox{for all }(t,x),(t,x')\in (-1,0)\times \bR^d.
\]
We also use the notation $C^{1,\sigma^+}(Q_1)$ to denote $C_{t,x}^{1,\sigma+\varepsilon}(Q_1)$ for some arbitrarily small  $\varepsilon>0$. This condition is only needed for $L_\beta u$ to be well defined, and may be replaced by other weaker conditions.

\begin{theorem}\label{thm 1}
Let $\sigma\in (0,2)$, $0<\lambda\le \Lambda<\infty$, and $\cA$ be an index set. Assume for each $\beta\in \cA$, $K_{\beta}$ satisfies \eqref{eq10.58} when $\sigma=1$, and the Dini continuity assumption \eqref{eq:modulusofcontinuity} holds for all $(t,x), (t',x')\in Q_1$.
Suppose $u\in C^{1,\sigma^+}(Q_1)$ is a solution of \eqref{eq1.1} in $Q_1$ and is Dini continuous in $(-1,0)\times \bR^d$.
Then we have that $\partial_t u$ is uniform continuous  and the a priori estimate:
\begin{align}
\label{eq12.17}
\|\partial_t u\|_{L_\infty(Q_{1/2})}+[u]^x_{\sigma;Q_{1/2}}\le
C\sum_{j=0}^\infty\big(2^{-j\sigma}\omega_u(2^j)+\omega_u(2^{-j})+\omega_f(2^{-j})\big),
\end{align}
where $C>0$ is a constant depending only on $d$, $\sigma$, $\lambda$, $\Lambda, N_0$, $\omega_b$, and $\omega_a$. Moreover, when $\sigma\neq 1$, we have
$$
\sup_{(t_0,x_0)\in Q_{1/2}}[u]^x_{\sigma;Q_r(t_0,x_0)}\to 0 \quad\text{as}\quad r\to 0
$$
with a decay rate depending only on $d$, $\sigma$, $\lambda$, $\Lambda$, $\omega_a$, $\omega_f$, $\omega_u$, $N_0$, and $\omega_b$. When $\sigma=1$, $Du$ is uniformly continuous in $Q_{1/2}$ with a modulus of continuity controlled by the quantities before.
\end{theorem}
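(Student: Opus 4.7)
The strategy is to adapt the perturbation argument of \cite{DZ162} from the elliptic to the parabolic setting, working at dyadic scales around each point $(t_0,x_0)\in Q_{1/2}$. For each $k\ge 0$, set $r_k=2^{-k}$ and compare $u$ on the parabolic cylinder $Q_{r_k}(t_0,x_0)$ with the solution $v_k$ of the frozen-coefficient equation
$$
\partial_t v=\inf_{\beta\in\cA}\bigl(\tilde L^{(k)}_\beta v+b_\beta(t_0,x_0)Dv\bigr),
$$
where $\tilde L^{(k)}_\beta$ has kernel $a_\beta(t_0,x_0,y)|y|^{-d-\sigma}$ that is constant in $(t,x)$; in the case $\sigma<1$ the drift is dropped, and in the case $\sigma=1$ the cancellation condition \eqref{eq10.58} is what makes the operator well-defined against linear parts of $u$. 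The exterior and initial/parabolic-boundary data of $v_k$ are matched to $u$ so that $w_k=u-v_k$ vanishes in the appropriate sense.

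For $v_k$ I would invoke the parabolic Evans-Krylov type regularity established in \cite{DZ16}, which yields a $C^{\sigma+\alpha}$ estimate on a smaller concentric cylinder, with a tail term controlled by the global behavior of $u$ through $\omega_u$; this is the step producing the series $\sum_{j\ge 0}2^{-j\sigma}\omega_u(2^j)$ in the final bound. The equation for $w_k$ has an inhomogeneity bounded, via \eqref{eq:modulusofcontinuity}, by a quantity of the form
$$
\omega_a(r_k)\,M_k+\omega_b(r_k)\,N_k+\omega_f(r_k),
$$
where $M_k,N_k$ are seminorms of $u$ on $Q_{r_k}(t_0,x_0)$ dictated by the order $\sigma$. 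The nonlocal ABP-type maximum principle then converts this into an $L_\infty$ bound on $w_k$ of size $r_k^\sigma$ times the above.

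With these two ingredients in hand I would set up the standard dyadic recursion: at each scale $r_k$ extract an approximating object $P_k$ (a constant when $\sigma<1$, an affine function when $\sigma=1$, a $1$-in-$t$, $2$-in-$x$ polynomial when $\sigma>1$) such that $\|u-P_k\|_{L_\infty(Q_{r_k})}\lesssim r_k^\sigma\epsilon_k$, with $\epsilon_k$ controlled by the previous error plus the one-step perturbation above. Summing the geometric recursion produces $\sum_{j}\bigl(\omega_u(2^{-j})+\omega_f(2^{-j})\bigr)$ and, combined with the tail piece, the full right-hand side of \eqref{eq12.17}. Comparing the coefficients of $P_k$ at consecutive scales yields the uniform continuity of $\partial_t u$ and, for $\sigma=1$, of $Du$; the decay $[u]^x_{\sigma;Q_r(t_0,x_0)}\to 0$ as $r\to 0$ follows from the vanishing of tails of Dini sums, provided $\sigma\neq 1$ so that the approximating polynomial structure is consistent.

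The main obstacle I anticipate is the critical case $\sigma=1$ in the presence of a nontrivial drift, where $b(t_0,x_0)Dv_k$ is of the same order as $\tilde L^{(k)}_\beta v_k$. There the Evans-Krylov step must be performed uniformly in $b$ (using the cancellation \eqref{eq10.58} in a crucial way), and the recursion has to track both the value and the gradient of $u$ at every scale; the Dini modulus $\omega_b$ enters the one-step error as $\omega_b(r_k)\,[Du]_{L_\infty(Q_{r_k})}$, and without care this term could destroy the summability. The non-symmetry and roughness in $y$ of the kernels, handled by the $L^1$-averaged modulus in \eqref{eq:modulusofcontinuity} rather than a pointwise one, parallels the treatment in \cite{DZ162} and should extend with only notational changes, so the genuinely new content is the coupling of the drift, the parabolic scaling, and the modulus $\omega_u$ of $u$.
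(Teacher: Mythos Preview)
Your overall perturbative scheme is the right one, but several of the concrete choices you make diverge from what actually works here, and at least two of them are genuine gaps rather than cosmetic differences.

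First, your frozen equation keeps a drift $b_\beta(t_0,x_0)Dv$. The Evans--Krylov type input available (Proposition~\ref{prop3.1} from \cite{DZ16}) is for \emph{driftless} operators $\partial_t v=\inf_\beta(L_\beta v+f_\beta)$; it does not come with a drift, and in the critical case $\sigma=1$ you certainly cannot treat $bDv$ as lower order. The paper sidesteps this in two different ways: for $\sigma>1$ it freezes the \emph{entire} drift contribution as the constant $b_\beta(0,0)Du(0,0)$ and absorbs it into $f_\beta(0,0)$; for $\sigma=1$ it first performs the Galilean change of variables $\hat u(t,x)=u(t,x-b(0,0)t)$, which removes the frozen drift altogether and leaves only the small remainder $(\hat b-b(0,0))D\hat u$ in the error $C_k$. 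You correctly flag $\sigma=1$ as the delicate case, but you do not supply this mechanism, and without it the scheme you describe does not close.

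Second, your approximating objects $P_k$ are off. For $\sigma\in(0,1)$ the paper subtracts a polynomial of degree one in $t$ (an element of $\cP_t$), not a constant; for $\sigma\in[1,2)$ it subtracts a first-order polynomial in \emph{both} $t$ and $x$ (an element of $\cP_1$). A ``$2$-in-$x$'' polynomial is never used, since $\sigma<2$, and indeed would over-subtract. More importantly, the recursion in the paper is not on $\|u-P_k\|_{L_\infty(Q_{r_k})}$ but on the weighted sums
\[
\sum_{k\ge 0}2^{k(\sigma-\alpha)}\sup_{(t_0,x_0)}\inf_{p}[u-p]_{\alpha/\sigma,\alpha;Q_{2^{-k}}(t_0,x_0)},
\]
whose equivalence with $[u]^x_{\sigma}+\|\partial_t u\|_{L_\infty}$ is the content of Lemma~\ref{lem2.1}. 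This semi-norm construction, together with Lemmas~\ref{lem2.4} and~\ref{lem2.7} controlling $u-p_0$ via mollification, is what makes the summation close when you add the iterates over nested domains $Q^l$; a plain $L_\infty$ recursion does not directly control the $C^\sigma$ seminorm here because the nonlocal operator reads the solution globally and one must keep the estimates in $C^\alpha$ form across annuli. Finally, the passage from the global Propositions~\ref{prop3.21}--\ref{prop3.23} to the local estimate \eqref{eq12.17} requires a separate localization by cut-offs $\eta_k$ and an iteration over $Q^k=Q_{1-2^{-k}}$, which you have not outlined.
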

This theorem improves Theorem 1.1 in \cite{DZ162} in the following two ways. First, the equation \eqref{eq1.1} is parabolic and has drift terms. Second, the right-hand side of the estimate \eqref{eq12.17} depends only on the semi-norms of $u$ and $f$, in particular, not on $\sup_{\beta\in \mathcal{A}}\|f_{\beta}\|_{L_\infty(Q_1)}$.

\begin{remark}
When $\sigma\in(1,2)$ in Theorem \ref{thm 1}, by interpolation inequalities we have that
\[
\begin{split}
[Du]^t_{\frac{\sigma-1}{\sigma}; Q_{1/2}}&\le C(\|\partial_t u\|_{L_\infty(Q_{1/2})}+[u]^x_{\sigma;Q_{1/2}})\le C\sum_{j=0}^\infty\big(2^{-j\sigma}\omega_u(2^j)+\omega_u(2^{-j})+\omega_f(2^{-j})\big).
\end{split}
\]
\end{remark}

The same proof of Theorem \ref{thm 1} can be used to prove Schauder estimates for concave fully nonlinear nonlocal parabolic equations with drifts.  To this end, we need the H\"older continuity assumptions on the coefficients of \eqref{eq1.1}:
\begin{equation}\label{eq:modulusofcontinuity2}
\begin{cases}
&\sup_{\beta\in \mathcal{A}}\displaystyle\int_{B_{2r}\setminus B_r}\big|a_{\beta}(t,x,y)-a_{\beta}(t',x',y)\big|\, dy\le
\Lambda r^{d}\max\{|x-x'|^\gamma,|t-t'|^{\gamma/\sigma}\}\quad \forall ~ r>0,\\
&\sup_{\beta\in \mathcal{A}}\|f_{\beta}\|_{L_\infty(Q_1)}<\infty,\quad
\sup_{\beta\in \mathcal{A}}|f_{\beta}(t,x)-f_{\beta}(t',x')|\le C_f\max\{|x-x'|^\gamma,|t-t'|^{\gamma/\sigma}\},\\
&\sup_{\beta\in \mathcal{A}} \|b_\beta\|_{L_\infty(Q_1)}\le N_0,\quad \sup_{\beta\in \mathcal{A}}|b_\beta(t,x)-b_\beta(t',x')|\le C_b\max\{|x-x'|^\gamma,|t-t'|^{\gamma/\sigma}\},\\
&\mbox{where }N_0, C_f, C_b>0, \mbox{and }\gamma\in (0,1).
\end{cases}
\end{equation}

Recall that we assume that $b_\beta\equiv 0$ when $\sigma<1$, and $b_\beta=b(t,x)$ is independent of $\beta$ when $\sigma=1$.

\begin{theorem}\label{thm:schauder}
Let $\sigma\in (0,2)$,  $0<\lambda\le \Lambda<\infty$, and $\cA$ be an index set. There exists $\hat\alpha$ depending only on $d,\lambda, \Lambda$ and $\sigma$ (uniform as $\sigma\to 2^-$) such that the following holds. Let $\gamma\in (0,\hat\alpha)$ such that $\sigma+\gamma<2$ is not an integer. Assume for each $\beta\in \cA$, $K_{\beta}$ satisfies \eqref{eq10.58} when $\sigma=1$, and the H\"older continuity assumptions \eqref{eq:modulusofcontinuity2} hold for all $(t,x), (t',x')\in Q_1$. Suppose $u\in C^{1+\gamma/\sigma,\sigma+\gamma}(Q_1)\cap C^{\gamma/\sigma,\gamma}((-1,0)\times \bR^d)$ is a solution of \eqref{eq1.1} in $Q_1$,
then we have the a priori estimate:
\begin{align}
\label{eq:schauderestimates}
[u]_{1+\gamma/\sigma,\sigma+\gamma;Q_{1/2}}\le C\|u\|_{\gamma/\sigma,\gamma;(-1,0)\times \bR^d}+CC_f,
\end{align}
where $C>0$ is a constant depending only on $d$, $\sigma,\gamma$, $\lambda$, $\Lambda$, $N_0$, and $C_b$.
\end{theorem}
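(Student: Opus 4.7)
The plan is to run the same iterative perturbation scheme that establishes Theorem \ref{thm 1}, now with the Dini moduli $\omega_a, \omega_b, \omega_f$ replaced by the power moduli $r^\gamma, C_b r^\gamma, C_f r^\gamma$ and with $\omega_u(r)$ replaced by $\|u\|_{C^{\gamma/\sigma,\gamma}} r^\gamma$ on $(-1,0)\times\bR^d$. Under this specialization the tail series in \eqref{eq12.17} collapses to a convergent geometric sum; more importantly, each step of the iteration can be upgraded to produce polynomial approximations of parabolic order $\sigma + \gamma$ rather than just $\sigma$, which is precisely the order needed to recover the spatial $C^{\sigma+\gamma}$ and time $C^{1+\gamma/\sigma}$ seminorms via the parabolic Campanato characterization.

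Concretely, for each $z_0 = (t_0,x_0)\in Q_{1/2}$ and each dyadic scale $r_k = 2^{-k}$, the plan is to construct inductively a polynomial $P_k(t,x)$ (of degree zero in $x$ if $\sigma+\gamma<1$ and degree one in $x$ if $\sigma+\gamma>1$, with time dependence matching $1 + \gamma/\sigma$) such that
\begin{equation*}
\|u - P_k\|_{L_\infty(Q_{r_k}(z_0))} \leq M r_k^{\sigma + \gamma},
\qquad M \sim \|u\|_{C^{\gamma/\sigma,\gamma}((-1,0)\times\bR^d)} + C_f.
\end{equation*}
The inductive step considers $w = u - P_k$, which solves an equation of the same form whose right-hand side is bounded on $Q_{r_k}(z_0)$ by $CM r_k^{\sigma+\gamma}$: the three contributions come from the oscillations of $a_\beta, b_\beta, f_\beta$ against the inductive bound on $w$. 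The approximation lemma is then the Evans--Krylov/recursive $C^{\sigma + \hat\alpha}$ interior regularity theorem applied to the translation-invariant concave operator with frozen kernels $K_\beta(z_0,\cdot)$ and frozen drift $b_\beta(z_0)$, which produces an incremental polynomial $Q_k$ so that $P_{k+1} = P_k + Q_k$ satisfies the induction with the \emph{same} constant $M$. The restriction $\gamma < \hat\alpha$ is what yields the geometric gain factor $\tfrac{1}{2}$ at each step, closing the induction. Summing over $k$ and over base points, the Campanato characterization produces \eqref{eq:schauderestimates}.

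The main obstacle is the critical regime $\sigma = 1$, in which the drift $b\cdot Du$ has exactly the same scaling as $L_\beta$ and cannot be treated as a pure perturbation. As in Theorem \ref{thm 1}, the assumption that $b_\beta = b(t,x)$ is independent of $\beta$ together with the symmetry condition \eqref{eq10.58} permits the drift at the base point to be absorbed into the frozen translation-invariant concave operator without destroying the concave structure needed as input to the Evans--Krylov step. A secondary technical point is that the right-hand side of \eqref{eq:schauderestimates} must depend only on $C_f$ and not on $\sup_\beta \|f_\beta\|_{L_\infty(Q_1)}$; this is handled, just as in the improvement over \cite{DZ162} noted after Theorem \ref{thm 1}, by subtracting $f_\beta(z_0)$ from $f_\beta$ before each application of the perturbation lemma, so that only the H\"older oscillation of $f_\beta$ feeds into the iteration.
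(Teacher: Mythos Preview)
Your proposal is a valid strategy and is essentially the standard Campanato polynomial-approximation iteration for nonlocal Schauder estimates (as in Serra \cite{JS15} or Jin--Xiong \cite{JX15}). However, it is \emph{not} the route the paper takes. The paper's proof of Theorem~\ref{thm:schauder} deliberately recycles the Dini machinery already built in Section~\ref{sec:Dini}: it starts from the intermediate iteration inequalities \eqref{eq 3.8}, \eqref{eq 3.88}, \eqref{eq:startingpoint} obtained in the Dini proofs, multiplies both sides by $2^{(k+k_0)\gamma}$, and then takes a \emph{supremum} in $k$ rather than a sum. Because an auxiliary exponent $\alpha\in(\gamma,\hat\alpha)$ is used in those inequalities, the factor $2^{(k+k_0)(\gamma-\alpha)}$ decays geometrically and the sup closes; this yields Proposition~\ref{prop:global1} with $\|u\|_{\alpha/\sigma,\alpha}$ on the right. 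A second localization step with cutoffs $\eta_k$ (identical to Step~1--3 in the proof of Theorem~\ref{thm 1}) is then needed to trade the global $C^{\alpha/\sigma,\alpha}$ norm for the hypothesized $C^{\gamma/\sigma,\gamma}$ norm. Your direct iteration avoids introducing the auxiliary $\alpha$ and the seminorm series of Lemma~\ref{lem2.1}, at the cost of not reusing the Dini infrastructure; the paper's approach has the opposite trade-off.

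Two points in your sketch deserve tightening. First, when $\sigma+\gamma>1$ your polynomial $P_k$ has a linear part in $x$, so $u-P_k$ is not globally in $C^{\gamma/\sigma,\gamma}$; the nonlocal tail in the frozen-coefficient step (the $\sum_j 2^{-j\sigma}M_j$ of Proposition~\ref{prop3.1}) must therefore be controlled via the restricted oscillation $M_j$ rather than a global seminorm, and the coefficients of $P_k$ must be shown to stay bounded inductively. The paper handles this through Lemma~\ref{lem2.7} and the truncation $g_M$; your write-up should say how you do it. Second, for $\sigma=1$ the drift at the base point is not literally ``absorbed into the operator via \eqref{eq10.58}''; rather, one performs the Galilean change of variables $\hat u(t,x)=u(t,x-b(z_0)t)$ (as in Proposition~\ref{prop3.23}) so that the frozen drift vanishes. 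The symmetry condition \eqref{eq10.58} is needed afterwards so that the transformed operator still satisfies the structural hypotheses required by Proposition~\ref{prop3.1}.
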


The essential new part of Theorem 1.3 is for the case $\sigma=1$. For $\sigma<1$, Theorem 1.3 is just Theorem 1.1 in \cite{DZ16}. Even though the H\"older continuity assumption appeared slightly differently, the proof in \cite{DZ16} can be carried out with minimum modifications. For $\sigma>1$, the drift is a lower-order perturbation and the conclusion can be proved without assuming $\sigma+\gamma<2$ by using Theorem 1.1 in \cite{DZ16} and interpolation inequalities.

In the case of the linear equation:
\begin{equation}\label{eq:linear}
\partial_t u= L u+bDu+f,
\end{equation}
the estimate \eqref{eq:schauderestimates} holds for all $\gamma\in(0,\sigma)$. Again, we assume that $b\equiv 0$ when $\sigma<1$.

\begin{theorem}\label{thm:linearschauder}
Let $\sigma\in (0,2)$,  $0<\lambda\le \Lambda<\infty$, $\gamma\in (0,\sigma)$ such that $\sigma+\gamma$ is not an integer. Assume $K$ satisfies \eqref{eq10.58} when $\sigma=1$, and the H\"older continuity assumptions \eqref{eq:modulusofcontinuity2} hold for all $(t,x), (t',x')\in Q_1$. Suppose $u\in C^{1+\gamma/\sigma,\sigma+\gamma}(Q_1)\cap C^{\gamma/\sigma,\gamma}((-1,0)\times \bR^d)$ is a solution of \eqref{eq:linear} in $Q_1$, then we have the a priori estimate:
\begin{align}
\label{eq:linearschauderestimates}
[u]_{1+\gamma/\sigma,\sigma+\gamma;Q_{1/2}}\le C\|u\|_{\gamma/\sigma,\gamma;(-1,0)\times \bR^d)}+CC_f,
\end{align}
where $C>0$ is a constant depending only on $d$, $\sigma,\gamma$, $\lambda$, $\Lambda$, $N_0$, and $C_b$.
\end{theorem}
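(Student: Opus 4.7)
The plan is to mirror the perturbation/freezing scheme that drives Theorem \ref{thm:schauder}, but with the recursive Evans--Krylov step replaced by the arbitrarily high interior regularity available for the constant-coefficient linear equation. Fix $(t_0,x_0)\in Q_{1/2}$, set $L_0 v(x)=\int_{\bR^d}\delta v(x,y)K(t_0,x_0,y)\,dy$ and $b_0=b(t_0,x_0)$, and at each dyadic scale $r=2^{-j}$ solve the frozen Dirichlet problem $\partial_t v=L_0 v+b_0 Dv$ in $Q_r(t_0,x_0)$ with $v=u$ outside. By the maximum principle (Alexandrov--Bakelman--Pucci for nonlocal operators, as in \cite{DZ16}) the difference $w=u-v$ is controlled by the oscillations of the coefficients and of $f$, which under \eqref{eq:modulusofcontinuity2} give $\|w\|_{L_\infty(Q_r(t_0,x_0))}\le C r^{\sigma+\gamma}$ after the semi-norms of $u$ are incorporated into the constant.

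The crucial new ingredient, the one that lifts the restriction $\gamma<\hat\alpha$ and permits any $\gamma\in(0,\sigma)$, is the higher-order interior regularity for the constant-coefficient linear equation $\partial_t v=L_0 v+b_0 Dv$. Since this equation is translation invariant, difference quotients of $v$ in $t$ and $x$ again solve the same equation; iterating the base $C^{\sigma+\alpha_0}$ estimate (for some small $\alpha_0>0$) from Theorem 1.1 of \cite{DZ16} then yields, for every $\gamma\in(0,\sigma)$ with $\sigma+\gamma\notin\bZ$,
\[
[v]_{1+\gamma/\sigma,\sigma+\gamma;Q_{r/2}(t_0,x_0)}\le C r^{-(\sigma+\gamma)}\|v\|_{L_\infty}.
\]
For $\sigma=1$ one needs to know that this estimate is uniform in $|b_0|\le N_0$; this is handled by absorbing the drift into the symbol of $L_0$, so that $L_0+b_0 D$ is itself the generator of a translation-invariant $1$-stable L\'evy process, whose heat kernel is smooth with bounds depending only on $d,\lambda,\Lambda,N_0$.

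With these two ingredients in hand I would run the usual Campanato iteration: at each $(t_0,x_0)$ and each scale $r_j=2^{-j}$, approximate $u$ by the parabolic Taylor polynomial $P_j$ of $v$ at $(t_0,x_0)$ of the appropriate order ($\lfloor\sigma+\gamma\rfloor$ in $x$, and degree one in $t$ when $\sigma+\gamma>1$). Combining the approximation bound $\|u-v\|_{L_\infty(Q_{r_j})}\le Cr_j^{\sigma+\gamma}$ with the frozen-coefficient estimate on $v-P_j$ gives $\|u-P_j\|_{L_\infty(Q_{r_j}(t_0,x_0))}\le Cr_j^{\sigma+\gamma}$; comparing $P_j$ with $P_{j+1}$ and summing the geometric series produces a limit polynomial and the pointwise $C^{1+\gamma/\sigma,\sigma+\gamma}$ bound at $(t_0,x_0)$, uniformly in $(t_0,x_0)\in Q_{1/2}$, which is \eqref{eq:linearschauderestimates}.

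The main obstacle is the case $\sigma=1$, where the drift scales exactly like $L_0$: one must check (i) that the symmetry condition \eqref{eq10.58} is preserved under freezing so that $L_0$ remains a bona fide order-$1$ nonlocal operator with no spurious drift, and (ii) that the constant-$b_0$ frozen equation admits the higher-order regularity with constants uniform in $|b_0|\le N_0$, via the L\'evy-symbol argument sketched above. For $\sigma<1$ the assumption $b\equiv 0$ reduces the statement directly to Theorem 1.1 of \cite{DZ16}, while for $\sigma>1$ the drift is a lower-order perturbation and one can bootstrap from Theorem 1.1 of \cite{DZ16} using parabolic interpolation, so no restriction $\sigma+\gamma<2$ is needed there.
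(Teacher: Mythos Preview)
Your approach is sound in spirit but takes a genuinely different route from the paper's. You propose to redo the Campanato/freezing iteration of Theorem~\ref{thm:schauder} from scratch, the only change being that the constant-coefficient solution $v$ now enjoys $C^{\sigma+\gamma}$ interior regularity for \emph{any} $\gamma\in(0,\sigma)$ (obtained by iterating difference quotients of $v$, which solve the same translation-invariant linear equation). The paper instead treats Theorem~\ref{thm:schauder} as a black box giving a base estimate at some small exponent $\gamma_0<\hat\alpha$, and then applies difference quotients directly to the \emph{variable-coefficient} solution $u$: since the equation is linear, $u^h(t,x)=(u(t,x+he)-u(t,x))/h^{\gamma-\gamma_0}$ again satisfies an equation of the same type, to which the $\gamma_0$-estimate applies; one more application of the identity \eqref{eq 2.1} converts the resulting bound on $[u^h]_{\sigma+\gamma_0}$ into a bound on $[u]_{\sigma+\gamma}$, and similarly in $t$. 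The paper's argument is considerably shorter, avoids rebuilding the entire dyadic absorption machinery, and never needs to discuss the frozen Dirichlet problem with drift at all.

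Two places in your sketch would need more care if carried out. First, the bound $\|w\|_{L_\infty(Q_r)}\le Cr^{\sigma+\gamma}$ hides a constant depending on $[u]^x_{\sigma}$ (through $(L-L_0)u$) and on $\|Du\|_{L_\infty}$ (through $(b-b_0)Du$), so the Campanato iteration is circular until you run an absorption argument of the $Q^l$ type used in Propositions~\ref{prop3.21}--\ref{prop3.23}; you acknowledge this only implicitly. Second, for $\sigma=1$ the claim that $L_0+b_0D$ is ``the generator of a $1$-stable L\'evy process'' is not quite right: adding a constant drift to an operator satisfying \eqref{eq10.58} destroys that cancellation condition. The clean way (and the one the paper uses in Proposition~\ref{prop3.23}) is the Galilean change of variables $\hat v(t,x)=v(t,x-b_0t)$, which removes the drift and leaves an operator still satisfying \eqref{eq10.58}, at the cost of working on a sheared cylinder comparable to $Q_r$ with constants depending on $N_0$.
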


It is natural to assume that $\gamma<\sigma$ in Theorem \ref{thm:linearschauder}, since \eqref{eq:modulusofcontinuity2} will imply that $f$ is independent of $t$ if $0<\sigma<\gamma$. In many applications, $a$ will be independent of $t$ as well under the assumptions of \eqref{eq:modulusofcontinuity2} and $\sigma<\gamma$. Then, we  can always differentiate the equation \eqref{eq:linear} in $t$, and to obtain higher-order regularity in $t$ by applying the result of Theorem \ref{thm:linearschauder} above.

We are also interested in the linear equation \eqref{eq:linear} when $K$, $b$, and $f$ are Dini continuous in $x$ but only measurable in the time variable $t$, that is, they satisfy:

\begin{equation}\label{eq:modulusofcontinuity3}
\begin{cases}
&\displaystyle\int_{B_{2r}\setminus B_r}\big|a(t,x,y)-a(t,x',y)\big|\, dy
\le \Lambda r^{d}\omega_a(|x-x'|) \quad\forall~r>0,\\
&\|f\|_{L_\infty(Q_1)}<\infty,\quad
|f(t,x)-f(t,x')|\le \omega_f(|x-x'|),\\
&\|b\|_{L_\infty(Q_1)}\le N_0,\quad |b(t,x)-b(t,x')|\le \omega_b(|x-x'|),\\
&\mbox{where }N_0>0, \mbox{and }\omega_a,\omega_b, \omega_f \mbox{ are all Dini functions.}
\end{cases}
\end{equation}

In Theorem \ref{thm 2} below, $\omega_u$ denotes the modulus of continuity of $u$ in $x$ uniform for all $t$, that is,
\[
|u(t,x)-u(t,x')|\le \omega_u(|x-x'|),\quad \forall ~ (t,x), (t,x')\in (-1,0)\times \bR^d.
\]
\begin{theorem}\label{thm 2}
Let $\sigma\in (0,2)$, $0<\lambda\le \Lambda<\infty$. Assume that $K$ satisfies \eqref{eq10.58} when $\sigma=1$, and the Dini continuity assumption \eqref{eq:modulusofcontinuity3} holds
for all $(t,x), (t,x')\in Q_1$.
Suppose $u\in C^{1,\sigma^+}(Q_1)$ is a solution of \eqref{eq:linear}  in $Q_1$ and is Dini continuous in $x$ in $(-1,0)\times \bR^d$.
Then we have the a priori estimate: for $\sigma\in(0,2)$,
\begin{align}
\label{eq12.17-2}
\|\partial_t u\|_{L_\infty(Q_{1/2})}+[u]^x_{\sigma;Q_{1/2}}\le C\sum_{j=0}^\infty\big(2^{-j\sigma}\omega_u(2^j) +\omega_u(2^{-j})+\omega_f(2^{-j})\big),
\end{align}
where $C>0$ is a constant depending only on $d$, $\sigma$, $\lambda$, $\Lambda, N_0$, $\omega_b$, and $\omega_a$. Moreover, when $\sigma\neq 1$, we have
$$
\sup_{(t_0,x_0)\in Q_{1/2}}[u]^x_{\sigma;Q_r(t_0,x_0)}\to 0 \quad\text{as}\quad r\to 0
$$
with a decay rate depending only on $d$, $\sigma$, $\lambda$, $\Lambda$, $\omega_a$, $\omega_f$, $\omega_u$, $N_0$, and $\omega_b$. When $\sigma=1$, $Du$ is uniformly continuous in $x$ in $Q_{1/2}$ with a modulus of continuity controlled by the quantities before.
Also, $\partial_t u$ is uniformly continuous in $x$ in $Q_{1/2}$ with a modulus of continuity controlled by $d$, $\sigma$, $\lambda$, $\Lambda$, $\omega_a$, $\omega_f$, $\omega_u$, $N_0$, $\omega_b$, and $\|u\|_{L_\infty}$.
\end{theorem}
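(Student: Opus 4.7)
The plan is to mimic the strategy of Theorem~\ref{thm 1} via a Campanato/perturbation iteration, but with two key modifications to handle the merely measurable time dependence of the coefficients. At each point $(t_0,x_0)\in Q_{1/2}$ I will approximate $u$ on dyadic cylinders $Q_{2^{-k}}(t_0,x_0)$ by polynomials \emph{in the spatial variable only}---of degree $0$ when $\sigma<1$ and of degree $1$ when $\sigma\ge 1$---rather than the space-time polynomials used in Theorem~\ref{thm 1}. This is forced by the fact that $\omega_a,\omega_b,\omega_f,\omega_u$ are now Dini only in $x$. The aim at each scale is to construct $p_k(x)$ with $\sup_{Q_{2^{-k}}(t_0,x_0)}|u-p_k|\lesssim 2^{-k\sigma}\varepsilon_k$, where $\sum_k \varepsilon_k$ is bounded by the right-hand side of \eqref{eq12.17-2}.

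The first ingredient is a spatial regularity estimate for the frozen equation
\begin{equation*}
\partial_t v = \int_{\bR^d} \delta v(t,x,y)\, K(t,x_0,y)\,dy + b(t,x_0)\,Dv,
\end{equation*}
whose operator is $x$-translation invariant but still measurable in $t$. Since the spatial differences $v(t,\cdot+h)-v(t,\cdot)$ solve the same equation, iterating the spatial H\"older estimates for nonlocal parabolic equations with measurable-in-$t$ coefficients from \cite{DK11,DK13,SS16} yields a $C^{\sigma+\bar\alpha}$-in-$x$ estimate for some $\bar\alpha>0$, uniform in $t$. This is the analogue of the frozen-equation step used in \cite{DZ162} and in Theorem~\ref{thm 1}, with regularity measured purely in the spatial direction.

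With this in hand, at scale $r_k=2^{-k}$ I compare $u$ on $Q_{r_k}(t_0,x_0)$ with a solution $v_k$ of the frozen equation sharing the parabolic boundary data of $u$. A maximum-principle argument shows that $u-v_k$ is controlled by $\omega_a(r_k)$, $\omega_b(r_k)$, and $\omega_f(r_k)$, each multiplied by seminorms of $u$ that are already under control from the previous steps of the iteration. Combining this with the frozen-equation $C^{\sigma+\bar\alpha}$-in-$x$ bound extracts the next approximating spatial polynomial $p_{k+1}$; summing the dyadic errors produces \eqref{eq12.17-2}. The bound on $\|\partial_t u\|_{L_\infty(Q_{1/2})}$ then follows by reading $\partial_t u = Lu + bDu + f$ pointwise once $[u]^x_\sigma$ is controlled, and the spatial uniform continuity of $\partial_t u$ follows from the convergence of the iteration together with the uniform continuity in $x$ of $Lu$, $bDu$, and $f$ (the first using the vanishing of the local $[u]^x_\sigma$ on shrinking cylinders).

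The main obstacle is the critical case $\sigma=1$, in which the drift has the same scaling as the principal operator. There the cancellation hypothesis \eqref{eq10.58} must be used to ensure that replacing $K(t,x,\cdot)$ by $K(t,x_0,\cdot)$ in the principal-value integral produces only a subprincipal error, and one must additionally absorb the linear correction $D p_k$ into $b(t,x_0)\,D v_k$ at each scale. A more minor technical point is that, because the approximating polynomials $p_k$ are chosen independent of $t$, the $t$-oscillation of $u$ on $Q_{r_k}$ must be absorbed directly into the error $\varepsilon_k$; this is consistent with the scheme precisely because the equation forces $\partial_t u$ to be controlled by spatial quantities once $[u]^x_\sigma$ is bounded, but it requires slightly more careful bookkeeping than in the space-time polynomial iteration used for Theorem~\ref{thm 1}.
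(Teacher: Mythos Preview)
Your overall plan---freeze the coefficients in $x$, compare with a reference solution, and run a dyadic Campanato iteration---is correct and is what the paper does. The gap is in how you measure the approximation error. You propose to find $t$-independent spatial polynomials $p_k(x)$ with $\sup_{Q_{2^{-k}}(t_0,x_0)}|u-p_k|\lesssim 2^{-k\sigma}\varepsilon_k$ and $\sum_k\varepsilon_k<\infty$. But the $t$-oscillation of $u$ on $Q_{2^{-k}}$ is of order $2^{-k\sigma}\|\partial_t u\|_{L_\infty}$, so any $t$-independent $p_k$ forces $\varepsilon_k\gtrsim\|\partial_t u\|_{L_\infty}$ for every $k$, and $\sum_k\varepsilon_k$ diverges. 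The point you label a ``minor technical point'' is therefore the central structural obstacle; the circular appeal to ``$\partial_t u$ is controlled once $[u]^x_\sigma$ is bounded'' does not close because the summability of $\varepsilon_k$ is needed \emph{before} $[u]^x_\sigma$ has been bounded. The paper avoids this by never measuring error in $L^\infty$ over a parabolic cylinder: for $\sigma<1$ it iterates the purely spatial seminorm $[u]^x_{\alpha;Q_{2^{-k}}}$, and for $\sigma\ge 1$ it subtracts the first-order spatial Taylor expansion of the \emph{spatial} mollification $u^{(2^{-k})}$ at $(t,x_0)$, i.e.\ a polynomial in $x$ with $t$-dependent coefficients, and iterates $\sup_{t}\inf_{p\in\mathcal P_x}[u(t,\cdot)-p]^x_{\alpha;B_{2^{-k}}(x_0)}$. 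In both cases the optimal polynomial may vary with $t$, so the $t$-oscillation never enters the iterated quantity.

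A second difference concerns $\sigma=1$. You keep the frozen drift $b(t,x_0)\,Dv$ in the reference equation and hope to obtain $C^{1+\bar\alpha}_x$ regularity by iterating H\"older estimates; at critical scaling with a merely measurable-in-$t$ drift this is not immediate from the references you cite. The paper instead removes the frozen drift by the change of variables $\hat u(t,x)=u\bigl(t,x+\int_t^0 b(s,x_0)\,ds\bigr)$, reducing to a reference problem with $b\equiv 0$, for which the needed $C^{\sigma+\alpha}_x$ estimate (Proposition~\ref{prop3.1-linear}) comes from \cite{MP}. If you insist on keeping the drift in the frozen equation, you will effectively need this same change of variables to justify the regularity step.
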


If $K$, $b,$ and $f$ in \eqref{eq:linear} are H\"older continuous in $x$ locally but only measurable in the time variable $t$, that is, they satisfy:
\begin{equation}\label{eq:modulusofcontinuity4}
\begin{cases}
&\displaystyle\int_{B_{2r}\setminus B_r}\big|a(t,x,y)-a(t,x',y)\big|\, dy
\le \Lambda r^{d}|x-x'|^\gamma \quad\forall~r>0,\\
&\|f\|_{L_\infty(Q_1)}<\infty,\quad
|f(t,x)-f(t,x')|\le C_f|x-x'|^\gamma,\\
&\|b\|_{L_\infty(Q_1)}\le N_0,\quad |b(t,x)-b(t,x')|\le C_b|x-x'|^\gamma,\\
&\mbox{where }N_0, C_a, C_b>0, \mbox{and } \gamma\in(0,1),
\end{cases}
\end{equation}
then we have
\begin{theorem}\label{thm:linearschauder2}
Let $\sigma\in (0,2)$,  $0<\lambda\le \Lambda<\infty$, $\gamma\in (0,1)$ such that $\sigma+\gamma$ is not an integer. Assume $K$ satisfies \eqref{eq10.58} when $\sigma=1$, and the H\"older continuity assumptions \eqref{eq:modulusofcontinuity4} hold for all $(t,x), (t,x')\in Q_1$. Suppose $u\in C^{1,\sigma+\gamma}(Q_1)\cap C^{\gamma}_x((-1,0)\times \bR^d)$ is a solution of \eqref{eq:linear} in $Q_1$, then we have the a priori estimate:
\begin{align}
\label{eq:linearschauderestimates2}
[\partial_t u]^x_{\gamma;Q_{1/2}}+[u]^x_{\sigma+\gamma,;Q_{1/2}}\le C\|u\|^x_{\gamma;(-1,0)\times \bR^d)}+CC_f,
\end{align}
where $C>0$ is a constant depending only on $d$, $\sigma,\gamma$, $\lambda$, $\Lambda$, $N_0$, and $C_b$.
\end{theorem}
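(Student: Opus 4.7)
The plan is to follow the perturbation scheme used in the proof of Theorem~\ref{thm 2} (the measurable-in-$t$, Dini-in-$x$ case), replacing the Dini modulus of the coefficients by the H\"older modulus $r\mapsto r^\gamma$, and iterating with higher-degree polynomial approximations so as to gain $\sigma+\gamma$ (rather than only $\sigma$) spatial derivatives. Fix $(t_0,x_0)\in Q_{1/2}$ and work at dyadic scales $r_k=\mu^k$ for a small $\mu>0$ to be chosen.

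At each scale $r=r_k$, I would freeze the $x$-dependence of $a$, $b$, $f$ at $x=x_0$ and let $v$ solve the frozen equation $\partial_t v=\tilde L v+\tilde b\,Dv+\tilde f$ in $Q_r(t_0,x_0)$ (the coefficients now depending only on $t$ and $y$) with the same exterior data as $u-P_k$, where $P_k$ is the approximating polynomial from the previous step of the induction. The H\"older assumption \eqref{eq:modulusofcontinuity4} gives $\|(L-\tilde L)u\|_{L_\infty(Q_r)}\le Cr^\gamma$ together with analogous bounds for the drift and forcing, so that the standard $L_\infty$-estimate for linear extremal nonlocal parabolic equations applied to $w=u-P_k-v$ yields $\|w\|_{L_\infty(Q_r)}\le Cr^{\sigma+\gamma}$. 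On the other hand, because the kernel of $\tilde L$ and the coefficient $\tilde b$ are independent of $x$, translates and differences of $v$ in $x$ satisfy the same equation; iterating standard $L_\infty$-plus-tail bounds then yields arbitrary $C^k_x$ estimates for $v$ on $Q_{r/2}(t_0,x_0)$. In particular, the Taylor polynomial $Q$ of $v$ in $x$ at $x_0$ of degree $\lfloor\sigma+\gamma\rfloor$ satisfies $\|v-Q\|_{L_\infty(Q_{\mu r})}\le C(\mu r)^{\sigma+\gamma}$.

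Setting $P_{k+1}=P_k+Q$ and combining the two estimates produces a recursive bound of the form $\|u-P_{k+1}\|_{L_\infty(Q_{r_{k+1}})}\le Nr_{k+1}^{\sigma+\gamma}$ once $\mu$ is chosen small and $N$ large enough (depending on $\|u\|^x_\gamma$, $C_f$, $C_b$, and $N_0$), which closes the induction. Summing the sequence of polynomial corrections and invoking a Campanato-type characterization yields the $[u]^x_{\sigma+\gamma;Q_{1/2}}$ bound in \eqref{eq:linearschauderestimates2}. The H\"older bound on $\partial_t u$ in $x$ then follows directly from the equation: each of $Lu(t,x)-Lu(t,x')$, $b(t,x)Du(t,x)-b(t,x')Du(t,x')$, and $f(t,x)-f(t,x')$ is $O(|x-x'|^\gamma)$ by virtue of the just-obtained spatial $C^{\sigma+\gamma}$ bound on $u$, the H\"older hypothesis on $b$ and $f$, and the fact that $L$ maps $C^{\sigma+\gamma}_x$ into $C^\gamma_x$ (which is where the condition $\sigma+\gamma\notin\mathbb Z$ is used).

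I expect the delicate step to be the critical case $\sigma=1$, in which the drift has the same scaling as $L$. There it is essential to invoke the cancellation \eqref{eq10.58} when rescaling $\tilde L$ and to treat the drift together with the nonlocal operator at each scale, exactly as in the corresponding step of the proof of Theorem~\ref{thm 2}. A related subtlety of the measurable-in-$t$ setting is that one cannot differentiate the equation in $t$, so the smoothness of $v$ used in the freezing step is necessarily one-sided (spatial only), and is provided precisely by the $x$-translation invariance of the frozen kernel.
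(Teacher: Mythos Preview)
Your Campanato-style iteration is the right idea and is close in spirit to the paper's approach. There is, however, a gap in the freezing step. You write that the H\"older assumption \eqref{eq:modulusofcontinuity4} gives $\|(L-\tilde L)u\|_{L_\infty(Q_r)}\le Cr^\gamma$ and that the resulting constant $N$ in the recursive bound depends only on $\|u\|^x_\gamma,C_f,C_b,N_0$. In fact the bound one obtains is $|(L-\tilde L)u(t,x)|\le Cr^\gamma\cdot M(u)$, where $M(u)$ is essentially a $C^\sigma_x$-type quantity of $u$ (compare the estimate of $C_k$ in the proof of Proposition~\ref{prop:linear}, where the corresponding term is $\omega_a(2^{-k})$ times the series $\sum_j 2^{j(\sigma-\alpha)}\inf_p[u-p]^x_\alpha$ plus lower order). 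Your iteration therefore only delivers an estimate of the shape $[u]^x_{\sigma+\gamma;Q_{1/2}}\le C(\|u\|^x_\gamma+C_f)+C'M(u)$, and an absorption step is still required. The paper carries this out via the enlarging-domain iteration over $Q^l$ together with Lemma~\ref{lem2.1} (see \eqref{eq:thetrick}, \eqref{eq 3.14}, \eqref{eq 3.14-2}); in the classical second-order setting one would use interpolation plus a covering argument. Either way, this step is not automatic in the nonlocal setting because of the tails (note also that your boundary datum $u-P_k$ grows polynomially at infinity, which is why the paper truncates to $g_M$ in the case $\sigma\in(1,2)$), and it should be made explicit.

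For comparison, the paper's argument for Theorem~\ref{thm:linearschauder2} is structured differently: it first establishes the estimate for small $\gamma<\hat\alpha$ by inserting $\omega_\bullet(r)=r^\gamma$ into the Dini machinery of Proposition~\ref{prop:linear} (exactly as Proposition~\ref{prop:global1} does relative to Propositions~\ref{prop3.21}--\ref{prop3.23}), and then reaches arbitrary $\gamma\in(0,1)$ by taking spatial difference quotients of the linear equation and iterating, as in the proof of Theorem~\ref{thm:linearschauder}. Your direct single-scale iteration avoids this two-step structure, but must confront the absorption issue head-on.
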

Note that here we assume $\gamma\in(0,1)$ for all $\sigma\in(0,2)$, since all the estimates only involve $x$.  This theorem improves Theorem 1.1 in \cite{JX152} which does not include drifts and requires the H\"older continuity of $a$ and $f$ in the time variable $t$ as well. In the second-order case, similar results were obtained long time ago by Knerr \cite{Knerr} and Lieberman \cite{Lieb92}.

A few remarks are in order.

\begin{remark}
It is evident that all Theorems \ref{thm 1}, \ref{thm:schauder}, \ref{thm:linearschauder}, \ref{thm 2}, and \ref{thm:linearschauder2} hold for corresponding elliptic equations as well.
\end{remark}

\begin{remark}
Our proof does not tell whether the a priori estimates in Theorem \ref{thm 1} and Theorem \ref{thm 2} can be made uniformly bounded as $\sigma\to 2^-$, even if we replace $\Lambda$ by $(2-\sigma)\Lambda$ in both \eqref{eq:modulusofcontinuity2} and \eqref{eq:modulusofcontinuity3}.

\end{remark}


The ideas of our proofs are in the spirit of Campanato's approach first developed in \cite{Ca66}, which have been used in \cite{DZ162} for nonlocal fully nonlinear elliptic equations. Similar idea was also used in the literature to derive Cordes-Nirenberg type estimates, see e.g., \cite{Nirenberg}. Here, we adapt the methods in \cite{DZ162} from elliptic settings to parabolic settings, with extra efforts to deal with the drift term especially when $\sigma=1$ and some simplification of the proofs.

The key idea is that instead of estimating $C^\sigma$ semi-norm of the solution, we construct and bound certain semi-norms of the solution, see Lemma \ref{lem2.1}. When $\sigma<1$, we define such semi-norm as a series of lower-order H\"older semi-norms of $u$. In order for the nonlocal operator to be well defined, the solution needs to be smoother than $C^\sigma$. This motivates us to divide the integral domain into annuli, and use a lower-order semi-norm to estimate the integral in each annulus. 
The proof of the case when $\sigma\ge 1$ is more involved mainly due to the fact that the series of lower-order H\"older semi-norms of the solution itself is no longer sufficient to estimate the $C^\sigma$ norm. Therefore, we need to subtract a polynomial from the solution in the construction of the semi-norm. In some sense, the polynomial should be chosen to minimize the series.
It turns out that when $\sigma\ge 1$, we can make use of the first-order Taylor's expansion of the mollification of the solution.

The organization of this paper is as follows. In the next section, we introduce some notations and preliminary results that are necessary in the proof of our main theorems.  In Section \ref{sec:Dini}, we show the Dini estimates for nonlocal nonlinear parabolic equations in Theorem \ref{thm 1}.
In Section \ref{sec:schauder}, we are going to prove the Schauder estimates for equations with a drift in Theorems \ref{thm:schauder} and \ref{thm:linearschauder}. The last section is devoted to linear parabolic  equations with measurable coefficients in the time variable $t$, where Theorems \ref{thm 2} and \ref{thm:linearschauder2} are proved.
\medskip

\section*{Acknowledgments}
Part of this work was completed while H. Dong was visiting the Department of Mathematics at the Hong Kong University of Science and Technology under the HKUST-ICERM visiting fellow program, to which he is grateful   for providing  the very stimulating research environment and supports. T. Jin would like to thank Roman Shvydkoy for inspiring discussions. The authors thank the referees for their helpful comments and Luis Silvestre for pointing out an omission in the original manuscript.

\section{preliminary}
We will use the following notation:
\begin{itemize}
\item For $r>0$, $Q_r(t_0,x_0)=(t_0-r^\sigma,t_0]\times B_r(x_0)$ and $\widehat Q_r(t_0,x_0)=(t_0-r^\sigma,t_0+r^\sigma)\times B_r(x_0)$, where $B_r(x_0)\subset \bR^d$ is the ball of radius $r$ centered at $x_0$. We write $Q_r=Q_r(0,0)$ for brevity;

\item $\mathcal{P}_t$ (or $\mathcal{P}_x$) is the set of first-order polynomial in $t$ (or $x$), respectively;

\item $\mathcal{P}_1$ is the set of first-order polynomial in both $t$ and $x$.

\item $\widetilde{\mathcal{P}}$ is the set of functions in the form $a(t)+b\cdot x$, where $b$ is a constant and $a$ is a measurable function.

\item For $\alpha,\beta>0$,
\[
\begin{split}
[u]_{\alpha,\beta;Q_{r}(t_0,x_0)}&=[u]_{C^{\alpha,\beta}_{t,x}(Q_r(t_0,x_0))}.\\
[u]^x_{\beta;Q_{r}(t_0,x_0)}&=\sup_{t\in (t_0-r^\sigma,t_0)}[u(t,\cdot)]_{C^\beta(B_r(x_0))}.\\
[u]^t_{\alpha;Q_{r}(t_0,x_0)}&=\sup_{x\in B_r(x_0)}[u(\cdot,x)]_{C^\alpha((t_0-r^\sigma,t_0))}.
\end{split}
\]
If $\beta$ (or $\alpha$) is an integer, the above semi-norms mean the Lipschitz norm of $D^{|\beta|-1}$ (or $\partial_t^{|\alpha|-1}$).
If there is no subscript about the region where the norm is taken, then it means the whole domain where the function is defined (e.g., $\bR^d$ or $(-t_0,0]\times \bR^d$ for some $t_0>0$).

\item We say $u\in C^{1,\sigma^+}(Q_1)$ if $u\in C^{1,\sigma+\varepsilon}(Q_1)$ for some small  $\varepsilon>0$.

\item We will also use the following Lipschitz-Zygmund semi-norms. Let $\Omega\subset\bR^d$ be a domain, $r>0$, and $Q=(t_0-r,t_0]\times\Omega$. For $\alpha,\beta\in(0,2)$, we denote
\[
\begin{split}
[u]^x_{\mathbf{\Lambda}^\beta(Q)}&=\sup_{t\in (t_0-r^\sigma, t_0]}[u(t,\cdot)]_{\mathbf{\Lambda}^\beta(\Omega)}=\sup_{t\in (t_0-r^\sigma, t_0]}\sup_{\substack{x_1,x_2,x_3\in \Omega\\ x_1\not=x_3,\ x_1+x_3=2x_2 }} \frac{|u(t,x_1)+u(t,x_3)-2u(t,x_2)|}{|x_1-x_2|^\alpha},\\
[u]^t_{\mathbf{\Lambda}^\alpha(Q)}&=\sup_{x\in \Omega}[u(\cdot,x)]_{\mathbf{\Lambda}^\alpha((t_0-r^\sigma,t_0))}=\sup_{x\in \Omega}\sup_{\substack{t_1,t_2,t_3\in (t_0-r^\sigma,t_0]\\ t_1\not=t_3,\ t_1+t_3=2t_2 }} \frac{|u(t_1,x)+u(t_3,x)-2u(t_2,x)|}{|t_1-t_2|^\alpha},\\
[u]_{\mathbf{\Lambda}^{\alpha,\beta}(Q)}
&=\sup_{\substack{(t_1,x_1),(t_2,x_2),
(t_3,x_3)\in Q\\ (t_1,x_1)\not=(t_3,x_3),\ (t_1,x_1)+(t_3,x_3)=2(t_2,x_2) }} \frac{|u(t_1,x_1)+u(t_3,x_3)-2u(t_2,x_2)|}{|t_1-t_2|^\alpha+|x_1-x_2|^\beta}.
\end{split}
\]
\end{itemize}

We will  frequently use the following identities:
\begin{align}
	\nonumber
	&2^{j}(u(t,x+2^{-j}l)-u(t,x)) -(u(t,x+l)-u(t,x))\\
	\label{eq 2.1}
	&= \sum_{k=1}^j2^{k-1}(2u(t,x+2^{-k}l)-u(t,x+2^{-k+1}l)-u(t,x))
\end{align}
and
\begin{align}
	\nonumber
	&2^j(u(t-2^{-j},x)-u(t,x))-(u(t-1,x)-u(t,x))\\
	\label{eq 2.111}
	& = \sum_{k=1}^j2^{k-1}(2u(t-2^{-k},x)-u(t-2^{-k+1},x)-u(t,x)),
\end{align}
which hold for any  unit vector $l\in \bR^d$ and $j\in \mathbb{N}$.

\begin{lemma}
	\label{lem2.1}
Let $\alpha\in(0,\sigma)$ be a constant. Let $Q$ be a convex cylinder such that $Q_{1/2}\subset Q\subset Q_1$.

(i) When $\sigma\in(0,1)$, we have
\begin{align}
	\nonumber
&	[u]^x_{\sigma;Q}+\|\partial_t u\|_{L_\infty(Q)} \\
&\le C\sum_{k=0}^\infty 2^{k(\sigma-\alpha)}\sup_{(t_0,x_0)\in Q}\inf_{p\in \mathcal{P}_t}[u-p]_{\alpha/\sigma,\alpha;Q_{2^{-k}}(t_0,x_0)}
	\label{eq 2.2}
	+C\|u\|_{L_\infty(Q_{2^{1/\sigma}})},
\end{align}
where $C$ is a constant depending only on $d$, $\sigma$, and $\alpha$. Moreover, the modulus of continuity of $\partial_t u$ is bounded by the tail of the summation on the right-hand side of \eqref{eq 2.2}.

(ii) When $\sigma\in(1,2)$, we have
\begin{align}
	\nonumber
&[u]^x_{\sigma;Q}+\|\partial_t u\|_{L_\infty(Q)}+[Du]^t_{\frac{\sigma-1}{\sigma};Q}\\
&\le C\sum_{k=0}^\infty 2^{k(\sigma-\alpha)}\sup_{(t_0,x_0)\in Q}\inf_{p\in \mathcal{P}_1}[u-p]_{\alpha/\sigma,\alpha;Q_{2^{-k}}(t_0,x_0)}
	\label{eq 2.3a}
+ C\|u\|_{L_\infty(Q_2)},
\end{align}
where $C$ is a constant depending on $d$, $\alpha$, and $\sigma$. The modulus of continuity of $\partial_t u$ is bounded by the tail of the summation above.

(iii) When $\sigma = 1$,  we have
\begin{align}
	\nonumber
	\label{eq 2.242}
	\|Du\|_{L_\infty(Q)}+\|\partial_t u\|_{L_\infty(Q)}&\le C\sum_{k=0}^\infty 2^{k(1-\alpha)}\sup_{(t_0,x_0)\in Q}\inf_{p\in \mathcal{P}_1}[u-p]_{\alpha,\alpha;Q_{2^{-k}}(t_0,x_0)}\\
&\quad	+C\sup_{\substack{(t,x),(t',x')\in Q_2\\\max\{|t-t'|,|x-x'|\}=1}}|u(t,x)-u(t',x')|,
\end{align}
where $C$ is a constant depending on $d$, $\alpha$, and $\sigma$. The modulus of continuity of $\partial_t u$ and $Du$ are bounded by the tail of the summation above.
\end{lemma}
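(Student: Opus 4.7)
The proofs of all three parts rest on a common Campanato-type iteration. Set $F_k:=\sup_{(t_0,x_0)\in Q}\inf_{p}[u-p]_{\alpha/\sigma,\alpha;Q_{2^{-k}}(t_0,x_0)}$, the $k$-th factor of the claimed series. For each base point $(t_0,x_0)\in Q$ and each $k\ge 0$, choose a polynomial $p_k=p_k^{(t_0,x_0)}$ (in $\mathcal{P}_t$ for (i), in $\mathcal{P}_1$ for (ii)--(iii)) with $[u-p_k]_{\alpha/\sigma,\alpha;Q_{2^{-k}}(t_0,x_0)}\le 2F_k$. Since these polynomial classes are closed under constant shifts, we may normalize $p_k(t_0,x_0)=u(t_0,x_0)$, and then the semi-norm bound upgrades to the pointwise estimate $|u-p_k|\le CF_k 2^{-k\alpha}$ on $Q_{2^{-k}}(t_0,x_0)$.

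Writing $p_k(t,x)=u(t_0,x_0)+b_k(t-t_0)+c_k\cdot(x-x_0)$ (with $c_k=0$ in part (i)), the triangle inequality on $Q_{2^{-(k+1)}}(t_0,x_0)$ yields $|p_{k+1}-p_k|\le C(F_k+F_{k+1})2^{-k\alpha}$ there; evaluating at the corner points $(t_0-2^{-(k+1)\sigma},x_0)$ and $(t_0,x_0\pm 2^{-(k+1)}e_i)$ gives $|b_{k+1}-b_k|\le C(F_k+F_{k+1})2^{k(\sigma-\alpha)}$ and, for (ii)--(iii), $|c_{k+1}-c_k|\le C(F_k+F_{k+1})2^{k(1-\alpha)}$. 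Both sequences are Cauchy (their differences are bounded by the $k$-th term of the main series), so $b_k\to b_\infty$ and $c_k\to c_\infty$ at each base point. Dividing the pointwise bound $|u(t,x)-u(t_0,x_0)-b_k(t-t_0)-c_k\cdot(x-x_0)|\le CF_k 2^{-k\alpha}$ by $|t-t_0|+|x-x_0|$ and sending $k\to\infty$ identifies $b_\infty=\partial_tu(t_0,x_0)$ and $c_\infty=Du(t_0,x_0)$. Bounds on the initial coefficients $b_0,c_0$ follow by evaluating $p_0$ at corners of the unit-scale cylinder: $|b_0|,|c_0|\le C(F_0+\|u\|_{L_\infty})$ for (i)--(ii), or by $CF_0$ plus the unit-scale oscillation of $u$ on $Q_2$ for (iii). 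Combining these with the telescoping sums yields the stated bounds on $\|\partial_t u\|_{L_\infty}$ and, in (iii), on $\|Du\|_{L_\infty}$; the modulus of continuity of $\partial_t u$ (and of $Du$ in (iii)) is controlled by the tail $\sum_{k\ge k_0}F_k 2^{k(\sigma-\alpha)}$.

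For the spatial semi-norm in (i), given $(t_0,x_0),(t_0,x_1)$ with $r=|x_0-x_1|$ and $k_0$ satisfying $2^{-k_0-1}<r\le 2^{-k_0}$, the fact that $p_{k_0}\in\mathcal{P}_t$ is independent of $x$ gives $|u(t_0,x_1)-u(t_0,x_0)|\le CF_{k_0}r^\alpha$, so dividing by $r^\sigma$ yields $CF_{k_0}2^{k_0(\sigma-\alpha)}\le C\sum_k F_k 2^{k(\sigma-\alpha)}$. For $[u]^x_\sigma=[Du]^x_{\sigma-1}$ in (ii), compare $c_{k_0-1}^{(t_0,x_0)}$ and $c_{k_0-1}^{(t_0,x_1)}$ by observing that both polynomials $p_{k_0-1}^{(t_0,x_0)},p_{k_0-1}^{(t_0,x_1)}$ approximate $u$ on the non-empty overlap of their cylinders at scale $2^{-k_0+1}$; their difference lies in $\mathcal{P}_1$ and the $L_\infty$ bound $CF_{k_0-1}2^{-(k_0-1)\alpha}$ on the overlap forces its gradient coefficient to satisfy $|c_{k_0-1}^{(t_0,x_0)}-c_{k_0-1}^{(t_0,x_1)}|\le CF_{k_0-1}2^{k_0(1-\alpha)}$. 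Combining with the tail bounds $|Du(t_0,x_i)-c_{k_0-1}^{(t_0,x_i)}|\le C\sum_{k\ge k_0-1}F_k 2^{k(1-\alpha)}$ and dividing by $r^{\sigma-1}$ gives the desired estimate; $[Du]^t_{(\sigma-1)/\sigma}$ is analogous by comparing at $(t_0,x_0)$ and $(t_1,x_0)$.

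The main obstacle is this cross-base-point comparison in (ii): one must choose the auxiliary scale so the two cylinders overlap substantially (containing both base points and with volume comparable to $2^{-k_0(\sigma+d)}$), ensuring that $L_\infty$ control of a $\mathcal{P}_1$ polynomial on the overlap controls each of its coefficients with the correct powers of $2^{-k_0}$. A secondary technicality is the borderline case $\sigma=1$ in (iii): there $|c_{k+1}-c_k|\le CF_k 2^{k(1-\alpha)}$ is still summable, but the identities \eqref{eq 2.1} and \eqref{eq 2.111} can be used to rewrite telescoped first-difference quotients as sums of second differences, which streamlines the modulus-of-continuity argument for $Du$ when the standard Hölder quotients are unavailable.
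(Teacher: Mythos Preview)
Your proof is correct but follows a genuinely different route from the paper's. You run a classical Campanato iteration: select near-optimal polynomials $p_k$ at each dyadic scale, show their coefficients form Cauchy sequences converging to $\partial_t u$ and $Du$, and handle the H\"older semi-norms by comparing the polynomials at neighbouring base points on overlapping cylinders. The paper instead works directly with the second-difference identities \eqref{eq 2.1} and \eqref{eq 2.111}: it rewrites $2^j(u(t-2^{-j},x)-u(t,x))$ and $2^j(u(t,x+2^{-j}\ell)-u(t,x))$ as telescoping sums of second differences, bounds those by Zygmund semi-norms $[u]_{\mathbf\Lambda^{\alpha/\sigma,\alpha}}$ (which are invariant under subtraction of $\cP_1$-polynomials), and sends $j\to\infty$. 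For $[u]^x_\sigma$ with $\sigma\in(1,2)$ the paper uses an absorption trick: it introduces auxiliary points $\bar x,\bar x'$ and a small parameter $\varepsilon$ so that the mean-value error $I_1+I_2\le 2^\sigma\varepsilon^{\sigma-1}[u]^x_{\sigma;Q}$ is absorbed into the left-hand side, reducing the problem to a Zygmund second difference $I_3$. Finally, the paper obtains $[Du]^t_{(\sigma-1)/\sigma}$ not by a direct argument but by citing an interpolation inequality from Krylov's book, bounding it by $\|\partial_t u\|_{L_\infty}+[u]^x_\sigma$.

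Your approach is more uniform across the three cases and makes the tail-controls-modulus claims transparent, at the cost of the cross-base-point overlap argument you correctly flag as the main technical point. The paper's approach is more hands-on via explicit identities and avoids the overlap bookkeeping, but relies on an absorption step for case (ii) and an external interpolation citation for $[Du]^t$.
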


\begin{proof}
We first prove the estimate of $\partial_t u$ for $\sigma\in(0,2)$  by showing that
\begin{align}
	\nonumber
	\label{eq 2.241}
\|\partial_t u\|_{L_\infty(Q)}
&\le C\sum_{k = 0}^\infty2^{k(\sigma-\alpha)}\sup_{(t_0,x_0)\in Q}\inf_{p\in \mathcal{P}_t}[u-p]^t_{\alpha/\sigma;Q_{2^{-k}}(t_0,x_0)}\\
&\quad +2\sup_{(t_0,x_0)\in Q}|u(t_0-1,x_0)-u(t_0,x_0)|.
\end{align}
Indeed, from \eqref{eq 2.111},
\begin{align}
	\nonumber
	&2^j|u(t-2^{-j},x)-u(t,x)|\\
	\nonumber
	&\le |u(t-1,x)-u(t,x)| + \sum_{k=1}^\infty2^{k-1}|2u(t-2^{-k},x)-u(t-2^{-k+1},x)-u(t,x)|\\
	\label{eq 1.181}
	&\le|u(t-1,x)-u(t,x)|+C\sum_{k=1}^\infty2^{k(1-\alpha/\sigma)}
[u]^t_{\mathbf{\Lambda}^{\alpha/\sigma}(Q_{2^{-k^\ast}}(t,x))},
\end{align}
where $C$ only depends on $\sigma$ and
$k^\ast = [\frac{k-1}{\sigma}]$,
i.e., the largest integer which is smaller than $(k-1)/\sigma$. The right-hand side of the above inequality  is less than
\begin{align*}
	&|u(t-1,x)-u(t,x)|+C\sum_{k=1}^\infty2^{(k^\ast\sigma+\sigma)(1-\alpha/\sigma)}
[u]^t_{\mathbf{\Lambda}^{\alpha/\sigma}(Q_{2^{-k^\ast}}(t,x))}\\
	&\le |u(t-1,x)-u(t,x)|+C\sum_{k=1}^\infty2^{k^\ast(\sigma-\alpha)}\inf_{p\in \mathcal{P}_t}[u-p]^t_{\mathbf{\Lambda}^{\alpha/\sigma}(Q_{2^{-k^\ast}}(t,x))}.
\end{align*}
By using the definition of $k^\ast$, it is easy to see the second term on the right-hand side of the above inequality  is bounded by
\begin{align*}
C\sum_{k = 0}^\infty2^{k(\sigma-\alpha)}\sup_{(t_0,x_0)\in Q}\inf_{p\in \mathcal{P}_t}[u-p]^t_{\alpha/\sigma;Q_{2^{-k}}(t_0,x_0)}.
\end{align*}
Therefore, by sending $j\rightarrow \infty$ in  \eqref{eq 1.181}, we prove that $\|\partial_t u\|_{L_\infty(Q)}$ is bounded by the right-hand side of \eqref{eq 2.241}.
Since
\begin{align*}
&\inf_{p\in \mathcal{P}_t}[u-p]^t_{\alpha/\sigma;Q_{2^{-k}}(t_0,x_0)} \le \inf_{p\in \mathcal{P}_t}[u-p]_{\alpha/\sigma,\alpha;Q_{2^{-k}}(t_0,x_0)},\\
&\inf_{p\in \mathcal{P}_t}[u-p]^t_{\alpha/\sigma;Q_{2^{-k}}(t_0,x_0)} \le \inf_{p\in \mathcal{P}_1}[u-p]_{\alpha/\sigma,\alpha;Q_{2^{-k}}(t_0,x_0)},
\end{align*}
the right-hand side of \eqref{eq 2.241} is bounded by that of \eqref{eq 2.2}, \eqref{eq 2.3a}, and \eqref{eq 2.242}. We obtain the bound of $\|\partial_t u\|_{L_\infty(Q)}$.

Next,  we bound the modulus of continuity of $\partial_t u$ in $Q$. Assume that
$$|t-t'| + |x-x'|^\sigma\in [2^{-(i+1)},2^{-i})\quad\mbox{for some }i\ge 1.$$
From \eqref{eq 2.111}, for any $j\ge i+1$,
\begin{align*}
	&2^j(u(t-2^{-j},x)-u(t,x))-2^i(u(t-2^{-i},x)-u(t,x))\\
	&= \sum_{k = i+1}^j2^{k-1}(2u(t-2^{-k},x)-u(t-2^{-k+1},x)-u(t,x)),
\end{align*}
and the same identity holds with $(t',x')$ in place of $(t,x)$. Then we have
\begin{align*}
	&|\partial_t u(t,x)-\partial_t u(t',x')| \\
	&= \lim_{j\rightarrow \infty}|2^j(u(t-2^{-j},x)-u(t,x))-2^j(u(t'-2^{-j},x')-u(t',x'))|\\
	&\le |2^i(u(t-2^{-i},x)-u(t,x))-2^i(u(t'-2^{-i},x')-u(t',x'))|\\
	&\quad+ C\sum_{k=i+1}^\infty\sup_{(t_0,x_0)\in Q}2^{k(1-\alpha/\sigma)}
[u]^t_{\mathbf{\Lambda}^{\alpha/\sigma}(Q_{2^{-k^\ast}}(t_0,x_0))},
\end{align*}
where $k^\ast$ is defined above. By the triangle inequality,  the first term on the right-hand side is bounded by
\begin{align*}
2^i|u(t-2^{-i},x)+u(t',x')-2u(\bar{t},\bar{x})|
+2^i|u(t'-2^{-i},x')-2u(\bar{t},\bar{x})+u(t,x)|,
\end{align*}
where $\bar{t} = (t+t'-2^{-i})/2$ and $\bar{x} = (x+x')/2$. This is further bounded by
\begin{equation*}
2^{i(1-\alpha/\sigma)}\sup_{(t_0,x_0)\in Q}[u]_{\mathbf{\Lambda}^{\alpha/\sigma,\alpha}(Q_{2^{-i^\ast}}(t_0,x_0))},
\end{equation*}
where $i^\ast = [\frac{i-1}{\sigma}]$.
Therefore,
\begin{align*}
	&|\partial_t u(t,x)-\partial_t u(t',x')|\\
	&\le C\sum_{k = i}^\infty\sup_{(t_0,x_0)\in Q}2^{k(1-\alpha/\sigma)}[u]_{\mathbf{\Lambda}^{\alpha/\sigma,\alpha}
(Q_{2^{-i^\ast}}(t_0,x_0))}\\
	&\le C\sum_{k = i}^\infty\sup_{(t_0,x_0)\in Q}2^{k(1-\alpha/\sigma)}\inf_{p\in \mathcal{P}_1}[u-p]_{\alpha/\sigma,\alpha;Q_{2^{-i^*}}(t_0,x_0)},
\end{align*}
which, from the definition of $i^\ast$,  converges to  $0$ as $i\rightarrow \infty$.

In the rest of the proof, we consider the three cases separately.

{\em Case 1. $\sigma\in (0,1)$}.
The estimates of $[u]_\sigma^x$ are the same as \cite[Lemma 2.1]{DZ162} and we only provide a sketch here.
 Let $(t,x),(t,x')\in Q$ be  two different points. Suppose that $h:=|x-x'|\in (0,1)$. Since
\begin{equation*}
h^{-\sigma}|u(t,x')-u(t,x)|\le \sup_{x\in Q} h^{\alpha-\sigma}[u(t,\cdot)]_{\alpha;B_h(x)},
\end{equation*}
by taking the supremum with respect to $t,x$, and $x'$ for $h<1$ on both sides, we get
\begin{align*}
[u]^x_{\sigma;Q} &\le \sup_{(t_0,x_0)\in Q}\sup_{0<h<1} h^{\alpha-\sigma}[u]^x_{\alpha;Q_h(t_0,x_0)}\\
&\le C\sum_{k=0}^\infty 2^{k(\sigma-\alpha)}\sup_{(t_0,x_0)\in Q}[u]^x_{\alpha;Q_{2^{-k}}(t_0,x_0)}
\end{align*}
Notice that
\begin{align*}
[u]^x_{\alpha;Q_{2^{-k}}(t_0,x_0)} = \inf_{p\in \mathcal{P}_t}[u-p]^x_{\alpha;Q_{2^{-k}}(t_0,x_0)}\le \inf_{p\in \mathcal{P}_t}[u-p]_{\alpha/\sigma,\alpha;Q_{2^{-k}}(t_0,x_0)}.
\end{align*}
The proof of Case 1 is completed.

{\em Case 2. $\sigma\in(1,2)$.}
Similar to the previous case, we only provide the sketch of the proof following  that of  \cite[Lemma 2.1]{DZ162}. Let $\ell\in\bR^d$ be a unit vector and $\varepsilon\in (0,1/16)$ be a small constant to be specified later. For any two distinct points $(t,x),(t,x')\in Q$ such that $h=|x-x'|<1/2$, there exist $\bar x, \bar x'\in Q$ such that  $|x-\bar x|<\varepsilon h$, $\bar x+\varepsilon hl\in Q$, and  $|x'-\bar x'|<\varepsilon h$, $\bar x'+\varepsilon hl\in Q$. By the triangle inequality,
\begin{equation}
                        \label{eq8.43}
h^{1-\sigma}|D_\ell u(t,x)-D_\ell u(t,x')|
\le I_1+I_2+I_3,
\end{equation}
where
\begin{align*}
I_1&:=h^{1-\sigma}|D_\ell u(t,x)-(\varepsilon h)^{-1}(u(t,\bar x+\varepsilon h\ell)-u(t,\bar x))|,\\
I_2&:=h^{1-\sigma}|D_\ell u(t,x')-(\varepsilon h)^{-1}(u(t,\bar x'+\varepsilon h\ell)-u(t,\bar x'))|,\\
I_3&:=h^{1-\sigma}(\varepsilon h)^{-1}|(u(t,\bar x+\varepsilon h\ell)-u(t,\bar x))-(u(t,\bar x'+\varepsilon h\ell)-u(t,\bar x'))|.
\end{align*}
By the mean value theorem,
\begin{equation}
                                    \label{eq3.02b}
I_1+I_2\le 2^\sigma\varepsilon^{\sigma-1} [u]^x_{\sigma;Q}.
\end{equation}
Now we choose and fix an $\varepsilon$ sufficiently small depending only on $\sigma$ such that $2^\sigma\varepsilon^{\sigma-1}\le 1/2$.
Using the triangle inequality, we have
\begin{align*}
I_3 &\le Ch^{-\sigma}\big(|u(t,\bar x+\varepsilon h\ell)+u(t,\bar x')-2u(t,\tilde x)|\\
&\quad+|u(t,\bar x'+\varepsilon h\ell)+u(t,\bar x)-2u(t,\tilde x)|\big),
\end{align*}
where $\tilde x=(\bar x+\varepsilon h\ell+\bar x')/2$.
Thus,
\begin{equation}
                                        \label{eq3.03b}
I_3\le Ch^{\alpha-\sigma}[u(t,\cdot)]^x_{\mathbf{\Lambda}^\alpha(Q_h(t,\tilde x))}.
\end{equation}
Combining \eqref{eq8.43}, \eqref{eq3.02b}, and \eqref{eq3.03b}, we get
\begin{align*}
[u]^x_{\sigma;Q} \le C\sum_{k=0}^\infty 2^{k(\sigma-\alpha)}\sup_{(t_0,x_0)\in Q}\inf_{p\in \mathcal{P}_x}[u-p]^x_{\alpha;Q_{2^{-k}}(t_0,x_0)}.
\end{align*}
Because
\begin{align*}
\inf_{p\in \mathcal{P}_x}[u-p]^x_{\alpha;Q_{2^{-k}}(t_0,x_0)}\le \inf_{p\in \mathcal{P}_1}[u-p]_{\alpha/\sigma,\alpha;Q_{2^{-k}}(t_0,x_0)},
\end{align*}
we bound $[u]^x_{\sigma;Q}$ by the right-hand side of \eqref{eq 2.3a}.

It follows from \cite[Section 3.3]{Krylov97} that $[Du]^t_{\frac{\sigma-1}{\sigma};Q}$ is bounded by $\|\partial_t u\|_{L_\infty(Q)}+[u]^x_{\sigma;Q}$. Therefore, \eqref{eq 2.3a} is proved.

{\em Case 3. $\sigma=1$.}
We give the estimate of $\|Du\|_{L_\infty}$. It follows from \eqref{eq 2.1} that
\begin{align*}
&2^j\big|u(t,x+2^{-j}\ell)-u(t,x)\big|\\
&\le |u(t,x+\ell)-u(t,x)|+\sum_{k=1}^j 2^{k(1-\alpha)}[u(t,\cdot)]^x_{\mathbf{\Lambda}^\alpha(B_{2^{-k}}(x+2^{-k}\ell))}.
\end{align*}
Taking $j\to \infty$, we obtain that
\begin{align*}
\|Du\|_{L_\infty}
&\le C\sum_{k=1}^\infty2^{k(1-\alpha)}\sup_{(t_0,x_0)\in Q}\inf_{p\in \mathcal{P}_x}[u-p]^x_{\alpha;Q_{2^{-k}}(t_0,x_0)}\\
&\quad +\sup_{\substack{(t,x),(t,x') \in Q_2\\|x-x'|=1}}|u(t,x)-u(t,x')|.
\end{align*}
The estimate of the continuity of $Du$ is the same as $\partial_t u$, and thus omitted.
\end{proof}

Let $\eta$ be a smooth nonnegative function in $\bR$ with unit integral and vanishing outside $(0,1)$. For $R>0$ and $\sigma \in(0, 1)$, we define the mollification of $u$ with respect to $t$ as
$$
u^{(R)}(t,x) = \int_{\bR}u(t-R^\sigma s,x)\eta(s)\,ds.
$$
For the case $\sigma \in[1,2)$, we define $u^{(R)}$ differently by mollifying the $x$ variable as well. Let $\zeta\in C_0^\infty(B_1)$ be a radial nonnegative function with unit integral. For $R>0$, we define
\begin{align*}
u^{(R)}(t,x) = \int_{\bR^{d+1}}u(t-R^\sigma s,x-Ry)\eta(s)\zeta(y)\,dy\,ds.
\end{align*}
The following lemma is for the case $\sigma \in (0,1)$.

\begin{lemma}
	\label{lem2.4}
Let $\sigma\in (0,1)$, $\alpha\in (0,\sigma)$,  and $R>0$ be constants. Let $p_0= p_0(t)$ be the first-order Taylor expansion of $u^{(R)}$ at the origin in $t$ and $\tilde{u} = u-p_0$. Then for any integer $j\ge 0$, we have
\begin{align}
                            \label{eq11.28}
[\tilde u]_{\alpha/\sigma,\alpha;(-R^\sigma,0)\times B_{2^j R}}
\le C\inf_{p\in \mathcal{P}_t}[u-p]_{\alpha/\sigma,\alpha;(-R^\sigma,0)\times B_{2^jR}},
 \end{align}
where $C$ is a constant only depending on $d$ and $\alpha$.
\end{lemma}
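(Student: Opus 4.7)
The plan is to show that $p_0$ is, up to a constant factor, an optimal choice of polynomial in $\mathcal{P}_t$ for the seminorm on the right-hand side of \eqref{eq11.28}. Let $p^*\in\mathcal{P}_t$ be arbitrary, write $p^*(t)=a^*+b^*t$, and set $v:=u-p^*$. A direct computation shows that
\[
(p^*)^{(R)}(t)=\int\bigl(a^*+b^*(t-R^\sigma s)\bigr)\eta(s)\,ds=p^*(t)-b^*R^\sigma\bar s,\qquad \bar s:=\int s\,\eta(s)\,ds,
\]
so $(p^*)^{(R)}$ is itself a first-order polynomial in $t$ and therefore equals its own first-order Taylor expansion at the origin. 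Consequently, under the splitting $u^{(R)}=v^{(R)}+(p^*)^{(R)}$ the first-order Taylor polynomials at the origin decompose as $p_0=q_0+(p^*)^{(R)}$, where
\[
q_0(t):=v^{(R)}(0,0)+t\,\partial_t v^{(R)}(0,0).
\]
This gives $\tilde u=u-p_0=v-q_0+b^*R^\sigma\bar s$, and since additive constants drop out of the seminorm, it suffices to prove
\[
[v-q_0]_{\alpha/\sigma,\alpha;(-R^\sigma,0)\times B_{2^jR}}\le C[v]_{\alpha/\sigma,\alpha;(-R^\sigma,0)\times B_{2^jR}}.
\]

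By the triangle inequality the task reduces to bounding $[q_0]_{\alpha/\sigma,\alpha}$ on the same cylinder. Since $q_0$ is affine in $t$ and independent of $x$, only its slope $\partial_t v^{(R)}(0,0)$ contributes; using $|t-t'|\le R^\sigma$ on the cylinder to trade a factor $|t-t'|^{1-\alpha/\sigma}$ for $R^{\sigma-\alpha}$ yields
\[
[q_0]_{\alpha/\sigma,\alpha;(-R^\sigma,0)\times B_{2^jR}}\le C R^{\sigma-\alpha}\,\bigl|\partial_t v^{(R)}(0,0)\bigr|.
\]
To estimate $|\partial_t v^{(R)}(0,0)|$, I would move the $t$-derivative onto the mollifier via the change of variables $\tau=t-R^\sigma s$, obtaining
\[
\partial_t v^{(R)}(0,0)=R^{-\sigma}\int v(-R^\sigma s,0)\,\eta'(s)\,ds.
\]
Because $\operatorname{supp}\eta\subset(0,1)$, the integrand only uses $v$ at points in $(-R^\sigma,0)\times\{0\}\subset(-R^\sigma,0)\times B_{2^jR}$. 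The key trick is then that $\int\eta'(s)\,ds=0$ by compact support, which allows subtracting the constant $v(0,0)$ inside the integral and estimating via the H\"older regularity of $v$ in time along $\{x=0\}$:
\[
|\partial_t v^{(R)}(0,0)|\le R^{-\sigma}\int_0^1|v(-R^\sigma s,0)-v(0,0)|\,|\eta'(s)|\,ds\le C R^{\alpha-\sigma}[v]_{\alpha/\sigma,\alpha;(-R^\sigma,0)\times B_{2^jR}}.
\]

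Combining the two bounds yields $[q_0]_{\alpha/\sigma,\alpha}\le C[v]_{\alpha/\sigma,\alpha}$ on the cylinder, hence $[\tilde u]_{\alpha/\sigma,\alpha}\le C[u-p^*]_{\alpha/\sigma,\alpha}$; taking the infimum over $p^*\in\mathcal{P}_t$ gives \eqref{eq11.28}. The one step that genuinely calls for care is the opening algebraic reduction: one must verify that the Taylor polynomial of $u^{(R)}$ decomposes cleanly along the splitting $u=v+p^*$ so that the dependence on $p^*$ collapses into a harmless additive constant. This rests precisely on the observation that the mollification operator preserves the class $\mathcal{P}_t$; once that is in place, the remainder of the argument is a short mean-zero moment computation for $\eta'$.
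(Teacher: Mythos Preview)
Your proof is correct and follows essentially the same approach as the paper's: both arguments exploit that $\tilde u$ is invariant (up to an additive constant) under replacing $u$ by $u-p$ for $p\in\mathcal P_t$, reducing the lemma to the estimate $[p_0]^t_{\alpha/\sigma;(-R^\sigma,0)}\le CR^{\sigma-\alpha}|\partial_t u^{(R)}(0,0)|\le C[u]^t_{\alpha/\sigma;Q_R}$. The paper states the invariance and the derivative bound as observations without spelling out the mollifier computation, whereas you make the decomposition $p_0=q_0+(p^*)^{(R)}$ and the mean-zero argument for $\eta'$ explicit; the logic is the same.
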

\begin{proof}
It is easily seen that $\tilde u$ is invariant up to a constant if we replace $u$ by $u-p$ for any $p\in \cP_t$. Thus to prove the lemma, we only need to bound the left-hand side of \eqref{eq11.28} by

$$
C[u]_{\alpha/\sigma,\alpha;(-R^\sigma,0)\times B_{2^jR}}.
$$
Since $\tilde u=u-p(t)$, it suffices to observe that
\begin{equation*}
[p]^t_{\alpha/\sigma;(-R^\sigma,0)}=R^{\sigma-\alpha}|\partial_t u^{(R)}(0,0)|
\le C[u]^t_{\alpha/\sigma;Q_{R}}.
\end{equation*}
The lemma is proved.
\end{proof}

The following lemma is useful in dealing with the case $\sigma\in (1,2)$.
\begin{lemma}
	\label{lem2.2}
Let $\alpha\in (0,1)$ and $\sigma\in(1,2)$ be constant. Then for any $u\in C^1$ and any cylinder $Q$, we have
\begin{align}
	\nonumber
	&\sum_{k=0}^\infty 2^{k(\sigma-\alpha)}\sup_{(t_0,x_0) \in Q}[u-p_0]^x_{\alpha;Q_{2^{-k}}(t_0,x_0)}\\
	\label{eq 2.9}
	&\le C\sum_{k=0}^\infty2^{k(\sigma-\alpha)}\sup_{(t_0,x_0)\in Q}\inf_{p\in \mathcal{P}_x}[u-p]^x_{\alpha;Q_{2^{-k}}(t_0,x_0)},
\end{align}
where $p_0$ is the first-order Taylor's expansion of $u$ in the $x$ variable at $(t_0,x_0)$,  and $C>0$ is a constant depending only on $d$, $\alpha$, and $\sigma$.
\end{lemma}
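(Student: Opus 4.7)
The plan is a Campanato-style telescoping argument across dyadic scales. Fix $(t_0, x_0) \in Q$, let $\omega_k = \omega_k(t_0, x_0) := \inf_{p \in \mathcal{P}_x}[u-p]^x_{\alpha; Q_{2^{-k}}(t_0, x_0)}$ (which is monotone nonincreasing in $k$ since $Q_{2^{-k-1}} \subset Q_{2^{-k}}$), and choose near-minimizers $q_k \in \mathcal{P}_x$ with $[u - q_k]^x_{\alpha; Q_{2^{-k}}(t_0, x_0)} \le 2\omega_k$. The key elementary computation is that any affine function $q(x) = a + b \cdot x$ satisfies $[q]^x_{\alpha; Q_r(t_0, x_0)} = |b|(2r)^{1-\alpha}$. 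Applied to $q_k - q_{k+1}$ on $Q_{2^{-k-1}}(t_0, x_0) \subset Q_{2^{-k}}(t_0, x_0)$ together with the triangle inequality, this yields
\[
|\nabla q_k - \nabla q_{k+1}| \cdot 2^{-k(1-\alpha)} \le [u-q_k]^x_{\alpha; Q_{2^{-k-1}}(t_0,x_0)} + [u-q_{k+1}]^x_{\alpha; Q_{2^{-k-1}}(t_0,x_0)} \le 4\omega_k,
\]
so $|\nabla q_k - \nabla q_{k+1}| \le 4\cdot 2^{k(1-\alpha)}\omega_k$.

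It suffices to assume the right-hand side of \eqref{eq 2.9} is finite. Since $\sigma > 1 > \alpha$, this forces $\sum_{j \ge 0} 2^{j(1-\alpha)} \omega_j < \infty$, so $\{\nabla q_k\}$ is Cauchy in $\bR^d$ and converges to some $v$. The identification $v = Du(t_0, x_0)$ is the heart of the argument and is where the $C^1$ assumption enters: the bound $|(u(t_0, \cdot) - q_k)(x) - (u(t_0, \cdot) - q_k)(x_0)| \le 2\omega_k|x - x_0|^\alpha$ on $B_{2^{-k}}(x_0)$, combined with the Taylor remainder $u(t_0, x) - u(t_0, x_0) - Du(t_0, x_0)\cdot(x-x_0) = o(|x-x_0|)$ evaluated at $|x-x_0| = 2^{-k}$ along the direction of $\nabla q_k - Du(t_0, x_0)$, yields $|\nabla q_k - Du(t_0, x_0)| \le 2 \cdot 2^{k(1-\alpha)}\omega_k + o(1) \to 0$. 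Telescoping then gives $|\nabla q_k - Du(t_0, x_0)| \le 4\sum_{j \ge k} 2^{j(1-\alpha)}\omega_j$.

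With this in hand, the triangle inequality and the affine identity applied to $q_k - p_0$ produce
\[
[u - p_0]^x_{\alpha; Q_{2^{-k}}(t_0, x_0)} \le 2\omega_k + C\cdot 2^{-k(1-\alpha)}\sum_{j \ge k} 2^{j(1-\alpha)}\omega_j.
\]
Taking the supremum over $(t_0, x_0) \in Q$ (which passes inside the nonnegative series), multiplying by $2^{k(\sigma-\alpha)}$, and summing over $k$, the cross-term becomes
\[
\sum_{k \ge 0} 2^{k(\sigma-1)} \sum_{j \ge k} 2^{j(1-\alpha)} \Omega_j = \sum_{j \ge 0} 2^{j(1-\alpha)}\Omega_j \sum_{k=0}^j 2^{k(\sigma-1)} \le C \sum_{j \ge 0} 2^{j(\sigma-\alpha)}\Omega_j,
\]
where $\Omega_j := \sup_{(t_0,x_0)\in Q}\omega_j(t_0, x_0)$ and $\sigma > 1$ was used so that $\sum_{k=0}^j 2^{k(\sigma-1)} \le C 2^{j(\sigma-1)}$. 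This matches the right-hand side of \eqref{eq 2.9}. The main obstacle is the identification $v = Du(t_0, x_0)$, which critically uses $u \in C^1$ to pin down the limit of the near-minimizers' gradients as the actual spatial gradient; the rest is routine telescoping and a Fubini-type order-swap.
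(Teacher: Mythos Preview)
Your proof is correct and follows essentially the same Campanato telescoping argument as the paper: choose near-minimizers $q_k$ at each dyadic scale, bound $|\nabla q_k - \nabla q_{k+1}|$ via the affine identity and triangle inequality, identify the limit as $Du(t_0,x_0)$, and finish with the Fubini-type swap using $\sigma>1$. The only notable difference is in the identification step: the paper instead shows $\|u(t_0,\cdot)-u(t_0,x_0)-q\cdot(x-x_0)\|_{L_\infty(B_{2^{-k}}(x_0))}\le C2^{-k\sigma}$ directly from the telescoped bound, which (since $\sigma>1$) forces $q=\nabla u(t_0,x_0)$ by the definition of differentiability, whereas you invoke the $C^1$ hypothesis through the Taylor remainder; both are valid under the stated assumptions.
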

\begin{proof}
Denote
$$b_k:= 2^{k(\sigma-\alpha)}\sup_{(t_0,x_0)\in Q}\inf_{p\in \mathcal{P}_x}[u-p]^x_{\alpha;Q_{2^{-k}}(t_0,x_0)}.$$
Then for any $(t_0,x_0)\in Q$ and each $k=0,1,\ldots$, there exists $\fp_k\in \mathcal{P}_x$ such that
\begin{equation*}
[u-\fp_k]^x_{\alpha;Q_{2^{-k}}(t_0,x_0)}\le 2b_k2^{-k(\sigma-\alpha)}.
\end{equation*}
By the triangle inequality, for $k\ge 1$  we have
\begin{equation}
	\label{eq 2.10}
	[\fp_{k-1}-\fp_k]^x_{\alpha;Q_{2^{-k}}(t_0,x_0)}\le 2b_k2^{-k(\sigma-\alpha)}+2b_{k-1}2^{-(k-1)(\sigma-\alpha)}.
\end{equation}
It is easily seen that
\begin{equation*}
[\fp_{k-1}-\fp_{k}]^x_{\alpha;Q_{2^{-k}}(t_0,x_0)} = |\nabla \fp_{k-1}-\nabla \fp_{k}|2^{-(k-1)(1-\alpha)},
\end{equation*}
which together with \eqref{eq 2.10} implies that
\begin{equation}
	\label{eq 2.11}
	|\nabla \fp_{k-1}-\nabla \fp_k|\le C2^{-k(\sigma-1)}(b_{k-1}+b_{k}).
\end{equation}
Since $\sum_{k}b_k<\infty$, from \eqref{eq 2.11} we see that $\{\nabla \fp_k\}$ is a Cauchy sequence in $\bR^{d}$. Let $q = q(t_0,x_0)\in \bR^d$ be the limit, which clearly satisfies for each $k\ge 0$,
\begin{equation}
	\nonumber
	|q-\nabla \fp_k|\le C\sum_{j=k}^\infty 2^{-j(\sigma-1)}b_j.
\end{equation}
By the triangle inequality, we get
\begin{align}
	\nonumber
	&[u-q\cdot x]^x_{\alpha;Q_{2^{-k}}(t_0,x_0)}\le [u-\fp_k]^x_{\alpha;Q_{2^{-k}}(t_0,x_0)} + [\fp_k-q\cdot x]^x_{\alpha;Q_{2^{-k}}(t_0,x_0)}\\
	\label{eq 2.12}
	&\le C2^{-k(1-\alpha)}\sum_{j=k}^\infty2^{-j(\sigma-1)}b_j\le C2^{-k(\sigma-\alpha)},
\end{align}
which implies that
\begin{equation}
	\nonumber
	\|u(t_0,\cdot)-u(t_0,x_0)-q\cdot(x-x_0)\|_{L_{\infty}(B_{2^{-k}}(x_0))}
\le C2^{-k}\sum_{j=k}^\infty2^{-j(\sigma-1)}b_j\le C2^{-k\sigma},
\end{equation}
and thus $q = \nabla u(t_0,x_0)$. It then follows from \eqref{eq 2.12} that
\begin{align}
	\nonumber
	&\sum_{k=0}^\infty 2^{k(\sigma-\alpha)}\sup_{(t_0,x_0)\in Q}[u-p_0]^x_{\alpha;Q_{2^{-k}}(t_0,x_0)}\\
	\nonumber	
	&\le C\sum_{k=0}^\infty 2^{k(\sigma-1)}\sum_{j=k}^\infty2^{-j(\sigma-1)}b_j\\
	\nonumber
	&= C\sum_{j=0}^\infty 2^{-j(\sigma-1)}b_j\sum_{k=0}^j 2^{k(\sigma-1)}
\le C\sum_{j=0}^\infty b_j.
\end{align}
This completes the proof of \eqref{eq 2.9}.
\end{proof}

The last lemma in this section is for the case when $\sigma \in[1,2)$.

\begin{lemma}
	\label{lem2.7}
Let $\alpha\in(0,1)$, $\sigma\in [1,2)$, and $R>0$ be constants. Let $p_0 = p_0(t,x)$ be the first-order Taylor's expansion of $u^{(R)}$ at the origin and $\tilde{u} = u-p_0$. Then for any integer $j\ge 0$, we have
\begin{align}
	\nonumber\label{eq 2.19a}
	&\sup_{\substack{(t,x),(t',x')\in (-R^\sigma,0)\times B_{2^j R}\\{(t,x)\neq (t',x'),\,0\le}|x-x'|<2R}}
\frac{|\tilde{u}(t,x)-\tilde{u}(t',x')|}
{|x-x'|^\alpha+|t-t'|^{\alpha/\sigma}}\\
	&\le C \inf_{p\in \mathcal{P}_1}[u-p]_{\alpha/\sigma,\alpha;(-R^\sigma,0)\times B_{2^{j}R}},
\end{align}
where $C>0$ is a constant depending only on $d$, $\alpha$, and $\sigma$.
\end{lemma}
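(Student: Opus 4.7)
The plan is to prove \eqref{eq 2.19a} in three steps: reduce to bounding the left-hand side by the Hölder seminorm $M:=[u]_{\alpha/\sigma,\alpha;(-R^\sigma,0)\times B_{2^jR}}$, split via the triangle inequality, and then estimate the two derivatives of $u^{(R)}$ at the origin.

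First I would verify that $\tilde u$ is invariant modulo an additive constant under replacing $u$ by $u-p$ for any $p\in\mathcal{P}_1$. For $p(t,x)=a+bt+c\cdot x$, the identities $\int\eta=\int\zeta=1$ and $\int y\zeta(y)\,dy=0$ (by the radial symmetry of $\zeta$) give
$$p^{(R)}(t,x)=p(t,x)-bR^\sigma\bar s,\qquad \bar s:=\int s\eta(s)\,ds,$$
so $p^{(R)}$ is its own first-order Taylor expansion at $0$ up to this constant. A short computation then shows that the analogue of $\tilde u$ for $u-p$ equals $\tilde u-bR^\sigma\bar s$. This mirrors the reduction at the beginning of Lemma \ref{lem2.4}, and hence it suffices to bound the left-hand side of \eqref{eq 2.19a} by $CM$.

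Next I split
$$|\tilde u(t,x)-\tilde u(t',x')|\le |u(t,x)-u(t',x')|+|p_0(t,x)-p_0(t',x')|,$$
where the first term is immediately bounded by $M(|t-t'|^{\alpha/\sigma}+|x-x'|^\alpha)$. For the second term only the coefficients $\partial_t u^{(R)}(0,0)$ and $\nabla_x u^{(R)}(0,0)$ appear. Differentiating $u^{(R)}$ under the integral and integrating by parts in $s$ (respectively $y$) moves the derivative onto $\eta$ (respectively $\zeta$). Since $\int\eta'=0$ and $\int\nabla\zeta=0$, I may subtract the constant $u(0,0)$ inside each integrand. Because the sample points $(-R^\sigma s,-Ry)$ with $s\in(0,1)$ and $y\in B_1$ lie in $(-R^\sigma,0]\times B_R\subset(-R^\sigma,0)\times B_{2^jR}$ for $j\ge 0$, the Hölder bound gives $|u(-R^\sigma s,-Ry)-u(0,0)|\le CMR^\alpha$, whence
$$|\partial_t u^{(R)}(0,0)|\le CMR^{\alpha-\sigma},\qquad |\nabla_x u^{(R)}(0,0)|\le CMR^{\alpha-1}.$$

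Finally, since $|t-t'|\le R^\sigma$ holds automatically on $(-R^\sigma,0)$ and $|x-x'|<2R$ is the standing hypothesis, these bounds yield
$$|\partial_t u^{(R)}(0,0)|\,|t-t'|\le CM|t-t'|^{\alpha/\sigma}\quad\text{and}\quad |\nabla_x u^{(R)}(0,0)|\,|x-x'|\le CM|x-x'|^\alpha,$$
which, combined with the Hölder bound on $u(t,x)-u(t',x')$, proves \eqref{eq 2.19a}. The main point requiring care is the invariance step, which relies crucially on the radial symmetry of $\zeta$ so that the first-order $x$-moment vanishes; the restriction $|x-x'|<2R$ is also sharp, as without it the linear correction $\nabla_x u^{(R)}(0,0)\cdot(x-x')$ cannot be absorbed into a multiple of $|x-x'|^\alpha$.
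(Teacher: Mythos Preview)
Your proof is correct and follows essentially the same approach as the paper: reduce to bounding by $[u]_{\alpha/\sigma,\alpha}$ via the invariance of $\tilde u$ under $u\mapsto u-p$, then split off $|p_0(t,x)-p_0(t',x')|$ and estimate $|\partial_t u^{(R)}(0,0)|$ and $|\nabla_x u^{(R)}(0,0)|$ using the constraints $|t-t'|\le R^\sigma$ and $|x-x'|<2R$. Your presentation is in fact more detailed than the paper's, which simply asserts the invariance and the derivative bounds without writing out the integration by parts or the moment computation for $\zeta$.
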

\begin{proof}
It is easily seen that $\tilde u$ is invariant up to a constant if we replace $u$ by $u-p$ for any $p\in \cP_1$. Thus to show \eqref{eq 2.19a}, we only bound the left-hand side of \eqref{eq 2.19a} by
$$
C[u]_{\alpha/\sigma,\alpha;(-R^\sigma,0)\times B_{2^{j}R}}.
$$
Since $\tilde u=u-p_0$, it suffices to observe that for any two distinct $(t,x),(t',x')\in (-R^\sigma,0)\times B_{2^j R}$ such that $0\le|x-x'|<2R$,
\begin{align*}
&|p_0(t,x)-p_0(t',x')|\\
&\le |x-x'||Du^{(R)}(0,0)|+|t-t'||\partial_t u^{(R)}(0,0)|\\
&\le C|x-x'|R^{\alpha-1}[u]^x_{\alpha;Q_{R}}
+C|t-t'|R^{\sigma(\alpha/\sigma-1)}[u]^t_{\alpha/\sigma;Q_{R}}\\
&\le C\big(|x-x'|^\alpha+|t-t'|^{\alpha/\sigma}\big)
[u]_{\alpha/\sigma,\alpha;Q_{R}}.
\end{align*}
The lemma is proved.
\end{proof}

\section{Dini estimates for nonlocal nonlinear parabolic equations}\label{sec:Dini}
The following proposition is a further refinement of \cite[Corollary 4.6]{DZ16}.
\begin{proposition}\label{prop3.1}
Let $\sigma\in(0,2)$ and $0< \lambda \le \Lambda$. Assume that for any $\beta\in \mathcal{A}, K_\beta$ only depends on $y$. There is a constant $\hat{\alpha}$ depending on $d$, $\sigma$, $\lambda$, and $\Lambda$ (uniformly as $\sigma\to 2^-$) so that the following holds. Let $\alpha\in (0,\hat{\alpha})$ such that $\sigma+\alpha$ is not an integer. Suppose $u\in C^{1+\alpha/\sigma,\sigma+\alpha}(Q_1)\cap C^{\alpha/\sigma,\alpha}((-1,0)\times\bR^d)$ is a solution of
\begin{equation*}
\partial_t u = \inf_{\beta\in\mathcal{A}}(L_\beta u +f_\beta)\quad \text{in}\quad Q_1.
\end{equation*}
Then,
\begin{equation*}
[u]_{1+\alpha/\sigma,\alpha+\sigma;Q_{1/2}}\le C\sum_{j=1}^\infty2^{-j\sigma}M_j+C\sup_\beta[f_\beta]_{\alpha/\sigma,\alpha;Q_1},
\end{equation*}
where
\begin{equation*}
M_j = \sup_{^{(t,x),(t',x')\in (-1,0)\times B_{2^j},}_{(t,x)\neq (t',x'),\, 0\le|x-x'|<2}}\frac{|u(t,x)-u(t',x')|}{|x-x'|^\alpha+|t-t'|^{\alpha/\sigma}},
\end{equation*}
and $C > 0$ depends only on $d,\lambda,\Lambda, \alpha$ and $\sigma$, and is uniformly bounded as $\sigma\to 2^-$.
\end{proposition}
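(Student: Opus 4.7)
My plan is to derive this refinement from \cite[Corollary 4.6]{DZ16}, which already provides a $C^{1+\alpha/\sigma,\sigma+\alpha}$ Schauder-type estimate for solutions of the translation-invariant equation $\partial_t u=\inf_\beta(L_\beta u+f_\beta)$ with a cruder, non-refined right-hand side involving a global norm of $u$. The essential content to be added is replacing this by the weighted dyadic sum $\sum_{j=1}^\infty 2^{-j\sigma}M_j$, which accommodates $C^\alpha$-type growth of $u$ at infinity. Since only semi-norms appear on both sides, I can subtract $u(0,0)$ from $u$ (and a constant from each $f_\beta$) without changing the equation, so only oscillation/semi-norm quantities need to enter the right-hand side.

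The first step is to split each operator $L_\beta=L_\beta^{\mathrm{in}}+L_\beta^{\mathrm{out}}$ by integrating $\delta u\,K_\beta$ over $B_1$ and over $\bR^d\setminus B_1$, respectively. Because $L_\beta^{\mathrm{out}}u$ is just a function of $(t,x)$, the equation can be rewritten as $\partial_t u=\inf_\beta(L_\beta^{\mathrm{in}}u+\widetilde f_\beta)$ with $\widetilde f_\beta(t,x):=f_\beta(t,x)+L_\beta^{\mathrm{out}}u(t,x)$. The second step is to prove
\[
\sup_\beta[\widetilde f_\beta]_{\alpha/\sigma,\alpha;Q_{1/2}}
\le \sup_\beta[f_\beta]_{\alpha/\sigma,\alpha;Q_1}+C\sum_{j=1}^\infty 2^{-j\sigma}M_j
\]
via a dyadic decomposition $\bR^d\setminus B_1=\bigcup_{j\ge 1}A_j$ with $A_j=B_{2^j}\setminus B_{2^{j-1}}$. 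On $A_j$ one has $|K_\beta(y)|\le C 2^{-j(d+\sigma)}$ and $|A_j|\sim 2^{jd}$, while for $(t,x),(t',x')\in Q_{1/2}$ with $|x-x'|<2$ the quantity $[u(t,x+y)-u(t',x'+y)]-[u(t,x)-u(t',x')]$ can be bounded (since both shifted pairs lie in $B_{2^{j+1}}$ at mutual distance $<2$) by $CM_{j+1}\bigl(|x-x'|^\alpha+|t-t'|^{\alpha/\sigma}\bigr)$. For $\sigma=1$, the symmetry condition \eqref{eq10.58} ensures $\int_{A_j}y\,K_\beta\,dy=0$ so the additional drift-like term in $\delta u$ does not contribute. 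For $\sigma>1$, the subtraction $y\cdot Du$ inside the tail produces an extra term $-Du\cdot c_\beta$ with $|c_\beta|\lesssim 1$; this either gets absorbed into an extended Schauder estimate with bounded drift, or, cleanlier, one redefines $\delta u$ in the tail without the $y\cdot Du$ correction (which is permissible for $|y|>1$ when $\sigma>1$) and places the resulting linear term on the left.

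Having reduced the problem to an equation whose source term has $C^{\alpha/\sigma,\alpha}$ semi-norm bounded by $\sup_\beta[f_\beta]_{\alpha/\sigma,\alpha;Q_1}+C\sum 2^{-j\sigma}M_j$, I would then apply \cite[Corollary 4.6]{DZ16} to obtain
\[
[u]_{1+\alpha/\sigma,\sigma+\alpha;Q_{1/2}}
\le C\|u-u(0,0)\|_{L_\infty(Q_1)}+C\sup_\beta[\widetilde f_\beta]_{\alpha/\sigma,\alpha;Q_1},
\]
and finally observe that $\|u-u(0,0)\|_{L_\infty(Q_1)}\le CM_1$ because $Q_1\subset(-1,0)\times B_2$ has diameter $\lesssim 1$, so the oscillation is controlled by the $C^{\alpha/\sigma,\alpha}$ semi-norm. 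Combining everything yields the claimed estimate. The main obstacle I anticipate is justifying that \cite[Corollary 4.6]{DZ16} accommodates a $\beta$-dependent source term $\widetilde f_\beta$ built from $L_\beta^{\mathrm{out}}u$, together with the effective drift generated by the $y\cdot Du$ correction when $\sigma>1$, all while maintaining the constant uniformly bounded as $\sigma\to 2^-$. The case $\sigma=1$ is also delicate, as the cancellation \eqref{eq10.58} has to be invoked precisely to prevent $L_\beta^{\mathrm{out}}u$ from carrying a non-integrable tail.
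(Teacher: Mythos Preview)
Your core intuition---that the only refinement needed over \cite[Corollary 4.6]{DZ16} is replacing the global $C^{\alpha/\sigma,\alpha}$ norm of $u$ by the dyadic tail sum $\sum_j 2^{-j\sigma}M_j$, together with subtracting $u(0,0)$---is exactly right, and matches the paper. However, your execution differs from the paper's and creates precisely the obstacles you flag at the end.

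The paper does not split $L_\beta=L_\beta^{\mathrm{in}}+L_\beta^{\mathrm{out}}$ and then invoke Corollary~4.6 as a black box. Instead it goes \emph{inside} the proof of Corollary~4.6, where a localization via a cutoff $\eta$ already produces a commutator term $h_\beta=\eta L_\beta u-L_\beta(\eta u)$; the estimate of $[h_\beta]_{\alpha/\sigma,\alpha;Q_1}$ in that proof is carried out dyadic-annulus by dyadic-annulus and naturally involves quantities of the form $[u]_{\alpha/\sigma,\alpha;(-1,0)\times B_{2^j}}$. The sole observation is that in each annulus the full semi-norm on $B_{2^j}$ is never used---only differences with $|x-x'|<2$ appear---so $[u]_{\alpha/\sigma,\alpha;(-1,0)\times B_{2^j}}$ can be replaced by $M_j$. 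This is your dyadic estimate, but applied to the localization commutator that is already present, not to a separately introduced tail operator.

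Your route runs into trouble because the truncated kernel $K_\beta\chi_{B_1}$ violates the lower ellipticity bound $(2-\sigma)\lambda\le a_\beta$ outside $B_1$, so Corollary~4.6 does not apply to $L_\beta^{\mathrm{in}}$ as stated; and even granting an ad hoc extension, the $\beta$-dependent drift $c_\beta\cdot Du$ you generate when $\sigma>1$ cannot be ``placed on the left'' of an equation of the form $\partial_t u=\inf_\beta(\cdots)$. None of this arises if you simply refine the existing $h_\beta$ estimate inside the proof of Corollary~4.6, where the operator is never truncated and no spurious drift appears.
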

\begin{proof}
This follows from the proof of \cite[Corollary 4.6]{DZ16} by observing that in the estimate of $[h_\beta]_{\alpha/\sigma,\alpha;Q_1}$, the term $[u]_{\alpha/\sigma,\alpha;(-1,0)\times B_{2^j}}$ can be replaced by $M_j$. Moreover, by replacing $u$ by $u-u(0,0)$, we see that
\begin{align*}
\|u\|_{\alpha/\sigma,\alpha;(-1,0)\times B_2}\le C[u]_{\alpha/\sigma,\alpha;(-1,0)\times B_2}.
\end{align*}
The lemma is proved.
\end{proof}

In the rest of this section, we consider three cases separately.
\subsection{The case $\sigma \in(0,1)$}
\begin{proposition}
	\label{prop3.21}
Suppose that \eqref{eq1.1} is satisfied in $Q_{2^{1/\sigma}}$. Then under the conditions of Theorem \ref{thm 1}, we have
\begin{equation}
	\label{eq 8.16}
[u]_{\sigma;Q_{1/2}}^x+\|\partial_t u\|_{L_\infty;Q_{1/2}} \le C\|u\|_{\alpha/\sigma,\alpha}+C\sum_{k=1}^\infty\omega_f(2^{-k}),
\end{equation}
where $C>0$ is a constant depending only on $d$, $\lambda$, $\Lambda$, $\omega_a$, and $\sigma$. 
\end{proposition}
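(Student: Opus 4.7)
The plan is to adapt the Campanato-type perturbation strategy from \cite{DZ162} to the parabolic setting. The two main ingredients are the reduction Lemma~\ref{lem2.1}(i), which expresses the $C^\sigma_x$ semi-norm and $L_\infty$ bound on $\partial_t u$ as a weighted sum of local $C^{\alpha/\sigma,\alpha}$ oscillations modulo polynomials in $\mathcal{P}_t$, and the interior regularity Proposition~\ref{prop3.1} for the frozen-coefficient problem.

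First, I would apply Lemma~\ref{lem2.1}(i) with any $\alpha \in (0, \min(\sigma, \hat\alpha))$, where $\hat\alpha$ comes from Proposition~\ref{prop3.1}, reducing \eqref{eq 8.16} to bounding the series
\[
\sum_{k=0}^\infty 2^{k(\sigma - \alpha)} \sup_{(t_0, x_0) \in Q_{1/2}} \inf_{p \in \mathcal{P}_t}[u - p]_{\alpha/\sigma, \alpha; Q_{2^{-k}}(t_0, x_0)}
\]
by $C\|u\|_{\alpha/\sigma, \alpha} + C\sum_k \omega_f(2^{-k})$. For a fixed base point $(t_0, x_0) \in Q_{1/2}$ and scale $r = 2^{-k}$, I introduce the frozen equation $\partial_t h = \inf_{\beta}(\bar L_\beta h + f_\beta(t_0, x_0))$ with $\bar K_\beta(y) := K_\beta(t_0, x_0, y)$, solved on a cylinder $Q_{cr}(t_0, x_0)$ with $h = u$ on the parabolic boundary. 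Then $v := u - h$ satisfies Pucci-type inequalities whose right-hand side is dominated pointwise by $\sup_\beta|(L_\beta - \bar L_\beta)u(t,x)| + \omega_f(r)$. Decomposing the kernel-difference integral over dyadic annuli and using the Dini hypothesis on $\omega_a$ together with the H\"older regularity of $u$, this right-hand side is bounded in $L_\infty(Q_{cr})$ by $C(\omega_a(r)\|u\|_{\alpha/\sigma,\alpha} + \omega_f(r))$, and the parabolic maximum principle then gives
\[
\|v\|_{L_\infty(Q_{cr})} \le C r^\sigma \bigl(\omega_a(r)\|u\|_{\alpha/\sigma,\alpha} + \omega_f(r)\bigr).
\]

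Next, rescaling $\tilde h(s, z) := h(t_0 + r^\sigma s, x_0 + r z)$ converts the frozen equation into one on $Q_c$ with constant forcing $r^\sigma f_\beta(t_0, x_0)$, so Proposition~\ref{prop3.1} applied to $\tilde h$ produces interior $C^{1+\alpha/\sigma, \sigma+\alpha}$ regularity of $h$. Choosing the first-order-in-$t$ Taylor polynomial $p^\ast_r(t) := h(t_0, x_0) + (t - t_0)\partial_t h(t_0, x_0) \in \mathcal{P}_t$, Taylor's theorem together with the spatial H\"older estimate gives
\[
[h - p^\ast_r]_{\alpha/\sigma, \alpha; Q_{r/2}(t_0, x_0)} \le C r^\sigma [h]_{1+\alpha/\sigma, \sigma+\alpha; Q_{r/2}(t_0, x_0)}.
\]
Combining this with the $L_\infty$ bound on $v$ (after a routine interpolation against the H\"older norm of $u$ inherited from $\|u\|_{\alpha/\sigma,\alpha}$, which handles $[u-h]_{\alpha/\sigma,\alpha; Q_{r/2}}$) yields a bound for $\inf_{p \in \mathcal{P}_t}[u - p]_{\alpha/\sigma, \alpha; Q_{r/2}(t_0, x_0)}$ in terms of $\omega_a(r), \omega_f(r)$, and $\|u\|_{\alpha/\sigma,\alpha}$.

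The main obstacle is that a naive single-scale perturbation bound does not decay in $r$, so the weighted series $2^{k(\sigma-\alpha)}\cdot(\text{bound})$ fails to sum directly. The resolution, following \cite{DZ162}, is a telescoping argument: the approximating polynomials $p^\ast_{2^{-k}}$ at successive scales differ by controllable increments, with the scale-$2^{-k}$ gap governed by $\omega_a(2^{-k}) + \omega_f(2^{-k})$ multiplied by the running seminorm. Summing this telescoping estimate, the Dini continuity of $\omega_a$ allows its contribution to be absorbed into the implicit constant $C$, while the $\omega_f$ contribution emerges explicitly as $\sum_k \omega_f(2^{-k})$, matching the form of \eqref{eq 8.16}. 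Finally, the uniform continuity of $\partial_t u$ on $Q_{1/2}$ follows from the modulus-of-continuity statement in Lemma~\ref{lem2.1}(i), since the Dini-summable tail vanishes as the scale shrinks.
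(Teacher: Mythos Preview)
Your overall strategy matches the paper's, but there is a genuine gap in the perturbation step. You claim that on $Q_{cr}(t_0,x_0)$,
\[
\sup_\beta \bigl|(L_\beta - \bar L_\beta)u(t,x)\bigr| \le C\,\omega_a(r)\,\|u\|_{\alpha/\sigma,\alpha}.
\]
This fails for $\alpha<\sigma$. Decompose the kernel-difference integral over dyadic annuli $2^{-j-1}\le|y|<2^{-j}$: by the hypothesis on $a_\beta$, the contribution of the $j$-th annulus is at most
\[
C\,\omega_a(r)\,2^{j(\sigma-\alpha)}\,[u]^x_{\alpha;B_{2^{-j}}(x)},
\]
and summing in $j$ with the \emph{global} seminorm $[u]^x_\alpha$ in place of the local one gives a divergent series since $\sigma-\alpha>0$. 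The correct bound (see~\eqref{eq 3.5}) is
\[
C_k \le \omega_f(2^{-k}) + C\omega_a(2^{-k})\Bigl(\sup_{(t_0,x_0)\in Q_{2^{-k}}}\sum_{j=0}^\infty 2^{j(\sigma-\alpha)}[u]^x_{\alpha;Q_{2^{-j}}(t_0,x_0)}+\|u\|_{L_\infty}\Bigr),
\]
and the inner sum is precisely the quantity you are trying to control via Lemma~\ref{lem2.1}(i). The estimate is therefore self-referential: the right-hand side contains (a version of) the left-hand side.

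The paper resolves this not by telescoping the polynomials $p^\ast_{2^{-k}}$, but by an absorption argument over a nested family of domains $Q^{\ell_0}=Q_{1/2}\subset Q^{\ell_0+1}\subset\cdots\subset Q_1$. After summing the per-scale estimate in $k$, one arrives at an inequality of the form
\[
S(Q^l) \le \tfrac14\, S(Q^{l+1}) + C2^{l(\sigma-\alpha)}\|u\|_{\alpha/\sigma,\alpha} + C\sum_k\omega_f(2^{-k}),
\]
where $S(Q^l)=\sum_j 2^{j(\sigma-\alpha)}\sup_{Q^l}\inf_{p\in\cP_t}[u-p]_{\alpha/\sigma,\alpha;Q_{2^{-j}}}$; the factor $\tfrac14$ is obtained by first choosing the scale-gap parameter $k_0$ large and then using the Dini smallness of $\sum_{k\ge l}\omega_a(2^{-k})$ for $l\ge\ell_0$ large. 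Multiplying by $4^{-l}$ and summing in $l$ closes the loop. Your proposed telescoping of the $p^\ast_r$ does not address this feedback, because the increment at scale $2^{-k}$ is governed by $\omega_a(2^{-k})$ times the \emph{full} unknown series, not times $\|u\|_{\alpha/\sigma,\alpha}$. A secondary issue: you pass from $\|u-h\|_{L_\infty}$ to $[u-h]_{\alpha/\sigma,\alpha}$ by ``routine interpolation,'' but there is nothing to interpolate against; the paper instead applies a direct H\"older estimate (\cite[Lemma~2.5]{DZ16}) to the Pucci inequalities satisfied by $w=u-p_0-v$, which yields $[w]_{\alpha/\sigma,\alpha}$ with the correct $2^{-k(\sigma-\alpha)}$ prefactor.
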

\begin{proof}
For $k\in \mathbb{N}$, let $v$ be the solution of
\begin{equation*}
\begin{cases}
\partial_t v = \inf_{\beta\in\mathcal{A}}(L_\beta(0,0)v+f_\beta(0,0)-\partial_t p_0)\quad& \text{in}\,\,Q_{2^{-k}},\\
v = u-p_0(t)\quad & \text{in}\,\, \big((-2^{-k\sigma},0)\times B_{2^{-k}}^c\big)\cup \big(\{t=-2^{-k\sigma}\}\times B_{2^{-k}}\big),
\end{cases}
\end{equation*}
where $L_\beta(0,0)$  is the operator with kernel $K_\beta(0,0,y)$, and $p_0(t)$ is the Taylor's expansion of $u^{(2^{-k})}$ in $t$ at the origin. Then by Proposition \ref{prop3.1} with scaling, we have
\begin{align}
[v]_{1+\alpha/\sigma,\alpha+\sigma;Q_{2^{-k-1}}} \le C\sum_{j=1}^\infty 2^{(k-j)\sigma}M_j+ C2^{k\sigma}[v]_{\alpha/\sigma,\alpha;Q_{2^{-k}}},
	\label{eq 1.121}
\end{align}
where $\alpha\in(0,\hat{\alpha})$ satisfying $\sigma+\alpha<1$,
\begin{equation*}
M_j = \sup_{\substack{(t,x),(t',x')\in (-2^{-k\sigma},0)\times B_{2^{j-k}},\\(t,x)\neq(t',x'),\,0\le |x-x'|<2^{-k+1}}}\frac{|\tilde{u}(t,x)-\tilde{u}(t',x')|}
{|x-x'|^\alpha+|t-t'|^{\alpha/\sigma}},
\end{equation*}
and $\tilde u=u-p_0$.

Let $k_0\ge 1$ be an integer to be specified and  $p_1 = p_1(t)$ be the Taylor's expansion of $v$ in $t$ at the origin. By the mean value formula,
\begin{align*}
\|v-p_1\|_{L_\infty(Q_{2^{-k-k_0}})}\le 2^{-(k+k_0)(\sigma+\alpha)}[v]_{1+\alpha/\sigma,\sigma+\alpha;Q_{2^{-k-k_0}}},
\end{align*}
and the interpolation inequality
\begin{align*}
&[v-p_1]_{\alpha/\sigma,\alpha;Q_{2^{-k-k_0}}}\\
&\le C\big(2^{(k+k_0)\alpha}\|v-p_1\|_{L_\infty(Q_{2^{-k-k_0}})}
+2^{-(k+k_0)\sigma}[v-p_1]_{1+\alpha/\sigma,\alpha+\sigma;Q_{2^{-k-k_0}}}\big),
\end{align*}
we obtain
\begin{align*}
[v-p_1]_{\alpha/\sigma,\alpha;Q_{2^{-k-k_0}}}\le C2^{-(k+k_0)\sigma}[v]_{1+\alpha/\sigma,\alpha+\sigma;Q_{2^{-k-k_0}}}.
\end{align*}
From Lemma \ref{lem2.4}, we have
\begin{align}
	 \label{eq 1.281}
	M_j\le C\inf_{p\in \mathcal{P}_t}[u-p]_{\alpha/\sigma,\alpha;(-2^{-k\sigma},0)\times B_{2^{j-k}}}
\le C[u]_{\alpha/\sigma,\alpha;(-2^{-k\sigma},0)\times \bR^d}.
\end{align}
These and \eqref{eq 1.121} give
\begin{align}
	\nonumber
&[v-p_1]_{\alpha/\sigma,\alpha;Q_{2^{-k-k_0}}}\le C2^{-(k+k_0)\sigma}\sum_{j=1}^k2^{(k-j)\sigma}M_j\\
	\label{eq 3.3}
&\quad +C2^{-(k+k_0)\sigma}[u]_{\alpha/\sigma,\alpha;(-2^{-k\sigma},0)\times \bR^d}+C2^{-k_0\sigma}[v]_{\alpha/\sigma,\alpha;Q_{2^{-k}}}\nonumber\\
&\le C2^{-(k+k_0)\sigma}\sum_{j=1}^k2^{(k-j)\sigma}M_j
+C2^{-(k+k_0)\sigma}[u]_{\alpha/\sigma,\alpha}+C2^{-k_0\sigma}[v]_{\alpha/\sigma,\alpha;Q_{2^{-k}}}.
\end{align}

Next, $w: =u-p_0-v$ satisfies
\begin{equation}
\label{eq 3.4}
\begin{cases}
w_t-\mathcal{M}^+w\le C_k\quad &\text{in}\,\, Q_{2^{-k}},\\
w_t-\mathcal{M}^-w \ge -C_k\quad &\text{in}\,\, Q_{2^{-k}},\\
w = 0\quad &\text{in}\,\, \big((-2^{-k\sigma},0)\times B_{2^{-k}}^c\big)\cup \big(\{t=-2^{-k\sigma}\}\times B_{2^{-k}}\big),
\end{cases}
\end{equation}
where $\mathcal{M}^+$ and $\mathcal{M}^-$ are the Pucci extremal operators (see, e.g., \cite{DZ16}), and
$$
C_k = \sup_{\beta\in \mathcal{A}}\|f_\beta-f_\beta(0,0)
+(L_\beta-L_\beta(0,0))u\|_{L_\infty(Q_{2^{-k}})}.
$$
It is easily seen that
\begin{align*}
	C_k
	&\le \omega_f(2^{-k}) + C\omega_a(2^{-k})\Big(\sup_{(t_0,x_0)\in Q_{2^{-k}}}\sum_{j=0}^\infty 2^{j(\sigma-\alpha)}[u]^x_{\alpha;Q_{2^{-j}}(t_0,x_0)}+\|u\|_{L_\infty}\Big).
\end{align*}
Then by the H\"older estimate \cite[Lemma 2.5]{DZ16}, we have
\begin{align}
	\nonumber
&[w]_{\alpha/\sigma,\alpha;Q_{2^{-k}}}\le C2^{-k(\sigma-\alpha)}C_k
\le C2^{-k(\sigma-\alpha)}\\
                    	\label{eq 3.5}
&\cdot\Big[\omega_f(2^{-k})+\omega_a(2^{-k})\Big(\sup_{(t_0,x_0)\in Q_{2^{-k}}}\sum_{j=0}^\infty 2^{j(\sigma-\alpha)}[u]^x_{\alpha;Q_{2^{-j}}(t_0,x_0)}+\|u\|_{L_\infty}\Big)\Big]
\end{align}
for some $\alpha>0$. This $\alpha$ can be the same as the one in \eqref{eq 1.121} since $\alpha$ is always small. 
%
By the triangle inequality and Lemma \ref{lem2.4} with $j=0$
\begin{align}
                                    \label{eq11.34}
	&[v]_{\alpha/\sigma,\alpha;Q_{2^{-k}}} \le [w]_{\alpha/\sigma,\alpha;Q_{2^{-k}}} + [u-p_0]_{\alpha/\sigma,\alpha;Q_{2^{-k}}}\nonumber\\
	&\le [w]_{\alpha/\sigma,\alpha;Q_{2^{-k}}} +C \inf_{p\in \mathcal{P}_t}[u-p]_{\alpha/\sigma,\alpha;Q_{2^{-k}}}.
\end{align}
Combining \eqref{eq 3.3}, \eqref{eq 3.5}, \eqref{eq 1.281}, and \eqref{eq11.34} yields
\begin{align}
	\nonumber	
&2^{(k+k_0)(\sigma-\alpha)}[u-p_0-p_1]_{\alpha/\sigma,\alpha;Q_{2^{-k-k_0}}}\\
	\nonumber	
&=2^{(k+k_0)(\sigma-\alpha)}[w+v-p_1]_{\alpha/\sigma,\alpha;Q_{2^{-k-k_0}}}\\
	\nonumber
	&\le C2^{-(k+k_0)\alpha}\sum_{j=1}^k2^{(k-j)\sigma}
\inf_{p\in \mathcal{P}_t}[u-p]_{\alpha/\sigma,\alpha;(-2^{-k\sigma},0)\times B_{2^{j-k}}}\\
	\nonumber
&\,\,+C2^{-(k+k_0)\alpha}[u]_{\alpha/\sigma,\alpha}+C2^{-k_0\alpha+k(\sigma-\alpha)} \inf_{p\in \mathcal{P}_t}[u-p]_{\alpha/\sigma,\alpha;Q_{2^{-k}}}+C2^{k_0(\sigma-\alpha)}\omega_f(2^{-k})\nonumber\\
	\label{eq 3.7}	
&\,\,
+C2^{k_0(\sigma-\alpha)}\omega_a(2^{-k})\Big(\sup_{(t_0,x_0)\in Q_{2^{-k}}}\sum_{j=0}^\infty 2^{j(\sigma-\alpha)}[u]^x_{\alpha;Q_{2^{-j}(t_0,x_0)}}
+\|u\|_{L_\infty}\Big).
\end{align}
Let $\ell_0\ge 1$ be an integer  such that
\[
\frac{1}{2^\sigma}+\sum_{l=\ell_0+1}^\infty\frac{1}{2^{l\sigma}}\le 1.
\]
Denote $Q^{\ell_0}=Q_{1/2}$ and for  $l=\ell_0+1,\ell_0+2, \ldots$, we denote
$$Q^l := (-\frac{1}{2^\sigma}-\sum_{j=\ell_0+1}^{l}\frac{1}{2^{j\sigma}},0]\times \Big\{x: |x|<\frac{1}{2}+\sum_{j=\ell_0+1}^{l}\frac{1}{2^{j}}\Big\}.$$
The choice of $\ell_0$ will ensure that $Q^l\subset Q_1$ for all $l\ge\ell_0$, and the definition of $Q^l$ will ensure that for $l\ge\ell_0$, $k\ge l+1$, there holds
\[
Q^l+ Q_{2^{-k}}(t_0,x_0)\subset Q^{l+1}\mbox{ for all }(t_0,x_0)\in Q^l.
\]

By translation of the coordinates, from \eqref{eq 3.7} we have for any $l\ge \ell_0$ and $k\ge l+1$,
\begin{align}
	\nonumber
&2^{(k+k_0)(\sigma-\alpha)}\sup_{(t_0,x_0)\in Q^l}[u-p_0-p_1]_{\alpha/\sigma,\alpha;Q_{2^{-k-k_0}}(t_0,x_0)}\\
	\nonumber
&\le  C2^{-(k+k_0)\alpha}\sup_{(t_0,x_0)\in Q^l}\sum_{j=0}^k2^{(k-j)\sigma}\inf_{p\in \mathcal{P}_t}[u-p]_{\alpha/\sigma,\alpha;(t_0-2^{-k\sigma},t_0)\times B_{2^{j-k}}(x_0)}+C2^{-(k+k_0)\alpha}[u]_{\alpha/\sigma,\alpha}\\\nonumber
&\quad
+C2^{k_0(\sigma-\alpha)}\Big[ \omega_f(2^{-k})+\omega_a(2^{-k})\\
&\quad\quad\cdot\Big(\sup_{(t_0,x_0)\in Q^{l+1}}\sum_{j=0}^\infty 2^{j(\sigma-\alpha)}\inf_{p\in \mathcal{P}_t}[u-p]_{\alpha/\sigma,\alpha;Q_{2^{-j}}(t_0,x_0)}	+\|u\|_{L_\infty}\Big)\Big].
	\label{eq 3.8}
\end{align}
Then we take the sum \eqref{eq 3.8} in $k = l+1,l+2, \ldots$ to obtain
\begin{align*}
	\nonumber
	&\sum_{k=l+1}^\infty 2^{(k+k_0)(\sigma-\alpha)}\sup_{(t_0,x_0)\in Q^l}\inf_{p\in \mathcal{P}_t}[u-p]_{\alpha/\sigma,\alpha;Q_{2^{-k-k_0}}(t_0,x_0)}\\
	\nonumber
	&\le C\sum_{k=l+1}^\infty 2^{-(k+k_0)\alpha} \sup_{(t_0,x_0)\in Q^l}\sum_{j=0}^k2^{(k-j)\sigma}\inf_{p\in \mathcal{P}_t}[u-p]_{\alpha/\sigma,\alpha;(t_0-2^{-k\sigma},t_0)\times B_{2^{j-k}}(x_0)} \\
&\quad +C2^{-(l+k_0)\alpha}[u]_{\alpha/\sigma,\alpha}+C2^{k_0(\sigma-\alpha)}\sum_{k=l+1}^\infty\omega_f(2^{-k})
\\
	\nonumber &\quad +C2^{k_0(\sigma-\alpha)} \cdot
\sum_{k=l+1}^\infty\omega_a(2^{-k})\Big(\sum_{j=0}^\infty 2^{j(\sigma-\alpha)}\sup_{(t_0,x_0)\in Q^{l+1}}\inf_{p\in \mathcal{P}_t}[u-p]_{\alpha/\sigma,\alpha;Q_{2^{-j}}(t_0,x_0)}+ \|u\|_{L_\infty}\Big).
\end{align*}
By switching the order of summations and then replacing $k$ by $k+j$, the first term on the right-hand side is bounded by
\begin{align*}
&C2^{-k_0\alpha}\sum_{j=0}^\infty 2^{-j\sigma}\sum_{k=j}^\infty2^{k(\sigma-\alpha)}\sup_{(t_0,x_0)\in Q^l}\inf_{p\in\mathcal{P}_t}[u-p]_{\alpha/\sigma,\alpha;
(t_0-2^{-k\sigma},t_0)\times B_{2^{j-k}}(x_0)}\\
&\le C2^{-k_0\alpha}\sum_{j=0}^\infty 2^{-j\alpha}\sum_{k=0}^\infty2^{k(\sigma-\alpha)}\sup_{(t_0,x_0)\in Q^l}\inf_{p\in\mathcal{P}_t}[u-p]_{\alpha/\sigma,\alpha;
(t_0-2^{-k\sigma},t_0)\times B_{2^{-k}}(x_0)}\\
&\le C2^{-k_0\alpha}\sum_{k=0}^\infty2^{k(\sigma-\alpha)}\sup_{(t_0,x_0)\in Q^l}\inf_{p\in \mathcal{P}_t}[u-p]_{\alpha/\sigma,\alpha;Q_{2^{-k}}(t_0,x_0)}.
\end{align*}
With the above inequality, we have
\begin{align*}
	&\sum_{k=l+1}^\infty 2^{(k+k_0)(\sigma-\alpha)}\sup_{(t_0,x_0)\in Q^l}\inf_{p\in \mathcal{P}_t}[u-p]_{\alpha/\sigma,\alpha;Q_{2^{-k-k_0}}(t_0,x_0)}\\
	&\le C2^{-k_0\alpha}\sum_{j=0}^\infty 2^{j(\sigma-\alpha)}\sup_{(t_0,x_0)\in Q^l}\inf_{p\in \mathcal{P}_t}[u-p]_{\alpha/\sigma,\alpha;Q_{2^{-j}}(t_0,x_0)}\\
	&\quad+C2^{-(l+k_0)\alpha}[u]_{\alpha/\sigma,\alpha} + C2^{k_0(\sigma-\alpha)}\sum_{k=l+1}^\infty\omega_f(2^{-k})\\ &\quad+C2^{k_0(\sigma-\alpha)}\sum_{k=l+1}^\infty\omega_a(2^{-k})\cdot
\Big(\sum_{j=0}^\infty 2^{j(\sigma-\alpha)}\sup_{(t_0,x_0)\in Q^{l+1}}\inf_{p\in \mathcal{P}_t}[u-p]_{\alpha/\sigma,\alpha;Q_{2^{-j}}(t_0,x_0)}+\|u\|_{L_\infty}\Big).
\end{align*}
The bound above together with the obvious inequality
\begin{equation*}
\sum_{j=0}^{l+k_0}2^{j(\sigma-\alpha)}\sup_{(t_0,x_0)\in Q^l}\inf_{p\in \mathcal{P}_t}[u-p]_{\alpha/\sigma,\alpha;Q_{2^{-j}}(t_0,x_0)} \le C2^{(l+k_0)(\sigma-\alpha)}[u]_{\alpha/\sigma,\alpha},
\end{equation*}
implies that
\begin{align*}
	&\sum_{j=0}^\infty 2^{j(\sigma-\alpha)}\sup_{(t_0,x_0)\in Q^l}\inf_{p\in \mathcal{P}_t}[u-p]_{\alpha/\sigma,\alpha;Q_{2^{-j}}(t_0,x_0)}\\
	&\le C2^{-k_0\alpha}\sum_{j=0}^\infty 2^{j(\sigma-\alpha)}\sup_{(t_0,x_0)\in Q^{l+1}}\inf_{p\in \mathcal{P}_t}[u-p]_{\alpha/\sigma,\alpha;Q_{2^{-j}}(t_0,x_0)}\\
	&\,\,+C2^{(l+k_0)(\sigma-\alpha)}[u]_{\alpha/\sigma,\alpha}
+C2^{k_0(\sigma-\alpha)}\sum_{k=l}^\infty
\omega_f(2^{-k})\\
	&\,\,+C2^{k_0(\sigma-\alpha)}\sum_{k=l}^\infty \omega_a(2^{-k})\cdot\Big(\sum_{j=0}^\infty 2^{j(\sigma-\alpha)}\sup_{(t_0,x_0)\in Q^{l+1}}\inf_{p\in \mathcal{P}_t}[u-p]_{\alpha/\sigma,\alpha;Q_{2^{-j}}(t_0,x_0)}
+\|u\|_{L_\infty}\Big).
\end{align*}
By first choosing $k_0$ sufficiently large, and then $\ell_0$ sufficiently large (recalling $l\ge\ell_0$), we get
\begin{align*}
&\sum_{j=0}^\infty 2^{j(\sigma-\alpha)}\sup_{(t_0,x_0)\in Q^l}\inf_{p\in \mathcal{P}_t}[u-p]_{\alpha/\sigma,\alpha;Q_{2^{-k}}(t_0,x_0)}\\
&\le \frac 14 \sum_{j=0}^\infty 2^{j(\sigma-\alpha)}\sup_{(t_0,x_0)\in Q^{l+1}}\inf_{p\in \mathcal{P}_t}[u-p]_{\alpha/\sigma,\alpha;Q_{2^{-k}}(t_0,x_0)}+C2^{(l+k_0)(\sigma-\alpha)} \|u\|_{\alpha/\sigma,\alpha}+ C\sum_{k=1}^\infty \omega_f(2^{-k}).
\end{align*}
Multiplying both sides by $4^{-l}$, taking the sum in $l$, we have
\begin{equation}\label{eq:thetrick}
4^{-l} \sum_{j=0}^\infty 2^{j(\sigma-\alpha)}\sup_{(t_0,x_0)\in Q^{l}}\inf_{p\in \mathcal{P}_t}[u-p]_{\alpha/\sigma,\alpha;Q_{2^{-k}}(t_0,x_0)}\le C \|u\|_{\alpha/\sigma,\alpha}+ C\sum_{k=1}^\infty \omega_f(2^{-k}).
\end{equation}
This, together with Lemma \ref{lem2.1} $(i)$ and the fact that $Q^{\ell_0}=Q_{1/2}$,  gives \eqref{eq 8.16} and the continuity of $\partial_t u$.
\end{proof}

\subsection{The case when $\sigma\in(1,2)$}\label{subsec:2}
\begin{proposition}
	\label{prop 3.22}
Suppose that \eqref{eq1.1} is satisfied in $Q_2$. Then under the conditions of Theorem \ref{thm 1}, we have  for $\sigma\in(1,2)$
\begin{equation}
	\label{eq 8.18}
[u]^x_{\sigma;Q_{1/2}}+[Du]^t_{\frac{\sigma-1}{\sigma};Q_{1/2}}+\|\partial_t u\|_{L_\infty(Q_{1/2})} \le C\|u\|_{\alpha/\sigma,\alpha}+C\sum_{k=1}^\infty\omega_f(2^{-k}),
\end{equation}
where $C>0$ is a constant depending only on $d$, $\lambda$, $\Lambda$, $\omega_a$, $\omega_b$, $N_0$, and $\sigma$.
\end{proposition}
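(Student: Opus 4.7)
The plan is to mimic the proof of Proposition \ref{prop3.21}, with several adjustments to accommodate $\sigma>1$: the Taylor expansion is taken in both $t$ and $x$, the semi-norms are modulo $\mathcal{P}_1$ rather than $\mathcal{P}_t$, Lemma \ref{lem2.7} replaces Lemma \ref{lem2.4} in controlling the nonlocal tails, Lemma \ref{lem2.1}(ii) gives the final seminorm, and Lemma \ref{lem2.2} bridges between the Taylor coefficients of the $x$-mollification and the $\mathcal{P}_1$-infimum. Fix $\alpha\in(0,\hat\alpha)$ with $\sigma+\alpha<2$. For each integer $k\ge 1$, let $p_0=p_0(t,x)$ be the first-order Taylor expansion at the origin of the parabolic mollification $u^{(2^{-k})}$ (in the sense of Section 2, so mollifying both $t$ and $x$), and write $\tilde u=u-p_0$. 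Because $L_\beta$ annihilates affine functions when $\sigma>1$, $\tilde u$ satisfies \eqref{eq1.1} with $f_\beta$ replaced by $f_\beta+b_\beta\cdot Du^{(2^{-k})}(0,0)-\partial_t u^{(2^{-k})}(0,0)$.

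Solve the frozen-coefficient problem
\begin{equation*}
\begin{cases}
\partial_t v=\inf_{\beta\in\mathcal{A}}\bigl(L_\beta(0,0)v+b_\beta(0,0)\cdot Dv+c_\beta\bigr) & \text{in }Q_{2^{-k}},\\
v=\tilde u & \text{on the parabolic boundary of }Q_{2^{-k}},
\end{cases}
\end{equation*}
where $c_\beta$ is the constant source inherited from Step 1. A rescaled application of Proposition \ref{prop3.1} (absorbing the drift as a lower-order perturbation since $\sigma>1$) produces
\begin{equation*}
[v]_{1+\alpha/\sigma,\sigma+\alpha;Q_{2^{-k-1}}}\le C\sum_{j=1}^\infty 2^{(k-j)\sigma}M_j+C2^{k\sigma}[v]_{\alpha/\sigma,\alpha;Q_{2^{-k}}},
\end{equation*}
where $M_j$ is the $(\alpha/\sigma,\alpha)$-seminorm of $\tilde u$ on $(-2^{-k\sigma},0)\times B_{2^{j-k}}$ restricted to pairs with $|x-x'|<2^{-k+1}$. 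Lemma \ref{lem2.7} bounds $M_j\le C\inf_{p\in\mathcal{P}_1}[u-p]_{\alpha/\sigma,\alpha;(-2^{-k\sigma},0)\times B_{2^{j-k}}}$. Let $p_1\in\mathcal{P}_1$ be the first-order Taylor expansion of $v$ at the origin; the interpolation
\begin{equation*}
[v-p_1]_{\alpha/\sigma,\alpha;Q_{2^{-k-k_0}}}\le C 2^{-(k+k_0)\sigma}[v]_{1+\alpha/\sigma,\sigma+\alpha;Q_{2^{-k-k_0}}}
\end{equation*}
handles the smooth part.

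For the error $w=\tilde u-v$, a Pucci comparison combined with the usual Hölder estimate yields $[w]_{\alpha/\sigma,\alpha;Q_{2^{-k}}}\le C 2^{-k(\sigma-\alpha)}C_k$ with
\begin{equation*}
C_k=\sup_\beta\bigl\|(f_\beta-f_\beta(0,0))+(L_\beta-L_\beta(0,0))\tilde u+(b_\beta-b_\beta(0,0))\cdot D\tilde u\bigr\|_{L_\infty(Q_{2^{-k}})}.
\end{equation*}
The nonlocal piece is invariant under affine corrections, so $\omega_a(2^{-k})$ times a tail series in $\inf_{p\in\mathcal{P}_1}[u-p]_{\alpha/\sigma,\alpha;Q_{2^{-j}}}$ bounds it as in Proposition \ref{prop3.21}. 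The new drift contribution is the main obstacle: since $D$ does not annihilate $\mathcal{P}_1$, we cannot directly replace $Du$ by an $\inf$-over-$\mathcal{P}_1$ expression. The remedy is to write $D\tilde u=Du-Du^{(2^{-k})}(0,0)$ and, via Lemma \ref{lem2.2} applied to $u$, control this difference by the same series $\sum_j 2^{j(\sigma-\alpha)}\sup\inf_{p\in\mathcal{P}_1}[u-p]_{\alpha/\sigma,\alpha;Q_{2^{-j}}}$; thus $\omega_b(2^{-k})$ multiplies the same tail.

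Collecting the three bounds, translating the origin, and taking $\sup_{(t_0,x_0)\in Q^l}\inf_{p\in\mathcal{P}_1}$ on the left, one obtains the same structure as \eqref{eq 3.8} but with $\mathcal{P}_1$ throughout and an extra $\omega_b(2^{-k})$ term on the right. Summing in $k\ge l+1$, then choosing $k_0$ large (to absorb the self-similar contribution through its $2^{-k_0\alpha}$ prefactor) and $\ell_0$ large (to make $\sum_{k\ge\ell_0}[\omega_a(2^{-k})+\omega_b(2^{-k})]$ small by Dini continuity), we derive
\begin{equation*}
\sum_{j=0}^\infty 2^{j(\sigma-\alpha)}\sup_{(t_0,x_0)\in Q^l}\inf_{p\in\mathcal{P}_1}[u-p]_{\alpha/\sigma,\alpha;Q_{2^{-j}}(t_0,x_0)}\le\tfrac14(\cdots)_{l+1}+C[u]_{\alpha/\sigma,\alpha}2^{(l+k_0)(\sigma-\alpha)}+C\sum_{k=1}^\infty\omega_f(2^{-k}).
\end{equation*}
The geometric-sum trick in \eqref{eq:thetrick} (multiplying by $4^{-l}$ and summing in $l\ge\ell_0$) converts this into an absolute bound on the series at $l=\ell_0$. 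Feeding the bound into Lemma \ref{lem2.1}(ii) produces the estimates on $\|\partial_t u\|_{L_\infty}$ and $[u]^x_{\sigma;Q_{1/2}}$, while $[Du]^t_{(\sigma-1)/\sigma;Q_{1/2}}$ comes from the bound already present in Lemma \ref{lem2.1}(ii) (or equivalently from \cite[Section 3.3]{Krylov97}); the uniform continuity of $\partial_t u$ follows from the tail behavior of the same series, as in Lemma \ref{lem2.1}. The principal technical difficulty is the drift bookkeeping outlined above, namely producing a drift-consistent version of the Pucci error $C_k$ that stays compatible with $\mathcal{P}_1$-modulo seminorms.
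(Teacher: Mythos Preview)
Your outline tracks the paper's argument, but differs in two substantive places and misidentifies the tool for the drift.

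\textbf{Frozen drift versus drift on $v$.} The paper does \emph{not} keep a drift term in the frozen problem. Instead $v_M$ solves
\[
\partial_t v_M=\inf_{\beta}\bigl(L_\beta(0,0)v_M+f_\beta(0,0)+b_\beta(0,0)\,Du(0,0)-\partial_t p_0\bigr),
\]
with the drift frozen as the constant $b_\beta(0,0)\,Du(0,0)$. This lets Proposition~\ref{prop3.1} apply verbatim (that proposition has no drift). Your variant, with $b_\beta(0,0)\cdot Dv$ inside the frozen operator, would require an extension of Proposition~\ref{prop3.1} (and of the H\"older estimate used for $w$) to operators with drift; plausible for $\sigma>1$, but not what the paper does.

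\textbf{Truncation.} The paper takes boundary data $g_M=\max(\min(u-p_0,M),-M)$, not $\tilde u$ itself. Since $p_0$ is affine in $x$, $u-p_0$ has linear growth and $[u-p_0]^x_{\alpha}=\infty$ on $\bR^d$; without truncation, existence of $v$ and the hypotheses of Proposition~\ref{prop3.1} are in jeopardy. The residual $h_M=\mathcal M^{\pm}(u-p_0-g_M)$ tends to $0$ as $M\to\infty$ and is absorbed into $C_k$.

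\textbf{Drift bookkeeping.} Lemma~\ref{lem2.2} is not the right tool for the drift error. In the paper the drift contribution to $C_k$ is
\[
b_\beta\,Du-b_\beta(0,0)\,Du(0,0),
\]
split as $\omega_b(2^{-k})\|Du\|_{L_\infty(Q_{2^{-k}})}+N_0\,2^{-k\alpha}[Du]_{\alpha/\sigma,\alpha;Q_{2^{-k}}}$. The first piece produces the $\omega_b(2^{-k})\|Du\|_{L_\infty(Q^{l+1})}$ term in \eqref{eq 3.13}, absorbed through Lemma~\ref{lem2.1}(ii) in the final iteration; the second, after summing in $k$, gives $C2^{k_0(\sigma-\alpha)-l\alpha}[Du]_{\alpha/\sigma,\alpha;Q^{l+1}}$, absorbed by interpolation---this is precisely why the paper imposes $\alpha<(\sigma-1)/2$. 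Lemma~\ref{lem2.2} is invoked only for the nonlocal coefficient error $(L_\beta-L_\beta(0,0))u$, to convert the pointwise Taylor-based seminorm $[u-p_{t_0,x_0}]^x_\alpha$ into $\inf_{p\in\mathcal P_x}[u-p]^x_\alpha$.

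With these three adjustments your sketch becomes the paper's proof.
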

\begin{proof}
For $k\in \mathbb{N}$, let $v_M$ be the solution of
\begin{equation*}
\begin{cases}
	\partial_tv_{M} = \inf_{\beta\in \mathcal{A}}\big(L_\beta(0,0)v_M+f_\beta(0,0)+b_\beta(0,0)Du(0,0)-\partial_t p_0\big)\quad \,\,\text{in}\,\,Q_{2^{-k}}\\
	v_M = g_M\qquad\qquad\qquad\qquad \,\,\text{in}\,\,\big((-2^{-k\sigma},0)\times B_{2^{-k}}^c\big)\cup \big(\{t=-2^{-k\sigma}\}\times B_{2^{-k}}\big),
\end{cases}
\end{equation*}
where $M\ge 2\|u-p_0\|_{L_\infty(Q_{2^{-k}})}$ is a constant to be specified later,
\begin{equation*}
g_M = \max(\min(u-p_0,M),-M),
\end{equation*}
and $p_0=p_0(t,x)$ is the first-order Taylor's expansion of $u^{(2^{-k})}$ at the origin.
By Proposition \ref{prop3.1}, we have
\begin{align}
	\nonumber
	&[v_M]_{1+\alpha/\sigma,\alpha+\sigma;Q_{2^{-k-1}}} \le C\sum_{j=1}^\infty 2^{(k-j)\sigma}M_j + C2^{k\sigma}[v_M]_{\alpha/\sigma,\alpha;Q_{2^{-k}}},
\end{align}
where $\alpha\in(0,\min\{\hat{\alpha},(\sigma-1)/2,2-\sigma\})$ and
\begin{align*}
M_j = \sup_{\substack{(t,x),(t',x')\in (-2^{-k\sigma},0)\times B_{2^{j-k}}\\(t,x)\neq(t',x'),\,0\le |x-x'|<2^{-k+1}}}\frac{|u(t,x)-p_0(t,x)-u(t',x^\prime)
+p_0(t',x')|}{|t-t'|^{\alpha/\sigma}+|x-x'|^\alpha}.
\end{align*}
From Lemma \ref{lem2.7} with $\sigma\in(1,2)$, it follows
\begin{align}
	\label{eq 1.311}
M_j \le C \inf_{p\in \mathcal{P}_1}[u-p]_{\alpha/\sigma,\alpha;(-2^{-k\sigma},0)\times B_{2^{j-k}}}.
\end{align}
In particular, for $j>k$, we have
\[
M_j \le C [u]_{\alpha/\sigma,\alpha;(-2^{-k\sigma},0)\times \bR^d},
\]
and thus,
\begin{align}
	\nonumber
	&[v_M]_{1+\alpha/\sigma,\alpha+\sigma;Q_{2^{-k-1}}} \le C\sum_{j=1}^\infty 2^{(k-j)\sigma}M_j + C2^{k\sigma}[v_M]_{\alpha/\sigma,\alpha;Q_{2^{-k}}}\\
	\label{eq 3.9}
	&\le C\sum_{j=1}^k 2^{(k-j)\sigma}M_j +  C [u]_{\alpha/\sigma,\alpha;(-2^{-k\sigma},0)\times \bR^d} + C2^{k\sigma}[v_M]_{\alpha/\sigma,\alpha;Q_{2^{-k}}}.
\end{align}
From \eqref{eq 3.9},  and the mean value formula (recalling $\alpha<2-\sigma$),
\begin{align*}
	&\|v_M-p_1\|_{L_\infty(Q_{2^{-k-k_0}})}\le C2^{-(k+k_0)(\sigma+\alpha)}\sum_{j=1}^k2^{(k-j)\sigma}M_j\\
	&\quad+C2^{-(k+k_0)(\sigma+\alpha)}[u]_{\alpha/\sigma,\alpha;(-2^{-k\sigma},0)\times \bR^d}+ C2^{-k\alpha-k_0(\sigma+\alpha)}[v_M]_{\alpha/\sigma,\alpha;Q_{2^{-k}}},
\end{align*}
where $p_1$ is the first-order Taylor's expansion of $v_M$ at the origin. The above inequality, \eqref{eq 3.9}, and the interpolation inequality imply
\begin{align}
	\nonumber
	&[v_M-p_1]_{\alpha/\sigma,\alpha;Q_{2^{-k-k_0}}} \le C2^{-(k+k_0)\sigma}\sum_{j=1}^k2^{(k-j)\sigma}M_j\\
	\label{eq 3.10} &\quad +C2^{-(k+k_0)\sigma}[u]_{\alpha/\sigma,\alpha;(-2^{-k\sigma},0)\times \bR^d}+C2^{-k_0\sigma}[v_M]_{\alpha/\sigma,\alpha;Q_{2^{-k}}}.
\end{align}
Next $w_M: = g_M-v_M$ satisfies
\begin{equation*}
\begin{cases}
\partial_t w_M\le \mathcal{M}^+w_M + h_M +C_k\quad &\text{in}\,\,Q_{2^{-k}}\\
\partial_t w_M\ge \mathcal{M}^- w_M + \hat{h}_M-C_k\quad &\text{in}\,\, Q_{2^{-k}}\\
w_M = 0\quad &\text{in}\,\, \big((-2^{-k\sigma},0)\times B_{2^{-k}}^c\big)\cup \big(\{t=-2^{-k\sigma}\}\times B_{2^{-k}}\big),
\end{cases}
\end{equation*}
where
\begin{equation*}
h_M: =\mathcal{M}^+\big(u-p_0-g_M\big),\quad \hat{h}_M: = \mathcal{M}^-\big(u-p_0-g_M\big).
\end{equation*}
Here
\begin{align*}
C_k = \sup_{\beta\in\mathcal{A}}\big\|f_\beta-f_\beta(0,0)+b_\beta Du-b_\beta(0,0)Du(0,0)+(L_\beta-L_\beta(0,0))u\big\|_{L_\infty(Q_{2^{-k}})}.
\end{align*}
It follows easily that
\begin{align*}
	C_k&\le \omega_f(2^{-k})+\omega_b(2^{-k})\|Du\|_{L_\infty(Q_{2^{-k}})}
+\sup_\beta\|b_\beta\|_{L_\infty}2^{-k\alpha}
[Du]_{\alpha/\sigma,\alpha;Q_{2^{-k}}}\\
	&\quad+C\omega_a(2^{-k})\Big(\sup_{(t_0,x_0)\in Q_{2^{-k}}}\sum_{j=0}^\infty 2^{j(\sigma-\alpha)}[u-p_{t_0,x_0}]^x_{\alpha;Q_{2^{-j}}(t_0,x_0)}+\|Du\|_{L_\infty(Q_{2^{-k}})}+\|u\|_{L_\infty}\Big),
\end{align*}
where $p_{t_0,x_0}=p_{t_0,x_0}(x)$ is the first-order Taylor's expansion of $u$ with respect to $x$ at $(t_0,x_0)$. From Lemma \ref{lem2.2}, we obtain
\begin{align*}
	C_k&\le \omega_f(2^{-k})+\omega_b(2^{-k})\|Du\|_{L_\infty(Q_{2^{-k}})}
+\sup_\beta\|b_\beta\|_{L_\infty}2^{-k\alpha}
[Du]_{\alpha/\sigma,\alpha;Q_{2^{-k}}}\\
	&\quad +C\omega_a(2^{-k})\Big(\sum_{j=0}^\infty 2^{j(\sigma-\alpha)}\sup_{(t_0,x_0)\in Q_{2^{-k}}}\inf_{p\in \mathcal{P}_x}[u-p]^x_{\alpha;Q_{2^{-j}}(t_0,x_0)}+\|Du\|_{L_\infty(Q_{2^{-k}})}+\|u\|_{L_\infty}\Big).
\end{align*}
By the dominated convergence theorem, it is easy to see that
\begin{equation*}
\|h_M\|_{L_\infty(Q_{2^{-k}})},\,\,\|\hat{h}_M\|_{L_\infty(Q_{2^{-k}})}\rightarrow 0\quad \text{as}\quad M\rightarrow \infty.
\end{equation*}
Thus similar to \eqref{eq 3.5}, choosing $M$ sufficiently large so that
\begin{equation*}
\|h_M\|_{L_\infty(Q_{2^{-k}})},\,\,\|\hat{h}_M\|_{L_\infty(Q_{2^{-k}})}\le C_k/2,
\end{equation*}
we have
\begin{align}
	\nonumber	\label{eq 3.11}
	&[w_M]_{\alpha/\sigma,\alpha;Q_{2^{-k}}}\\
	\nonumber
	&\le C2^{-k(\sigma-\alpha)}\Big(\omega_f(2^{-k})
+\big(\omega_b(2^{-k})+\omega_a(2^{-k})\big)
\|Du\|_{L_\infty(Q_{2^{-k}})}+2^{-k\alpha}[Du]_{\alpha/\sigma,\alpha;Q_{2^{-k}}}
	\\
	&\quad+\omega_a(2^{-k})\big(\sum_{j=0}^\infty2^{j(\sigma-\alpha)}\sup_{(t_0,x_0)\in Q_{2^{-k}}}\inf_{p\in \mathcal{P}_x}[u-p]^x_{\alpha;Q_{2^{-j}}(t_0,x_0)} + \|u\|_{L_\infty}\big)\Big).
\end{align}
Clearly,
\begin{align}
                                \label{eq1.48}
\inf_{p\in \mathcal{P}_x}[u-p]^x_{\alpha;Q_{2^{-j}}(t_0,x_0)}\le \inf_{p\in \mathcal{P}_1}[u-p]_{\alpha/\sigma,\alpha;Q_{2^{-j}}(t_0,x_0)}.
\end{align}
From the triangle inequality and Lemma \ref{lem2.7} with $j=0$,
\begin{align*}
	[v_M]_{\alpha/\sigma,\alpha;Q_{2^{-k}}} \le [w_M]_{\alpha/\sigma,\alpha;Q_{2^{-k}}} + [u-p_0]_{\alpha/\sigma,\alpha;Q_{2^{-k}}}\le[w_M]_{\alpha/\sigma,\alpha;Q_{2^{-k}}} + C\inf_{p\in \mathcal{P}_1}[u-p]_{\alpha/\sigma,\alpha;Q_{2^{-k}}}.
\end{align*}
We define for all $l= 1,2,\cdots$, that $Q^l=Q_{1-2^{-l}}$.
Combining \eqref{eq 3.10}, \eqref{eq 3.11} with \eqref{eq1.48}, and \eqref{eq 1.311}, similar to \eqref{eq 3.8}, we get that for all $l\ge 1$ and $k\ge l+1$,
\begin{align}
	&2^{(k+k_0)(\sigma-\alpha)}\sup_{(t_0,x_0)\in Q^l}\inf_{p\in \mathcal{P}_1}[u-p]_{\alpha/\sigma,\alpha;Q_{2^{-(k_0+k)}}(t_0,x_0)} \nonumber\\
	&\le C2^{-(k+k_0)\alpha}\sup_{(t_0,x_0)\in Q^{l}}\sum_{j=0}^k 2^{(k-j)\sigma}\inf_{p\in \mathcal{P}_1}[u-p]_{\alpha/\sigma,\alpha;(t_0-2^{-k\sigma},t_0)\times B_{2^{j-k}(x_0)}} \nonumber\\
	&\quad+ C2^{-(k+k_0)\alpha}[u]_{\alpha/\sigma,\alpha}+C 2^{-k\alpha+k_0(\sigma-\alpha)}[Du]_{\alpha/\sigma,\alpha;Q^{l+1}}\nonumber\\	
	&\quad + C2^{k_0(\sigma-\alpha)}\Big[\omega_f(2^{-k})+
\big(\omega_b(2^{-k})+\omega_a(2^{-k})\big)\|Du\|_{L_\infty(Q^{l+1})}\nonumber\\
	&\quad+\omega_a(2^{-k})\big(\sum_{j=0}^\infty 2^{j(\sigma-\alpha)}\sup_{(t_0,x_0)\in Q^{l+1}}\inf_{p\in \mathcal{P}_1}[u-p]_{\alpha/\sigma,\alpha;Q_{2^{-j}}(t_0,x_0)}+\|u\|_{L_\infty}\big)\label{eq 3.88}
\Big].
\end{align}
Summing the above inequality in $k = l+1,l+2,\ldots$ as before, we obtain
\begin{align}
	\nonumber
	&\sum_{k=l+1}^\infty 2^{(k+k_0)(\sigma-\alpha)}\sup_{(t_0,x_0)\in Q^{l}}\inf_{p\in \mathcal{P}_1}[u-p]_{\alpha/\sigma,\alpha;Q_{2^{-k-k_0}}(t_0,x_0)}\\
	\nonumber
	&\le C2^{-k_0\alpha}\sum_{j=0}^\infty 2^{j(\sigma-\alpha)}\sup_{(t_0,x_0)\in Q^{l+1}}\inf_{p\in \mathcal{P}_1}[u-p]_{\alpha/\sigma,\alpha;Q_{2^{-j}}(t_0,x_0)}\\
	\nonumber
	&\,\,+ C2^{-(k_0+l)\alpha}[u]_{\alpha/\sigma,\alpha}+C2^{k_0(\sigma-\alpha)}\sum_{k=l+1}^\infty2^{-k\alpha}[Du]_{\alpha/\sigma,\alpha;Q^{l+1}}\\
	\nonumber &\,\,+C2^{k_0(\sigma-\alpha)}\sum_{k=l+1}^\infty
\Big(\omega_f(2^{-k})+\big(\omega_b(2^{-k})
+\omega_a(2^{-k})\big)\|Du\|_{L_\infty(Q^{l+1})}\Big)\\
	\label{eq 3.13}
	&\,\,+C2^{k_0(\sigma-\alpha)}\sum_{k=l+1}^\infty\omega_a(2^{-k}) \big(\sum_{j=0}^\infty 2^{j(\sigma-\alpha)}\sup_{(t_0,x_0)\in Q^{l+1}}\inf_{p\in \mathcal{P}_1}[u-p]^x_{\alpha;Q_{2^{-j}}(t_0,x_0)}+\|u\|_{L_\infty}\big),
\end{align}
and
\begin{align}
	\nonumber
	&\sum_{j=0}^\infty 2^{j(\sigma-\alpha)}\sup_{(t_0,x_0)\in Q^{l}}\inf_{p\in \mathcal{P}_1}[u-p]_{\alpha/\sigma,\alpha;Q_{2^{-j}}(t_0,x_0)}\\
	\nonumber
	&\le C2^{(k_0+l)(\sigma-\alpha)}[u]_{\alpha/\sigma,\alpha}+ C2^{-k_0\alpha}\sum_{j=0}^\infty 2^{j(\sigma-\alpha)}\sup_{(t_0,x_0)\in Q^{l+1}}\inf_{p\in \mathcal{P}_1}[u-p]_{\alpha/\sigma,\alpha;Q_{2^{-j}}(t_0,x_0)}\\
	\nonumber &\,\,+C2^{k_0(\sigma-\alpha)-l\alpha}[Du]_{\alpha/\sigma,\alpha;Q^{l+1}}\\
	\nonumber &\,\,+C2^{k_0(\sigma-\alpha)}\sum_{k=l+1}^\infty
\Big(\omega_f(2^{-k})+\big(\omega_b(2^{-k})
+\omega_a(2^{-k})\big)\|Du\|_{L_\infty(Q^{l+1})}\Big)\\
	\nonumber
	&\,\,+C2^{k_0(\sigma-\alpha)}\sum_{k=l+1}^\infty\omega_a(2^{-k}) \big(\sum_{j=0}^\infty 2^{j(\sigma-\alpha)}\sup_{(t_0,x_0)\in Q^{l+1}}\inf_{p\in \mathcal{P}_1}[u-p]^x_{\alpha;Q_{2^{-j}}(t_0,x_0)}+\|u\|_{L_\infty}\big).
\end{align}
By choosing $k_0$ and $l$ sufficiently large, and using \eqref{eq 2.3a} and interpolation inequalities (recalling that $\alpha<(\sigma-1)/2$),  we obtain
\begin{align*}
&\sum_{j=0}^\infty 2^{j(\sigma-\alpha)}\sup_{(t_0,x_0)\in Q^l}\inf_{p\in \mathcal{P}_1}[u-p]_{\alpha/\sigma,\alpha;Q_{2^{-j}}(t_0,x_0)}\\
&\le \frac 14 \sum_{j=0}^\infty 2^{j(\sigma-\alpha)}\sup_{(t_0,x_0)\in Q^{l+1}}\inf_{p\in \mathcal{P}_1}[u-p]_{\alpha/\sigma,\alpha;Q_{2^{-j}}(t_0,x_0)}+C2^{(k_0+l)(\sigma-\alpha)}\|u\|_{\alpha/\sigma,\alpha}+C\sum_{k=1}^\infty \omega_f(2^{-k}).
\end{align*}
Therefore,
\begin{align}\label{eq 3.14}
\frac{1}{4^l}\sum_{j=0}^\infty 2^{j(\sigma-\alpha)}\sup_{(t_0,x_0)\in Q^l}\inf_{p\in \mathcal{P}_1}[u-p]_{\alpha/\sigma,\alpha;Q_{2^{-j}}(t_0,x_0)}\le C\|u\|_{\alpha/\sigma,\alpha}+C\sum_{k=1}^\infty \omega_f(2^{-k}),
\end{align}
which together with Lemma \ref{lem2.1} $(ii)$ gives \eqref{eq 8.18} and the continuity of $\partial_t u$.
\end{proof}

\subsection{The case when $\sigma = 1$}
\begin{proposition}
	\label{prop3.23}
Suppose that \eqref{eq1.1} is satisfied in $Q_{2}$. Then under the conditions of Theorem \ref{thm 1},
\begin{equation}
	\label{eq 8.17}
\|Du\|_{L_\infty(Q_{1/2})}+\|\partial_t u\|_{L_\infty(Q_{1/2})} \le C\|u\|_{\alpha,\alpha}+C\sum_{k=1}^\infty\omega_f(2^{-k}),
\end{equation}
where $C>0$ is a constant depending only on $d$, $\lambda$, $\Lambda$, $N_0$, $\omega_a$, and $\omega_b$. 
\end{proposition}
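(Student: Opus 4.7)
The argument parallels that of Proposition \ref{prop 3.22}, with the modifications needed because at the critical exponent $\sigma=1$ the drift term $bDu$ is of the same order as $L_\beta u$ and the auxiliary semi-norm $[Du]^t_{(\sigma-1)/\sigma}$ used there is unavailable. Fix $k\in\mathbb{N}$; let $p_0=p_0(t,x)$ be the first-order Taylor expansion of the joint $(t,x)$-mollification $u^{(2^{-k})}$ at the origin; take $M\ge 2\|u-p_0\|_{L_\infty(Q_{2^{-k}})}$; set $g_M:=\max(\min(u-p_0,M),-M)$; and let $v_M$ solve the frozen-coefficient equation
\begin{equation*}
\begin{cases}
\partial_t v_M = \inf_{\beta\in\mathcal{A}}\bigl(L_\beta(0,0) v_M + f_\beta(0,0)\bigr) + b(0,0)\cdot Du(0,0) - \partial_t p_0 & \text{in } Q_{2^{-k}},\\
v_M = g_M & \text{on the parabolic boundary,}
\end{cases}
\end{equation*}
which uses the fact that $b$ is independent of $\beta$ when $\sigma=1$. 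Since $K_\beta(0,0,\cdot)$ still satisfies \eqref{eq10.58}, Proposition \ref{prop3.1} applied to $v_M$ after rescaling, combined with Lemma \ref{lem2.7} (valid for $\sigma\in[1,2)$), gives, for $p_1$ the first-order Taylor expansion of $v_M$ at the origin and any integer $k_0\ge 1$,
\[
[v_M-p_1]_{\alpha,\alpha;Q_{2^{-k-k_0}}} \le C2^{-(k+k_0)}\sum_{j=1}^k 2^{k-j}\inf_{p\in\mathcal{P}_1}[u-p]_{\alpha,\alpha;(-2^{-k},0)\times B_{2^{j-k}}} + C2^{-(k+k_0)}[u]_{\alpha,\alpha} + C2^{-k_0}[v_M]_{\alpha,\alpha;Q_{2^{-k}}}.
\]

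Next, $w_M:=g_M-v_M$ satisfies Pucci-type inequalities with right-hand sides of the form $\pm C_k + h_M$, where $h_M\to 0$ as $M\to\infty$ by dominated convergence and
\[
C_k = \sup_\beta\bigl\|f_\beta - f_\beta(0,0) + bDu - b(0,0)Du(0,0) + (L_\beta - L_\beta(0,0))u\bigr\|_{L_\infty(Q_{2^{-k}})}.
\]
The interior H\"older estimate \cite[Lemma 2.5]{DZ16} gives $[w_M]_{\alpha,\alpha;Q_{2^{-k}}} \le C 2^{-k(1-\alpha)}C_k$. The $f$-part of $C_k$ is $\le \omega_f(2^{-k})$, and the kernel-perturbation part is bounded, exactly as in Proposition \ref{prop 3.22}, by
\[
C\omega_a(2^{-k})\Bigl(\sum_{j=0}^\infty 2^{j(1-\alpha)} \sup_{(t_0,x_0)\in Q_{2^{-k}}} \inf_{p\in\mathcal{P}_1}[u-p]_{\alpha,\alpha;Q_{2^{-j}}(t_0,x_0)} + \|Du\|_{L_\infty(Q_{2^{-k}})} + \|u\|_{L_\infty}\Bigr).
\]
The new ingredient is the drift contribution, which I split as
\[
|b(t,x)Du(t,x) - b(0,0)Du(0,0)| \le \omega_b(2^{-k})\|Du\|_{L_\infty(Q_{2^{-k}})} + N_0|Du(t,x) - Du(0,0)|.
\]
For the second summand, for which no direct bound is yet available, I use Lemma \ref{lem2.1}$(iii)$ to dominate the modulus of continuity of $Du$ at scale $2^{-k}$ by the tail $C\sum_{j\ge k} 2^{j(1-\alpha)}\sup\inf_p[u-p]_{\alpha,\alpha;Q_{2^{-j}}(\cdot)}$; the Dini integrability of $\omega_b$ then guarantees this term can be absorbed in the iteration.

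The iteration now proceeds as in Proposition \ref{prop 3.22}. Set $Q^l=Q_{1-2^{-l}}$ so $Q^1=Q_{1/2}$, translate the origin to an arbitrary $(t_0,x_0)\in Q^l$, take the supremum, and sum the resulting inequality over $k\ge l+1$ to derive a recursion for
\[
S_l := \sum_{j=0}^\infty 2^{j(1-\alpha)}\sup_{(t_0,x_0)\in Q^l}\inf_{p\in\mathcal{P}_1}[u-p]_{\alpha,\alpha;Q_{2^{-j}}(t_0,x_0)}
\]
of the form $S_l \le \tfrac14 S_{l+1} + C2^{(l+k_0)(1-\alpha)}\|u\|_{\alpha,\alpha} + C\sum_{k=1}^\infty \omega_f(2^{-k})$, obtained by first choosing $k_0$ large to absorb the factor $C2^{-k_0\alpha}$ and then $\ell_0$ (with $l\ge\ell_0$) large so that the $k\ge l+1$ tails of $\omega_a$ and $\omega_b$ are small. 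Multiplying by $4^{-l}$ and summing in $l$ gives
\[
\frac{1}{4^l}S_l \le C\|u\|_{\alpha,\alpha} + C\sum_{k=1}^\infty \omega_f(2^{-k}) \qquad\text{for all } l\ge \ell_0,
\]
so in particular $S_{\ell_0}$ is bounded, and Lemma \ref{lem2.1}$(iii)$ applied on $Q^{\ell_0}\supset Q_{1/2}$ then delivers \eqref{eq 8.17} together with the uniform continuity of $Du$ and $\partial_t u$. The main obstacle throughout is precisely the second step above: at $\sigma=1$ the quantity $|Du(t,x)-Du(0,0)|$ must be controlled in terms of the very semi-norms being estimated, forcing a self-referential bootstrap that closes only because of the Dini decay of $\omega_b$.
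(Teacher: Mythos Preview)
Your proposal has a genuine gap at the crucial step, precisely the one you flag as ``the main obstacle.'' When you split
\[
|b(t,x)Du(t,x)-b(0,0)Du(0,0)|\le \omega_b(2^{-k})\|Du\|_{L_\infty(Q_{2^{-k}})}+N_0|Du(t,x)-Du(0,0)|,
\]
the second summand carries the coefficient $N_0=\sup|b|$, \emph{not} $\omega_b$. Bounding $|Du(t,x)-Du(0,0)|$ by the tail $\sum_{j\ge k}2^{j(1-\alpha)}a_j$ (with $a_j=\sup\inf_p[u-p]_{\alpha,\alpha;Q_{2^{-j}}}$) via Lemma~\ref{lem2.1}$(iii)$ is correct, but after multiplying by $2^{k_0(1-\alpha)}$ and summing in $k\ge l+1$ you obtain
\[
C\,2^{k_0(1-\alpha)}N_0\sum_{k=l+1}^{\infty}\sum_{j\ge k}2^{j(1-\alpha)}a_j
=C\,2^{k_0(1-\alpha)}N_0\sum_{j=l+1}^{\infty}(j-l)\,2^{j(1-\alpha)}a_j,
\]
which is \emph{not} dominated by $S_{l+1}=\sum_j 2^{j(1-\alpha)}a_j$: the extra factor $(j-l)$ and the large prefactor $2^{k_0(1-\alpha)}N_0$ (recall $k_0$ must be taken large for the other absorptions) prevent this term from being absorbed. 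Your sentence ``the Dini integrability of $\omega_b$ then guarantees this term can be absorbed'' is simply false here---$\omega_b$ never multiplies this piece. This is exactly why the $\sigma>1$ argument, where the analogous term $N_0\,2^{-k\alpha}[Du]_{\alpha/\sigma,\alpha}$ can be interpolated below $[u]^x_\sigma$ because $1+\alpha<\sigma$, breaks down at $\sigma=1$.

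The paper handles this by a different device you have missed: a preliminary change of variables $\hat u(t,x)=u(t,x-b_0t)$ with $b_0=b(0,0)$, which converts the equation into $\partial_t\hat u=\inf_\beta(\hat L_\beta\hat u+\hat f_\beta)+(\hat b-b_0)D\hat u$. Now the entire drift coefficient is $\hat b-b_0$, of size $\le\omega_b((1+N_0)2^{-k})$ on $Q_{2^{-k}}$, so the frozen problem carries no drift at all and the error $C_k$ contains only $\omega_b(2^{-k})\|D\hat u\|_{L_\infty}$ from the drift---a term with a genuine Dini coefficient that \emph{can} be absorbed by choosing $l$ large. After this transformation the iteration closes exactly as in Proposition~\ref{prop 3.22}. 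Your freezing of $b(0,0)Du(0,0)$ in the auxiliary equation for $v_M$ is therefore the wrong move at $\sigma=1$; the drift must be removed to leading order by the Galilean shift before freezing.
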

\begin{proof}
Set $b_0 = b(0,0)$ and we define
\begin{align*}
\hat{u}(t,x) = u(t,x-b_0t),\quad\quad \hat{f_\beta}(t,x) = f_\beta(t,x-b_0t),\quad \text{and}\quad \hat{b}(t,x) = b(t,x-b_0t).
\end{align*}
It is easy to see that in  $Q_\delta$ for some $\delta>0$,
\begin{align*}
\partial_t\hat{u}(t,x) = \partial_tu(t,x-b_0t)-b_0\nabla u(t,x-b_0t),
\end{align*}
and for $(t,x)\in Q_{2^{-k}}$,
\begin{align*}
|\hat{f}_\beta(t,x)-\hat{f}_\beta(0,0)|\le \omega_f((1+N_0)2^{-k}),\\
|\hat b-b_0|\le \omega_b((1+N_0)2^{-k}).
\end{align*}
It follows immediately that
\begin{equation}
	\label{eq 2.221}
\hat{u}_t = \inf_\beta(\hat L_\beta \hat{u} + \hat{f}_\beta+ (\hat{b}-b_0)\nabla \hat{u}),
\end{equation}
where $\hat L$ is the operator with kernel $a(t,x-b_0t,y)|y|^{-d-\sigma}$. Furthermore,
\begin{align*}
\|Du\|_{L_\infty} + \|\partial_t u\|_{L_\infty} \le (1+N_0)\big(\|D\hat{u}\|_{L_\infty} + \|\partial_t\hat{u}\|_{L_\infty}\big).
\end{align*}
Therefore, it is sufficient to bound $\hat{u}$. In the rest of the proof,  we estimate
the solution to \eqref{eq 2.221} and abuse the notation to use $u$ instead of $\hat{u}$ for simplicity. By scaling, translation and covering arguments, we also assume $u$ satisfies the equation in $Q_2$.

The proof is similar to the case $\sigma  \in(1,2)$ and we indeed proceed as in the previous case.  Take $p_0$ to be the first-order Taylor's expansion of $u^{(2^{-k})}$ at the origin. 
We also assume that the solution $v$ to the following equation
\begin{equation*}
\begin{cases}
	\partial_tv = \inf_{\beta\in \mathcal{A}}(L_\beta(0,0)v+f_\beta(0,0)-\partial_t p_0)\quad \,\,&\text{in}\,\,Q_{2^{-k}}\\
	v = u-p_0\quad \,\,&\text{in}\,\,\big((-2^{-k\sigma},0)\times B_{2^{-k}}^c\big)\cup \big(\{t=-2^{-k\sigma}\}\times B_{2^{-k}}\big),
\end{cases}
\end{equation*}
 exists without carrying out another approximation argument.
By Proposition \ref{prop3.1} and Lemma \ref{lem2.7} with $\sigma= 1$,
\begin{align}
	\nonumber
	&[v]_{1+\alpha,1+\alpha;Q_{2^{-k-1}}}\le C\sum_{j=1}^\infty 2^{k-j}M_j+C2^k[v]_{\alpha,\alpha;Q_{2^{-k}}}\\
	\nonumber
	&\le C\sum_{j=1}^\infty 2^{k-j}\inf_{p\in \mathcal{P}_1}[u-p]_{\alpha,\alpha;(-2^{-k},0)\times B_{2^{j-k}}} + C2^k[v]_{\alpha,\alpha;Q_{2^{-k}}}\\
        \label{eq 3.16}
	&\le C\sum_{j=1}^k 2^{k-j}\inf_{p\in \mathcal{P}_1}[u-p]_{\alpha,\alpha;(-2^{-k},0)\times B_{2^{j-k}}}+ C[u]_{\alpha,\alpha}+C2^{k}[v]_{\alpha,\alpha;Q_{2^{-k}}}.
\end{align}
From \eqref{eq 3.16} and the interpolation inequality, we obtain
\begin{align}
	\nonumber
	&[v-p_1]_{\alpha,\alpha;Q_{2^{-k-k_0}}}\\
	\nonumber
	&\le C2^{-(k+k_0)}\sum_{j=1}^k 2^{k-j}\inf_{p\in \mathcal{P}_1}[u-p]_{\alpha,\alpha;(-2^{-k},0)\times B_{2^{j-k}}} \\
	\label{eq 3.16a}
	&\quad+C2^{-k_0}[v]_{\alpha,\alpha;Q_{2^{-k}}}+ C2^{-(k+k_0)}[u]_{\alpha,\alpha},
\end{align}
where $p_1$ is the first-order Taylor's expansion of $v$ at the origin. Next $w :=u-p_0-v$ satisfies \eqref{eq 3.4}, where by the cancellation property,
\begin{align*}
C_k \le &\omega_f((1+N_0)2^{-k})+\omega_{b}((1+N_0)2^{-k})\|Du\|_{L_\infty(Q_{2^{-k}})} + C\omega_a((1+N_0)2^{-k}) \\
&\cdot\Big(\sup_{(t_0,x_0)\in Q_{2^{-k}}}\sum_{j=0}^\infty 2^{j(1-\alpha)}\inf_{p\in \mathcal{P}_x}[u-p]^x_{\alpha; Q_{2^{-j}}(t_0,x_0)}+\|u\|_{L_\infty}\Big).
\end{align*}
Clearly, for any $r\ge 0$,
$$
\omega_\bullet ((1+N_0)r)\le (2+N_0)\omega_\bullet (r).
$$
Therefore, similar to \eqref{eq 3.5},  we have
\begin{align}
	\nonumber
	&[w]_{\alpha,\alpha;Q_{2^{-k}}}\le C2^{-k(1-\alpha)}\big( \omega_f(2^{-k})+\omega_b(2^{-k})\|Du\|_{L_\infty(Q_{2^{-k}})}\\
	\label{eq 3.16b}
	&+\omega_a(2^{-k})\Big(\sum_{j=0}^\infty 2^{j(1-\alpha)}\sup_{(t_0,x_0)\in Q_{2^{-k}}}\inf_{p\in \mathcal{P}_x}[u-p]^x_{\alpha; Q_{2^{-j}}(t_0,x_0)}+\|u\|_{L_\infty}\Big).
\end{align}
From \eqref{eq 2.19a} and the triangle inequality,
\begin{align*}
	[v]_{\alpha,\alpha;Q_{2^{-k}}}\le [w]_{\alpha,\alpha;Q_{2^{-k}}}+[u-p_0]_{\alpha,\alpha;Q_{2^{-k}}} \le [w]_{\alpha,\alpha;Q_{2^{-k}}}+ C\inf_{p\in \mathcal{P}_1}[u-p]_{\alpha,\alpha;Q_{2^{-k}}}.
\end{align*}
For all $l= 1,2,\cdots$, we define $Q^l=Q_{1-2^{-l}}$. Similar to \eqref{eq 3.8}, by combining \eqref{eq 3.16a} and \eqref{eq 3.16b}, shifting the coordinates, and using the above inequality, we obtain for all $l\ge 1$ and $k\ge l+1$,
\begin{equation}\label{eq:startingpoint}
\begin{split}
	&2^{(k+k_0)(1-\alpha)}\sup_{(t_0,x_0)\in Q^l}\inf_{p\in \mathcal{P}_1}[u-p]_{\alpha,\alpha;Q_{2^{-k-k_0}(t_0,x_0)}}\\
	&\le C2^{-(k+k_0)\alpha}\sup_{(t_0,x_0)\in Q^{l}}\sum_{j=0}^k 2^{k-j}\inf_{p\in \mathcal{P}_1}[u-p]_{\alpha,\alpha;(t_0-2^{-k},t_0)\times B_{2^{j-k}}(x_0)}\\
&\quad+C2^{k_0(1-\alpha)}\Big[\omega_f(2^{-k})
+\omega_b(2^{-k})\|Du\|_{L_\infty(Q^{l+1})}\\
	&\quad+\omega_a(2^{-k})\Big(\sum_{j=0}^\infty 2^{j(1-\alpha)}\sup_{(t_0,x_0)\in Q^{l+1}}\inf_{p\in \mathcal{P}_x}[u-p]^x_{\alpha, Q_{2^{-j}}(t_0,x_0)}+\|u\|_{L_\infty}\Big)\Big]\\
	&\quad+ C2^{-(k+k_0)\alpha}[u]_{\alpha,\alpha},
\end{split}
\end{equation}
which by summing in $k = l+1,l+2,\ldots$, implies that
\begin{align*}
	&\sum_{k=l+1}^\infty 2^{(k+k_0)(1-\alpha)}\sup_{(t_0,x_0)\in Q^l}\inf_{p\in \mathcal{P}_1}[u-p]_{\alpha,\alpha;Q_{2^{-k-k_0}}(t_0,x_0)}\\
	&\le C2^{-k_0\alpha}\sum_{j=0}^\infty 2^{j(1-\alpha)}\sup_{(t_0,x_0)\in Q^{l+1}}\inf_{p\in \mathcal{P}_1}[u-p]_{\alpha,\alpha;Q_{2^{-j}}(t_0,x_0)}\\
	&\quad+C2^{-(k_0+l)\alpha}[u]_{\alpha,\alpha}+C2^{k_0(1-\alpha)}
\sum_{k=l+1}^\infty\omega_f(2^{-k}) \\
	&\quad+ C2^{k_0(1-\alpha)}\sum_{k=l+1}^\infty
\Big[\omega_b(2^{-k})\|Du\|_{L_\infty(Q^{l+1})}+\omega_a(2^{-k})\\
	&\quad\cdot\Big(\sum_{j=0}^\infty 2^{j(1-\alpha)}\sup_{(t_0,x_0)\in Q^{l+1}}\inf_{p\in \mathcal{P}_x}[u-p]^x_{\alpha; Q_{2^{-j}}(t_0,x_0)}+\|u\|_{L_\infty}\Big)\Big],
\end{align*}
where for the first term on the right-hand side, we replaced $j$ by $k-j$, switched the order of the summation, and bounded it by
\begin{align*}
	&\sum_{k=0}^\infty 2^{-(k+k_0)\alpha}\sum_{j=0}^k 2^{j}\sup_{(t_0,x_0)\in Q^{l+1}}\inf_{p\in \mathcal{P}_1}[u-p]_{\alpha,\alpha;Q_{2^{-j}}(t_0,x_0)}\\
	&= 2^{-k_0\alpha}\sum_{j=0}^\infty 2^{j}\sup_{(t_0,x_0)\in Q^{l+1}}\inf_{p\in \mathcal{P}_1}[u-p]_{\alpha,\alpha;Q_{2^{-j}}(t_0,x_0)}\sum_{k=j}^\infty 2^{-k\alpha}\\
	&\le C2^{-k_0\alpha}\sum_{j=0}^\infty 2^{j(1-\alpha)}\sup_{(t_0,x_0)\in Q^{l+1}}\inf_{p\in \mathcal{P}_1}[u-p]_{\alpha,\alpha;Q_{2^{-j}}(t_0,x_0)}.
\end{align*}
Therefore,
\begin{equation}\label{eq:auxequal1}
\begin{split}
	&\sum_{j=0}^\infty 2^{j(1-\alpha)}\sup_{(t_0,x_0)\in Q^{l}}\inf_{p\in \mathcal{P}_1}[u-p]_{\alpha,\alpha;Q_{2^{-j}}(t_0,x_0)}\\
	&\le C2^{-k_0\alpha}\sum_{j=0}^\infty 2^{j(1-\alpha)}\sup_{(t_0,x_0)\in Q^{l+1}}\inf_{p\in \mathcal{P}_1}[u-p]_{\alpha,\alpha;Q_{2^{-j}}(t_0,x_0)}\\
	&\quad+C2^{(l+k_0)(1-\alpha)}[u]_{\alpha,\alpha}
+C2^{k_0(1-\alpha)}\sum_{k=l+1}^\infty \omega_f(2^{-k})\\
	&\quad +C2^{k_0(1-\alpha)}\sum_{k=l+1}^\infty\omega_b(2^{-k})\|Du\|_{L_\infty(Q^{l+1})}
+C2^{k_0(1-\alpha)}\sum_{k=l+1}^\infty \omega_a(2^{-k})\\
	&\quad\cdot\Big(\sum_{j=0}^\infty 2^{j(1-\alpha)}\sup_{(t_0,x_0)\in Q^{l+1}}\inf_{p\in \mathcal{P}_1}[u-p]_{\alpha,\alpha;Q_{2^{-j}}(t_0,x_0)}+\|u\|_{L_\infty}\Big).
\end{split}
\end{equation}
Then we choose $k_0$ and $l$ sufficiently large, and apply Lemma \ref{lem2.1} $(iii)$ to obtain
\begin{align*}
	&\sum_{j=0}^\infty 2^{j(1-\alpha)}\sup_{(t_0,x_0)\in Q^{l}}\inf_{p\in \mathcal{P}_1}[u-p]_{\alpha,\alpha;Q_{2^{-j}}(t_0,x_0)}\\
	&\le 	\frac 14 \sum_{j=0}^\infty 2^{j(1-\alpha)}\sup_{(t_0,x_0)\in Q^{l+1}}\inf_{p\in \mathcal{P}_1}[u-p]_{\alpha,\alpha;Q_{2^{-j}}(t_0,x_0)}+C2^{(l+k_0)(1-\alpha)}\|u\|_{\alpha/\sigma,\alpha}+C\sum_{k=1}^\infty \omega_f(2^{-k}),
\end{align*}
and thus,
\begin{align}\label{eq:controlnormlarge}
\frac{1}{4^l}\sum_{j=0}^\infty 2^{j(1-\alpha)}\sup_{(t_0,x_0)\in Q^{l}}\inf_{p\in \mathcal{P}_1}[u-p]_{\alpha,\alpha;Q_{2^{-j}}(t_0,x_0)}\le C\|u\|_{\alpha/\sigma,\alpha}+C\sum_{k=1}^\infty \omega_f(2^{-k}),
\end{align}
from which \eqref{eq 8.17} follows.
The proposition is proved.
\end{proof}

\subsection{Proof of Theorem \ref{thm 1}}
We use the localization argument to prove Theorem \ref{thm 1}.
\begin{proof}[Proof of Theorem \ref{thm 1}]
Without loss of generality, we assume the equation holds in $Q_3$. We divide the proof into three steps.

{\em Step 1.} For $k=1,2,\ldots$, denote $Q^k := Q_{1-2^{-k}}$.
Let $\eta_k\in C_0^\infty(\widehat Q^{k+3})$ be a sequence of nonnegative smooth cutoff functions satisfying
$\eta_k\equiv 1$ in $Q^{k+2}$, $|\eta_k|\le 1$ in $Q^{k+3}$,  $\|\partial_t^jD^i\eta_k\|_{L_\infty}\le C2^{k(i+j)}$ for each $i,j\ge 0$.  Set $v_k := u\eta_k\in C^{1,\sigma+}$ and notice that in $Q^{k+1}$,
\begin{align*}
\partial_tv_k&=\eta_k \partial_tu+\partial_t\eta_k u
=\inf_{\beta\in \mathcal{A}}(\eta_k L_\beta u+\eta_kb_\beta Du+\eta_k f_\beta+\partial_t\eta_k u)\\
&=\inf_{\beta\in \mathcal{A}}(L_\beta v_k+b_\beta Dv_k-b_\beta uD\eta_k+h_{k\beta}+\eta_k f_{\beta}+\partial_t\eta_k u),
\end{align*}
where
\begin{align*}
h_{k\beta}=\eta_k L_\beta u-L_\beta v_k=\int_{\bR^d}\frac{
\xi_k(t,x,y)a_\beta(t,x,y)}{|y|^{d+\sigma}}\,dy,
\end{align*}
and
\begin{equation*}
\begin{split}
\xi_k(t,x,y) &= u(t,x+y)(\eta_k(t,x+y)-\eta_k(t,x))-y\cdot D\eta_k(t,x)u(t,x)(\chi_{\sigma=1}\chi_{B_1}+\chi_{\sigma>1})\\
& =u(t,x+y)(\eta_k(t,x+y)-\eta_k(t,x))\quad\mbox{since }D\eta_k\equiv 0\mbox{ in } Q^{k+1}.
\end{split}
\end{equation*}

We will apply Proposition \ref{prop 3.22} to the equation of $v_k$ in $Q^{k+1}$ and obtain corresponding estimates for $v_k$ in $Q^k$.

Obviously, in $Q^{k+1}$ we have $\eta_k f_\beta\equiv f_\beta, b_\beta uD\eta_k\equiv 0$, and $\partial_t\eta_k u\equiv 0$. Thus, we only need to estimate the modulus of continuity of $h_{k\beta}$ in $Q^{k+1}$.

{\em Step 2.} For $(t,x)\in Q^{k+1}$ and $|y|\le 2^{-k-3}$, we have
\[
\xi_k(t,x,y) = 0.
\]
Also,
\begin{equation*}
\begin{split}
|\xi_k(t,x,y)| &= |u(t,x+y)(\eta_k(t,x+y)-\eta_k(t,x))|\\
&\le
\begin{cases}
2\omega_u(|y|)+2|u(t,x)|\quad\mbox{when }|y|\ge 1,\\
C2^k|u(t,x+y)| |y|\quad\mbox{when }2^{-k-3}<|y|< 1.\\
\end{cases}
\end{split}
\end{equation*}

For $(t,x),(t',x')\in Q^{k+1}$, by the triangle inequality,
\begin{align}
                                    \label{eq11.39}
&|h_{k\beta}(t,x)-h_{k\beta}(t',x')|\nonumber \\
&\le \int_{\bR^d}\frac{|(\xi_k(t,x,y)-\xi_k(t',x',y))
a_\beta(t,x,y)|}{|y|^{d+\sigma}}\nonumber\\
&\quad+\frac{|\xi_k(t',x',y)(a_\beta(t,x,y)-a_\beta(t',x',y))|}
{|y|^{d+\sigma}}\,dy
:= {\rm I}+{\rm II}.
\end{align}
By the estimates of $|\xi_k(t,x,y)|$ above, we have
\begin{equation}
                                \label{eq11.40}
{\rm II}\le C\Big(2^{k(\sigma+1)}\|u\|_{L_\infty(Q_2)}+\sum_{j=0}^\infty2^{-j\sigma}\omega_u(2^j)\Big)\omega_a(\max\{|x-x'|,|t-t'|^{1/\sigma}\}),
\end{equation}
where $C$ depends on $d$, $\sigma$, and  $\Lambda$. For ${\rm I}$, by the fundamental theorem of calculus,
\begin{align*}
\xi_k(t,x,y)-\xi_k(t',x',y) = y\cdot\int_0^1\Big(u(t,x+y)D\eta_k(t,x+sy) -u(t',x'+y)D\eta_k(t',x'+sy)\Big)\,ds.
\end{align*}
When $2^{-k-3}\le |y|< 2$, similar to the estimate of $\xi_k(t,x,y)$, it follows that
\begin{align}
             \nonumber
|\xi_k(t,x,y)-\xi_k(t',x',y)|&\le C|y|\big(2^{k}\omega_u(\max\{|x-x'|,|t-t'|^{1/\sigma}\})\\
	\label{eq11.41}
&\quad +2^{2k}\|u\|_{L_\infty(Q_3)}(|x-x'|+|t-t'|)\big).
\end{align}
When $|y|\ge 2$, we have
\[
|\xi_k(t,x,y)-\xi_k(t',x',y)|=|u(t,x+y)-u(t',x'+y)|\le \omega_u(\max\{|x-x'|,|t-t'|^{1/\sigma}\}),
\]
which implies
\[
{\rm I}\le C2^{k(\sigma+1)}\omega_u(\max\{|x-x'|,|t-t'|^{1/\sigma}\})
+2^{k(\sigma+2)}\|u\|_{L_\infty(Q_3)}(|x-x'|+|t-t'|)\big).
\]
Therefore,
\begin{align*}
|h_{k\beta}(t,x)-h_{k\beta}(t',x')|&\le \omega_h(\max\{|x-x'|,|t-t'|^{1/\sigma}\}),
\end{align*}
where
\begin{align}
                        \label{eq9.32}
\omega_h(r) &:= C\Big(2^{k(\sigma+1)}\|u\|_{L_\infty(Q_3)}+\sum_{j=0}^\infty2^{-j\sigma}\omega_u(2^j)\Big)\omega_a(r)\nonumber\\
&\quad +C2^{k(\sigma+1)}\omega_u(r)+C2^{k(\sigma+2)}\|u\|_{L_\infty(Q_3)}(r+r^\sigma)
\end{align}
is a Dini function.

{\em Step 3.}  In this last step, we only present the detailed proof for $\sigma\in (1,2)$. We omit the details for the proof of the case $\sigma\in(0,1]$, since it is almost the same as and actually even simpler than the case $\sigma\in(1,2)$.  We apply Proposition \ref{prop 3.22}, together with a scaling and covering argument, to $v_k$ to obtain
\begin{align*}
	&\|\partial_t v_k\|_{L_\infty(Q^k)}+[v_k]^x_{\sigma;Q^k}+[Dv_k]^t_{\frac{\sigma-1}{\sigma};Q^k}\\
	&\le C2^{k\sigma}\|v_k\|_{L_\infty}+C2^{k(\sigma-\alpha)}[v_k]_{\alpha/\sigma,\alpha}+C\sum_{j=1}^\infty \big(\omega_h(2^{-j})+\omega_f(2^{-j})\big)\\
	&\le  C2^{k(\sigma+2)}\|u\|_{L_\infty(Q_3)}+C_02^{k(\sigma-\alpha)}[u]_{\alpha/\sigma,\alpha;Q^{k+3}}+C\sum_{j=0}^\infty2^{-j\sigma}\omega_u(2^j)\\
	&\quad +C\sum_{j=0}^\infty\big(2^{k(\sigma+1)}\omega_u(2^{-j})
+\omega_f(2^{-j})\big),
\end{align*}
where $C$ and $C_0$ depend on $d$, $\lambda$, $\Lambda$, $\sigma$, $N_0$, $\omega_b$, and $\omega_a$, but independent of $k$. Since $\eta_k\equiv 1$ in $Q^k$,  it follows that
\begin{align}
		\nonumber
&\|\partial_t u\|_{L_\infty(Q^k)}+[u]^x_{\sigma;Q^{k}}+[Du]^t_{\frac{\sigma-1}{\sigma};Q^k}\\
&\le C2^{k(\sigma+2)}\|u\|_{L_\infty(Q_3)}+C_02^{k(\sigma-\alpha)}[u]_{\alpha/\sigma,\alpha;Q^{k+3}}+C\sum_{j=0}^\infty2^{-j\sigma}\omega_u(2^j)\nonumber \\
	&\quad +C\sum_{j=0}^\infty\big(2^{k(\sigma+1)}\omega_u(2^{-j})
+\omega_f(2^{-j})\big).\label{eq12.141}
\end{align}
By the interpolation inequality, for any $\epsilon\in(0,1)$,
\begin{align}
	\label{eq12.142}
[u]_{\alpha/\sigma, \alpha;Q^{k+3}} \le \epsilon(\|\partial_t u\|_{L_\infty(Q^{k+3})}+[u]^x_{\sigma;Q^{k+3}}) +C\epsilon^{-\frac{\alpha}{\sigma-\alpha}}\|u\|_{L_\infty(Q_3)}.
\end{align}
Recall that $\alpha \le \frac{\sigma-1}{2}$ and thus,
$$
\frac{\alpha}{\sigma-\alpha} \le \frac{\sigma-1}{\sigma+1}<1/2.
$$
Combining \eqref{eq12.141} and \eqref{eq12.142} with $\epsilon = C_0^{-1}2^{-3k-16}$, we obtain
\begin{align*}
&\|\partial_t u\|_{L_\infty(Q^k)}+[u]^x_{\sigma;Q^k}+[Du]^t_{\frac{\sigma-1}{\sigma};Q^k}\\
&\quad \le 2^{-16}([u]^x_{\sigma;Q^{k+3}}+\|\partial_t u\|_{L_\infty(Q^{k+3})}+[Du]^t_{\frac{\sigma-1}{\sigma};Q^{k+3}})\\
&\quad + C2^{4k}\|u\|_{L_\infty(Q_3)}+C\sum_{j=0}^\infty2^{-j\sigma}\omega_u(2^j) +C\sum_{j=0}^\infty\big(2^{k(\sigma+1)}\omega_u(2^{-j})
+\omega_f(2^{-j})\big).
\end{align*}
Then we multiply $2^{-5k}$ to both sides of the above inequality and get
\begin{align*}
	&2^{-5k}(\|\partial_t u\|_{L_\infty(Q^k)}+[u]^x_{\sigma;Q^k}+[Du]^t_{\frac{\sigma-1}{\sigma};Q^k})\\
	&\le 2^{-5(k+3)-1}(\|\partial_t u\|_{L_\infty(Q^{k+3})}+[u]^x_{\sigma;Q^{k+3}}+[Du]^t_{\frac{\sigma-1}{\sigma};Q^{k+3}})\\
& +C2^{-k}\|u\|_{L_\infty(Q_3)}+C2^{-2k}\sum_{j=0}^\infty\big(2^{-j\sigma}\omega_u(2^j) +\omega_u(2^{-j})+\omega_f(2^{-j})\big).
\end{align*}
We sum up the both sides of the above inequality  and obtain
\begin{align*}
	&\sum_{k=1}^\infty2^{-5k}(\|\partial_t u\|_{L_\infty(Q^k)}+[u]^x_{\sigma;Q^k}+[Du]^t_{\frac{\sigma-1}{\sigma};Q^k})\\
&\quad \le \frac{1}{2}\sum_{k=4}^\infty2^{-5k}(\|\partial_t u\|_{L_\infty(Q^{k})}+[u]^x_{\sigma;Q^{k}}
+[Du]^t_{\frac{\sigma-1}{\sigma};Q^{k}})\\
	&\quad +C\|u\|_{L_\infty(Q_3)}+C\sum_{j=0}^\infty\big(2^{-j\sigma}\omega_u(2^j) +\omega_u(2^{-j})+\omega_f(2^{-j})\big),
\end{align*}
which further implies that
\begin{align*}
&\sum_{k=1}^\infty2^{-5k}(\|\partial_t u\|_{L_\infty(Q^k)}+[u]^x_{\sigma;Q^k}+[Du]^t_{\frac{\sigma-1}{\sigma};Q^k})\\
&\quad \le C\|u\|_{L_\infty(Q_3)}+C\sum_{j=0}^\infty\big(2^{-j\sigma}\omega_u(2^j) +\omega_u(2^{-j})+\omega_f(2^{-j})\big),
\end{align*}
where $C$ depends on $d$, $\lambda$, $\Lambda$, $\sigma$, $\omega_b$, $N_0$, and $\omega_a$.
By applying this estimate to $u-u(0,0)$, we obtain
\begin{align}	\label{eq12.09}
\|\partial_t u\|_{L_\infty(Q^4)}+[u]^x_{\sigma;Q^4}+[Du]^t_{\frac{\sigma-1}{\sigma};Q^4}\le C\sum_{j=0}^\infty\big(2^{-j\sigma}\omega_u(2^j) +\omega_u(2^{-j})+\omega_f(2^{-j})\big).
\end{align}
This proves \eqref{eq12.17}.

Finally, since $\|v_1\|_{\alpha/\sigma,\alpha}$ is bounded by the right-hand side of \eqref{eq12.09}, from \eqref{eq 3.14}, we see that
\begin{align*}
\sum_{j=0}^\infty 2^{j(\sigma-\alpha)}\sup_{(t_0,x_0)\in Q^l}\inf_{p\in \cP_1}[v_1-p]_{\alpha/\sigma,\alpha;Q_{2^{-j}}(t_0,x_0)}\le C
\end{align*}
for some large $l$.
This and \eqref{eq 3.13} with $u$ replaced by $v_1$ and $f_\beta$ replaced by $h_{1\beta}+\eta_1 f_{\beta}+\partial_t\eta_1 u-b_\beta u D\eta_1$ give
\begin{align*}
&\sum_{j=k_1+1}^\infty 2^{(j+k_0)(\sigma-\alpha)}\sup_{(t_0,x_0)\in Q^{k_1}}\inf_{p\in \cP_1}[v_{1}-p]_{\alpha/\sigma,\alpha;Q_{2^{-j-k_0}}(t_0,x_0)}\\
&\le
C2^{-k_0\alpha}+C2^{k_0(\sigma-\alpha)}
\sum_{j=k_1}^\infty \big(\omega_f(2^{-j})+\omega_a(2^{-j})+\omega_u(2^{-j})
+\omega_b(2^{-j})+2^{-j\alpha}\big).
\end{align*}
Here we also used \eqref{eq9.32} with $k=1$.
Therefore, for any small $\varepsilon>0$, we can find $k_0$ sufficiently large then $k_1$ sufficiently large, depending only on $C$, $\sigma$, $N_0$, $\alpha$, $\omega_f$, $\omega_a$, $\omega_f$, $\omega_b$, and $\omega_u$, such that
$$
\sum_{j=k_1+1}^\infty 2^{(j+k_0)(\sigma-\alpha)}\sup_{(t_0,x_0)\in Q^{k_1}}\inf_{p\in \cP_1}[v_1-p]_{\alpha/\sigma,\alpha;Q_{2^{-j-k_0}}(t_0,x_0)}<\varepsilon,
$$
which, together with the fact that $v_1 = u$ in $Q_{1/2}$ and the proof of Lemma \ref{lem2.1} (ii), indicates that
$$
\sup_{(t_0,x_0)\in Q_{1/2}} \Big([u]^x_{\sigma;Q_r(t_0,x_0)}
+[Du]^t_{\frac{\sigma-1}{\sigma};Q_{r}(t_0,x_0)}\Big)\to 0 \quad\text{as}\quad r\to 0
$$
with a decay rate depending  only on $d$, $\lambda$, $N_0$, $\Lambda$, $\omega_a$, $\omega_f$, $\omega_b$, $\omega_u$, and $\sigma$. Hence, the proof of the case when $\sigma\in (1,2)$ is completed.
\end{proof}

\section{Schauder estimates for equations with drifts}\label{sec:schauder}
In this section, we are going to prove Theorems \ref{thm:schauder} and  \ref{thm:linearschauder}. Here, the main difference from the theorems in \cite{DZ16} is that our equation may have a drift, especially for $\sigma=1$. 

We first prove a weaker version of Theorem \ref{thm:schauder}.

\begin{proposition}
	\label{prop:global1}
Suppose that \eqref{eq1.1} is satisfied in $Q_2$. Then under the conditions of Theorem \ref{thm:schauder}, for any $\gamma\in (0,\min\{\hat\alpha,2-\sigma\})$ with $\hat\alpha$ being the one in Proposition \ref{prop3.1}, and any $\alpha\in (\gamma,\min\{\hat\alpha,2-\sigma\})$, we have
\begin{equation*}
[u]_{1+\gamma/\sigma,\sigma+\gamma;Q_{1/2}} \le C(\|u\|_{\alpha/\sigma,\alpha}+C_f),
\end{equation*}
where $C>0$ is a constant depending only on $d,\gamma, \alpha$, $\sigma, \lambda$, $\Lambda$, $N_0$, and $C_b$. 
\end{proposition}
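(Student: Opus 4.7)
The plan is to parallel the proof of Theorem \ref{thm 1} given in Propositions \ref{prop3.21}--\ref{prop3.23}, but with every Dini modulus $\omega_a,\omega_b,\omega_f$ replaced by the power function $r^\gamma$. In the Dini argument, the key estimates of the form \eqref{eq:thetrick}, \eqref{eq 3.14}, and \eqref{eq:controlnormlarge} control only the weighted sum of $\inf_{p\in\cP_1}[u-p]_{\alpha/\sigma,\alpha;Q_{2^{-k}}(t_0,x_0)}$. For the H\"older case, the analogous iteration yields a \emph{pointwise} geometric decay of order $2^{-k(\sigma+\gamma-\alpha)}$ for each individual quantity $\inf_{p\in\cP_1}[u-p]_{\alpha/\sigma,\alpha;Q_{2^{-k}}(t_0,x_0)}$, which is exactly a Campanato-type characterization of the desired regularity $C^{1+\gamma/\sigma,\sigma+\gamma}$. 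The reason this stronger pointwise decay is available here is that $r^\gamma$ sums geometrically against the factor $2^{k(\sigma-\alpha)}$ provided $\gamma<\alpha$, which is precisely the standing hypothesis.

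Concretely, for each $(t_0,x_0)\in Q_{1/2}$ and each $k\in\mathbb{N}$ I would freeze the kernel and drift at $(t_0,x_0)$ and solve the Dirichlet problem for $v$ on $Q_{2^{-k}}(t_0,x_0)$ with exterior data $u-p_0$, where $p_0$ is the first-order Taylor expansion of the mollification $u^{(2^{-k})}$ at $(t_0,x_0)$ (as in Lemmas \ref{lem2.4} and \ref{lem2.7}). Applying Proposition \ref{prop3.1} to $v$ after scaling, together with a standard interpolation argument, produces the analog of \eqref{eq 3.10}:
\[
[v-p_1]_{\alpha/\sigma,\alpha;Q_{2^{-k-k_0}}(t_0,x_0)}\le C2^{-(k+k_0)\sigma}\sum_{j=1}^k 2^{(k-j)\sigma}M_j+C2^{-(k+k_0)\sigma}[u]_{\alpha/\sigma,\alpha}+C2^{-k_0\sigma}[v]_{\alpha/\sigma,\alpha;Q_{2^{-k}}(t_0,x_0)}.
\]
The difference $w:=u-p_0-v$ satisfies Pucci inequalities with right-hand side bounded, using \eqref{eq:modulusofcontinuity2} and Lemma \ref{lem2.2}, by
\[
C_k\le C 2^{-k\gamma}\bigl(C_f+\|Du\|_{L_\infty}+\|u\|_{\alpha/\sigma,\alpha}+\text{tail terms}\bigr),
\]
so Lemma 2.5 in \cite{DZ16} yields $[w]_{\alpha/\sigma,\alpha;Q_{2^{-k}}(t_0,x_0)}\le C 2^{-k(\sigma+\gamma-\alpha)}\bigl(C_f+\|u\|_{\alpha/\sigma,\alpha}+\text{tail terms}\bigr)$.

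Combining these two bounds and using $[v]_{\alpha/\sigma,\alpha;Q_{2^{-k}}}\le[w]_{\alpha/\sigma,\alpha;Q_{2^{-k}}}+C\inf_{p\in\cP_1}[u-p]_{\alpha/\sigma,\alpha;Q_{2^{-k}}}$ (via Lemma \ref{lem2.7}), I obtain a recursion analogous to \eqref{eq 3.88} and \eqref{eq:startingpoint}. Multiplying by $2^{(k+k_0)(\sigma+\gamma-\alpha)}$ and carrying out the same absorption/summation scheme (choose $k_0$ large so that $C2^{-k_0\gamma}\le 1/2$ absorbs the dangerous tail contribution, and restrict to nested subdomains $Q^l\Subset Q^{l+1}\subset Q_{1/2}$ to deal with the global tail semi-norms), the geometric series $\sum 2^{-k(\alpha-\gamma)}$ now converges without any Dini hypothesis and gives
\[
\sup_{k\ge 0}2^{k(\sigma+\gamma-\alpha)}\sup_{(t_0,x_0)\in Q^l}\inf_{p\in\cP_1}[u-p]_{\alpha/\sigma,\alpha;Q_{2^{-k}}(t_0,x_0)}\le C\bigl(\|u\|_{\alpha/\sigma,\alpha}+C_f\bigr).
\]
An argument in the spirit of Lemma \ref{lem2.1} converts this bound into the claimed $[u]_{1+\gamma/\sigma,\sigma+\gamma;Q_{1/2}}$ estimate.

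The main obstacle is the case $\sigma=1$, where the drift has the same order as the nonlocal operator and cannot be absorbed as a lower-order perturbation. Following the beginning of the proof of Proposition \ref{prop3.23}, I would first perform the change of variables $\hat u(t,x)=u(t,x-b_0 t)$ with $b_0=b(0,0)$, which converts the equation to one whose drift coefficient $\hat b-b_0$ has H\"older size $r^\gamma$ on $Q_r$. The secondary difficulty is that the $\|Du\|_{L_\infty}$ term in $C_k$ must be absorbed back into the left-hand side of the iteration; this requires an interpolation inequality of the form $\|Du\|_{L_\infty(Q^{l+1})}\le \epsilon[u]_{1+\gamma,1+\gamma;Q^{l+1}}+C_\epsilon\|u\|_{\alpha,\alpha}$, together with the same nested-cylinder bookkeeping used in the Dini case, to avoid circularity.
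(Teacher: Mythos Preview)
Your proposal is correct and follows the same route as the paper: start from the recursions \eqref{eq 3.8}, \eqref{eq 3.88}, \eqref{eq:startingpoint} already derived in the Dini proofs, multiply through by $2^{(k+k_0)\gamma}$, take the supremum in $k$, and absorb the history term using $\gamma<\alpha$ (minor slip: the absorbable coefficient is $2^{k_0(\gamma-\alpha)}$, not $2^{-k_0\gamma}$). The one place the paper is cleaner than your sketch is the handling of the tail sum $\sum_{j\ge 0}2^{j(\sigma-\alpha)}\sup_{Q^{l+1}}\inf_p[u-p]$ and the terms $\|Du\|_{L_\infty(Q^{l+1})}$, $[Du]_{\alpha/\sigma,\alpha;Q^{l+1}}$: instead of a further nested-cylinder iteration or the interpolation $\|Du\|_{L_\infty}\le \epsilon[u]_{1+\gamma,1+\gamma}+C_\epsilon\|u\|_{\alpha,\alpha}$ you propose, the paper simply quotes the Dini conclusions \eqref{eq:thetrick}, \eqref{eq 3.14}, \eqref{eq:controlnormlarge} together with Lemma~\ref{lem2.1}---which apply verbatim since $r^\gamma$ is a Dini function---and then fixes $l$ once. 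This avoids any circularity and makes the $\sigma=1$ case no harder than the others after the change of variables you correctly identify.
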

\begin{proof}
The proof is very similar to that of Propositions  \ref{prop3.21}, \ref{prop 3.22}, and \ref{prop3.23}. We fix an $\alpha\in (\gamma,\hat\alpha)$.

\textbf{Case 1:} $\sigma\in(0,1)$. We start from \eqref{eq 3.8}. Let $Q^l$ and $\ell_0$ be as in the proof of Proposition \ref{prop3.21}. Multiplying $2^{(k+k_0)\gamma}$ to both sides of \eqref{eq 3.8} and making use of the H\"older continuity of $a$ and $f$, we have for all $l\ge \ell_0$ and $k\ge l+1$,
\begin{align}
	\nonumber
&2^{(k+k_0)(\sigma+\gamma-\alpha)}\sup_{(t_0,x_0)\in Q^l}\inf_{p\in\mathcal{P}_t}[u-p]_{\alpha/\sigma,\alpha;Q_{2^{-k-k_0}}(t_0,x_0)}\\
	\nonumber
&\le  C2^{(k+k_0)(\gamma-\alpha)}\sup_{(t_0,x_0)\in Q^l}\sum_{j=0}^k2^{(k-j)\sigma}\inf_{p\in \mathcal{P}_t}[u-p]_{\alpha/\sigma,\alpha;(t_0-2^{-k\sigma},t_0)\times B_{2^{j-k}}(x_0)}\\\nonumber
&\quad+C2^{(k+k_0)(\gamma-\alpha)}[u]_{\alpha/\sigma,\alpha}+C2^{k_0(\sigma+\gamma-\alpha)}\Big[ C_f+\\
&\quad\quad \sup_{(t_0,x_0)\in Q^{l+1}}\sum_{j=0}^\infty 2^{j(\sigma-\alpha)}\inf_{p\in \mathcal{P}_t}[u-p]_{\alpha/\sigma,\alpha;Q_{2^{-j}}(t_0,x_0)}	+\|u\|_{L_\infty}\Big].
	\nonumber
\end{align}
Taking the supremum in $k\ge\ell_0+1$ and using the fact that $\gamma<\alpha$, we have
\begin{align}
	\nonumber
&\sup_{k\ge \ell_0+k_0+1}2^{k(\sigma+\gamma-\alpha)}\sup_{(t_0,x_0)\in Q^l}\inf_{p\in\mathcal{P}_t}[u-p]_{\alpha/\sigma,\alpha;Q_{2^{-k}}(t_0,x_0)}\\
	\nonumber
&\le  C2^{k_0(\gamma-\alpha)}\sup_{k\ge 0}2^{k(\sigma+\gamma-\alpha)}\sup_{(t_0,x_0)\in Q^l}\inf_{p\in \mathcal{P}_t}[u-p]_{\alpha/\sigma,\alpha;Q_{2^{-k}}(x_0)}\\\nonumber
&\quad+C2^{(\ell_0+k_0+1)(\gamma-\alpha)}[u]_{\alpha/\sigma,\alpha}+C2^{k_0(\sigma+\gamma-\alpha)}\Big[ C_f+\\
&\quad\quad \sup_{(t_0,x_0)\in Q^{l+1}}\sum_{j=0}^\infty 2^{j(\sigma-\alpha)}\inf_{p\in \mathcal{P}_t}[u-p]_{\alpha/\sigma,\alpha;Q_{2^{-j}}(t_0,x_0)}	+\|u\|_{L_\infty}\Big].
	\nonumber
\end{align}
By taking $k_0$ large, $l=\ell_0$,  using \eqref{eq:thetrick}, and noticing that
\[
\sup_{0\le k\le \ell_0+k_0} 2^{k(\sigma+\gamma-\alpha)}\sup_{(t_0,x_0)\in Q^{l}}\inf_{p\in \mathcal{P}_t}[u-p]_{\alpha/\sigma,\alpha;Q_{2^{-k}(t_0,x_0)}}\le C 2^{(\ell_0+k_0)(\sigma+\gamma-\alpha)}[u]_{\alpha/\sigma,\alpha},
\]
we have
\[
\sup_{k\ge 0}2^{k(\sigma+\gamma-\alpha)}\sup_{(t_0,x_0)\in Q_{1/2}}\inf_{p\in\mathcal{P}_t}[u-p]_{\alpha/\sigma,\alpha;Q_{2^{-k}}(t_0,x_0)}\le C\Big[C_f+\|u\|_{\alpha/\sigma,\alpha}\Big].
\]
Since
\[
[u]_{1+\gamma/\sigma,\sigma+\gamma;Q_{1/2}} \le C\sup_{k\ge 0}2^{k(\sigma+\gamma-\alpha)}\sup_{(t_0,x_0)\in Q_{1/2}}\inf_{p\in\mathcal{P}_t}[u-p]_{\alpha/\sigma,\alpha;Q_{2^{-k}}(t_0,x_0)}+C[u]_{\alpha/\sigma,\alpha},
\]
we obtain
\[
[u]_{1+\gamma/\sigma,\sigma+\gamma;Q_{1/2}} \le C(\|u\|_{\alpha/\sigma,\alpha}+C_f).
\]

\textbf{Case 2:} $\sigma\in(1,2)$. We start from \eqref{eq 3.88}. Let $Q^l$ be as in the proof of Proposition \ref{prop 3.22}. Multiplying $2^{(k+k_0)\gamma}$ to both sides of \eqref{eq 3.88} and making use of the H\"older continuity of $a$, $b$, and $f$, we have for all $l\ge 1$ and $k\ge l+1$,
\begin{align}
	\nonumber
&2^{(k+k_0)(\sigma+\gamma-\alpha)}\sup_{(t_0,x_0)\in Q^l}\inf_{p\in\mathcal{P}_1}[u-p]_{\alpha/\sigma,\alpha;Q_{2^{-k-k_0}}(t_0,x_0)}\\
	\nonumber
&\le  C2^{(k+k_0)(\gamma-\alpha)}\sup_{(t_0,x_0)\in Q^l}\sum_{j=0}^k2^{(k-j)\sigma}\inf_{p\in \mathcal{P}_1}[u-p]_{\alpha/\sigma,\alpha;(t_0-2^{-k\sigma},t_0)\times B_{2^{j-k}}(x_0)}\\\nonumber
&\quad+C2^{(k+k_0)(\gamma-\alpha)}[u]_{\alpha/\sigma,\alpha}+C2^{(k+k_0)(\gamma-\alpha)+k_0\sigma}[Du]_{\alpha/\sigma,\alpha;Q^{l+1}}+C2^{k_0(\sigma+\gamma-\alpha)}\Big[ C_f+\\
&\quad\quad\|Du\|_{L_\infty(Q^{l+1})}+ \sup_{(t_0,x_0)\in Q^{l+1}}\sum_{j=0}^\infty 2^{j(\sigma-\alpha)}\inf_{p\in \mathcal{P}_1}[u-p]_{\alpha/\sigma,\alpha;Q_{2^{-j}}(t_0,x_0)}	+\|u\|_{L_\infty}\Big].
	\nonumber
\end{align}
Note that this $\hat\alpha$ can be chosen very small, at least strictly smaller than $\sigma-1$.  Taking the supremum in $k\ge 2$  and using the fact that $\gamma<\alpha$, we have
\begin{align}
	\nonumber
&\sup_{k\ge k_0+2}2^{k(\sigma+\gamma-\alpha)}\sup_{(t_0,x_0)\in Q^l}\inf_{p\in\mathcal{P}_1}[u-p]_{\alpha/\sigma,\alpha;Q_{2^{-k}}(t_0,x_0)}\\
	\nonumber
&\le  C2^{k_0(\gamma-\alpha)}\sup_{k\ge 0}2^{k(\sigma+\gamma-\alpha)}\sup_{(t_0,x_0)\in Q^l}\inf_{p\in \mathcal{P}_1}[u-p]_{\alpha/\sigma,\alpha;Q_{2^{-k}}(x_0)}\\\nonumber
&\quad+C2^{(k_0+2)(\gamma-\alpha)}[u]_{\alpha/\sigma,\alpha}+C2^{(2+k_0)(\gamma-\alpha)+k_0\sigma}[Du]_{\alpha/\sigma,\alpha;Q^{l+1}}+C2^{k_0(\sigma+\gamma-\alpha)}\Big[ C_f+\\
&\quad\quad\|Du\|_{L_\infty(Q^{l+1})}+ \sup_{(t_0,x_0)\in Q^{l+1}}\sum_{j=0}^\infty 2^{j(\sigma-\alpha)}\inf_{p\in \mathcal{P}_1}[u-p]_{\alpha/\sigma,\alpha;Q_{2^{-j}}(t_0,x_0)}	+\|u\|_{L_\infty}\Big].
	\nonumber
\end{align}
By taking $k_0$ large, $l=1$, using \eqref{eq 3.14} and \eqref{eq 2.3a}, and noticing that
\[
\sup_{0\le k\le 1+k_0} 2^{k(\sigma+\gamma-\alpha)}\sup_{(t_0,x_0)\in Q^{l}}\inf_{p\in \mathcal{P}_1}[u-p]_{\alpha/\sigma,\alpha;Q_{2^{-k}(t_0,x_0)}}\le C 2^{(1+k_0)(\sigma+\gamma-\alpha)}[u]_{\alpha/\sigma,\alpha},
\]
we have
\[
\sup_{k\ge 0}2^{k(\sigma+\gamma-\alpha)}\sup_{(t_0,x_0)\in Q_{1/2}}\inf_{p\in\mathcal{P}_1}[u-p]_{\alpha/\sigma,\alpha;Q_{2^{-k}}(t_0,x_0)}\le C\Big[C_f+\|u\|_{\alpha/\sigma,\alpha}\Big].
\]
Since
\[
[u]_{1+\gamma/\sigma,\sigma+\gamma;Q_{1/2}} \le C\sup_{k\ge 0}2^{k(\sigma+\gamma-\alpha)}\sup_{(t_0,x_0)\in Q_{1/2}}\inf_{p\in\mathcal{P}_1}[u-p]_{\alpha/\sigma,\alpha;Q_{2^{-k}}(t_0,x_0)}+C[u]_{\alpha/\sigma,\alpha},
\]
we obtain
\[
[u]_{1+\gamma/\sigma,\sigma+\gamma;Q_{1/2}} \le C(\|u\|_{\alpha/\sigma,\alpha}+C_f).
\]

\textbf{Case 3:} $\sigma=1$. We start from \eqref{eq:startingpoint}. Multiplying $2^{(k+k_0)\gamma}$ to both sides of \eqref{eq:startingpoint} and making use of the H\"older continuity of $a,b,f$, we have for all $l\ge 1$ and $k\ge l+1$,
\begin{align*}
	&2^{(k+k_0)(1+\gamma-\alpha)}\sup_{(t_0,x_0)\in Q^l}\inf_{p\in \mathcal{P}_1}[u-p]_{\alpha,\alpha;Q_{2^{-k-k_0}(t_0,x_0)}}\\
	&\le C2^{(k+k_0)(\gamma-\alpha)}\sup_{(t_0,x_0)\in Q^{l}}\sum_{j=0}^k 2^{k-j}\inf_{p\in \mathcal{P}_1}[u-p]_{\alpha,\alpha;(t_0-2^{-k},t_0)\times B_{2^{j-k}}(x_0)}\\
&\quad+C2^{k_0(1+\gamma-\alpha)}\Big[C_f
+\|Du\|_{L_\infty(Q^{l+1})}\\
	&\quad+\sum_{j=0}^\infty 2^{j(1-\alpha)}\sup_{(t_0,x_0)\in Q^{l+1}}\inf_{p\in \mathcal{P}_x}[u-p]^x_{\alpha, Q_{2^{-j}}(t_0,x_0)}+\|u\|_{L_\infty}\Big]\\
	&\quad+ C2^{(k+k_0)(\gamma-\alpha)}[u]_{\alpha,\alpha}.
\end{align*}
Taking the supremum in $k\ge 2$ and using the fact that $\gamma<\alpha$, we have
\begin{align*}
	&\sup_{k\ge k_0+2} 2^{k(1+\gamma-\alpha)}\sup_{(t_0,x_0)\in Q^l}\inf_{p\in \mathcal{P}_1}[u-p]_{\alpha,\alpha;Q_{2^{-k}(t_0,x_0)}}\\
	&\le C2^{(k_0-1)(\gamma-\alpha)}\sup_{k\ge 0} 2^{k(1+\gamma-\alpha)}\sup_{(t_0,x_0)\in Q^{l}}\inf_{p\in \mathcal{P}_1}[u-p]_{\alpha,\alpha;Q_{2^{-k}(t_0,x_0)}}\\
&\quad+C2^{k_0(1+\gamma-\alpha)}\Big[C_f
+\|Du\|_{L_\infty(Q^{l+1})}\\
	&\quad+\sum_{j=0}^\infty 2^{j(1-\alpha)}\sup_{(t_0,x_0)\in Q^{l+1}}\inf_{p\in \mathcal{P}_x}[u-p]^x_{\alpha, Q_{2^{-j}}(t_0,x_0)}+\|u\|_{L_\infty}\Big]\\
	&\quad+ C2^{k_0(\gamma-\alpha)}[u]_{\alpha,\alpha}.
\end{align*}
By taking $k_0$ large, $l=1$, using \eqref{eq:controlnormlarge}, and noticing that
\[
\sup_{0\le k\le k_0+1} 2^{k(1+\gamma-\alpha)}\sup_{(t_0,x_0)\in Q^{l}}\inf_{p\in \mathcal{P}_1}[u-p]_{\alpha,\alpha;Q_{2^{-k}(t_0,x_0)}}\le C 2^{k_0(1+\gamma-\alpha)}[u]_{\alpha,\alpha},
\]
we have
\begin{align*}
\sup_{k\ge 0} 2^{k(1+\gamma-\alpha)}\sup_{(t_0,x_0)\in Q^l}\inf_{p\in \mathcal{P}_1}[u-p]_{\alpha,\alpha;Q_{2^{-k}(t_0,x_0)}}\le C\Big[C_f+\|u\|_{\alpha,\alpha}\Big].
\end{align*}
Since
\[
[Du]_{\gamma,\gamma;Q_{1/2}}+[\partial_t u]_{\gamma,\gamma;Q_{1/2}} \le C\sup_{k\ge 0} 2^{k(1+\gamma-\alpha)}\sup_{(t_0,x_0)\in Q_{1/2}}\inf_{p\in \mathcal{P}_1}[u-p]_{\alpha,\alpha;Q_{2^{-k}(t_0,x_0)}}+C[u]_{\alpha,\alpha},
\]
we obtain
\[
[Du]_{\gamma,\gamma;Q_{1/2}}+[\partial_t u]_{\gamma,\gamma;Q_{1/2}} \le C\Big[C_f
+\|u\|_{\alpha,\alpha}\Big].
\]
The proposition is proved.
\end{proof}

\begin{proof}[Proof of Theorem \ref{thm:schauder}]
The proof is the same as that of Theorem \ref{thm 1} using localizations. We sketch the proof here. We use the same notation as in the proof of Theorem \ref{thm 1}. Without loss of generality, we assume the equation \eqref{eq1.1} holds in $Q_{3}$.

Let $\eta_k\in C_0^\infty(\widehat Q^{k+3})$ be a sequence of nonnegative smooth cutoff functions satisfying
$\eta\equiv 1$ in $Q^{k+2}$, $|\eta|\le 1$ in $Q^{k+3}$,  $\|\partial_t^jD^i\eta_k\|_{L_\infty}\le C2^{k(i+j)}$ for each $i,j\ge 0$.  Set $v_k := u\eta_k\in C^{1+\gamma/\sigma,\sigma+\gamma}$ and notice that in $Q_1$,
\begin{align*}
\partial_tv_k=\inf_{\beta\in \mathcal{A}}(L_\beta v_k+b_\beta Dv_k-b_\beta uD\eta_k+h_{k\beta}+\eta_k f_{\beta}+\partial_t\eta_k u),
\end{align*}
where
\begin{equation*}
h_{k\beta}(t,x) = \int_{\bR^d}\frac{\xi_k(t,x,y) a_\beta(t,x,y)}{|y|^{d+1}}\,dy,
\end{equation*}
and
\begin{align*}
\xi_k(t,x,y) &:= u(t,x+y)(\eta_k(t,x+y)-\eta_k(t,x))-u(t,x)y\cdot D\eta_k(t,x)(\chi_{\sigma=1}\chi_{B_1}+\chi_{\sigma>1})\\
& =u(t,x+y)(\eta_k(t,x+y)-\eta_k(t,x))\quad\mbox{since }D\eta_k\equiv 0\mbox{ in } Q^{k+1}.
\end{align*}

We will apply Proposition \ref{prop:global1} to the equation of $v_k$ in $Q^{k+1}$ and obtain corresponding estimates for $v_k$ in $Q^k$.

Obviously, in $Q^{k+1}$ we have $\eta_k f_\beta\equiv f_\beta, b uD\eta_k\equiv 0$, and $\partial_t\eta_k u\equiv 0$. Thus, we only need to estimate the modulus of continuity of $h_{k\beta}$ in $Q^{k+1}$. Since
$$
\xi_k(t,x,y) := u(t,x+y)(\eta_k(t,x+y)-\eta_k(t,x)),
$$
which is the same as in the Theorem \ref{thm 1}, we also have \eqref{eq9.32} here. Therefore,
\begin{align*}
&[h_{k\beta}]_{\gamma/\sigma,\gamma;Q^{k+1}}\\
&\le C\Big(2^{k(\sigma+1) k}\|u\|_{L_\infty(Q_3)}+\sum_{j=0}^\infty2^{-j\sigma}\omega_u(2^j)\Big)
+C2^{k(\sigma+1)}[u]_{\gamma/\sigma,\gamma}+
C2^{k(\sigma+2)}\|u\|_{L_\infty(Q_3)}.\end{align*}
The rest is almost the same as (actually much simpler than) the proof of Theorem \ref{thm 1}, by using Proposition \ref{prop:global1} (recalling $\gamma<\alpha$), and we omit the details.
\end{proof}

In the following, we prove Theorem \ref{thm:linearschauder} using Theorem \ref{thm:schauder} and difference quotients.
\begin{proof}[Proof of Theorem \ref{thm:linearschauder}]
We only provide the proof for $\sigma+\gamma>2$.
We know from Theorem \ref{thm:schauder} that there exists $\gamma_0$ such that $\sigma+\gamma_0<2$ is not an integer, and the theorem holds for  $0<\gamma\le \gamma_0$. In the below we will prove the theorem for all $\gamma\in(\gamma_0,\sigma)$ using difference quotients.

We suppose the equation \eqref{eq:linear} holds in $Q_4$. We will consider the difference quotients in $x$ first. For $h\in (0,1/4)$, $e\in\mathbb{S}^{d-1}$, let
\[
u^h(t,x)=\frac{u(t, x+he)-u(t,x)}{h^{\gamma-\gamma_0}},
\quad f^h(t,x)=\frac{f(t, x+he)-f(t,x)}{h^{\gamma-\gamma_0}},
\]
and
\[
a^h(t,x,y)=\frac{a(t,x+he,y)-a(t,x,y)}{h^{\gamma-\gamma_0}},\quad b^h(t,x)=\frac{b(t, x+he)-b(t,x)}{h^{\gamma-\gamma_0}}.
\]
Then $u^h$ satisfies
\[
\partial_t u^h(t,x)= L_h u^h+b(t,x+he)Du^h+f^h+b^hDu+g\quad\mbox{in }Q_1,
\]
where
\[
\begin{split}
L_h u=\int_{\bR^d}\frac{\delta u(t,x,y)a(t,x+he,y)}{|y|^{d+\sigma}}\,dy,\quad
g=\int_{\bR^d}\frac{\delta u(t,x,y)a^h(t,x,y)}{|y|^{d+\sigma}}\,dy.
\end{split}
\]
Applying the result for $\gamma=\gamma_0$ gives
\[
\begin{split}
[u^h]_{1+\gamma_0/\sigma,\sigma+\gamma_0;Q_{3/4}}\le C\|u^h\|_{\gamma_0/\sigma,\gamma_0}+C[f^h+b^hDu+g]_{\gamma_0/\sigma,\gamma_0;Q_1}.
\end{split}
\]
It follows from direct calculations that
\begin{align*}
[g]_{\gamma_0/\sigma,\gamma_0;Q_1}\le C[u]_{1+\gamma_0/\sigma,\sigma+\gamma_0;Q_{5/4}}+C\|u\|_{\gamma_0/\sigma,\gamma_0}.
\end{align*}
Applying $C^{1+\gamma_0/\sigma,\sigma+\gamma_0}$ estimate as mentioned at the beginning of this proof, we have that
\[
[g]_{\gamma_0/\sigma,\gamma_0;Q_1}\le C\|u\|_{\gamma_0/\sigma,\gamma_0}+C[f]_{\gamma_0/\sigma,\gamma_0;Q_2}.
\]
Similarly, we have
\begin{align*}
[b^hDu]_{\gamma_0/\sigma,\gamma_0;Q_1}\le C\|Du\|_{\gamma_0/\sigma,\gamma_0;Q_1}\le C\|u\|_{\gamma_0/\sigma,\gamma_0}+C[f]_{\gamma_0/\sigma,\gamma_0;Q_2}.
\end{align*}
Therefore,
\[
\begin{split}
[u^h]_{1+\gamma_0/\sigma,\sigma+\gamma_0;Q_{3/4}}\le C\|u\|_{\gamma/\sigma,\gamma}+C[f]_{\gamma/\sigma,\gamma;Q_2}.
\end{split}
\]
 Note that we assumed that $\sigma+\gamma>2$ and thus, $\sigma>1$.
Also $1<\sigma+\gamma_0<2$.
Then we have
 \[
\begin{split}
[(Du)^h]^x_{\sigma+\gamma_0-1;Q_{3/4}}\le C\|u\|_{\gamma/\sigma,\gamma}+C[f]_{\gamma/\sigma,\gamma;Q_2},
\end{split}
\]
that is
\[
\frac{|Du(t,x+2he)-2Du(t,x+he)+Du(t,x)|}{h^{\sigma+\gamma-1}}\le C\|u\|_{\gamma/\sigma,\gamma}+C[f]_{\gamma/\sigma,\gamma;Q_2}
\]
for all $(t,x)\in Q_{1/2}$ and $h\le 1/20$. Making use of \eqref{eq 2.1} and sending $j\to\infty$ there, we have
\[
\begin{split}
|Du(t,x+he)-Du(t,x)-hD^2u(t,x)\cdot e|&\le Ch^{\sigma+\gamma-1}\sum_{k=1}^\infty2^{-k(\sigma+\gamma-2)}(\|u\|_{\gamma/\sigma,\gamma}+[f]_{\gamma/\sigma,\gamma;Q_2})\\
&\le C(\|u\|_{\gamma/\sigma,\gamma}+[f]_{\gamma/\sigma,\gamma;Q_2})h^{\sigma+\gamma-1},
\end{split}
\]
from which we have
\begin{equation}\label{eq:auxhigherorder}
[u]^x_{\sigma+\gamma;Q_{1/2}}\le C\|u\|_{\gamma/\sigma,\gamma}+C[f]_{\gamma/\sigma,\gamma;Q_2}.
\end{equation}

Similarly, we can use the difference quotients in $t$. For $s\in(0,1/10)$, let
\begin{equation}\label{eq:differencequotient}
u^s(t,x)=\frac{u(t, x)-u(t-s,x)}{s^{\frac{\gamma-\gamma_0}{\sigma}}}.
\end{equation}
By similar arguments, we have
\[
\begin{split}
[u^s]^t_{1+\gamma_0/\sigma;Q_{1/2}}\le C\|u\|_{\gamma/\sigma,\gamma}+C[f]_{\gamma/\sigma,\gamma;Q_2},
\end{split}
\]
that is
\[
[(u_t)^s]^t_{\gamma_0/\sigma;Q_{1/2}}\le C\|u\|_{\gamma/\sigma,\gamma}+C[f]_{\gamma/\sigma,\gamma;Q_2}.
\]
The same arguments in the above (noticing $\sigma>\gamma$) will lead to
\[
\begin{split}
[u]^t_{1+\gamma/\sigma;Q_{1/2}}\le C\|u\|_{\gamma/\sigma,\gamma}+C[f]_{\gamma/\sigma,\gamma;Q_2}.
\end{split}
\]
This estimate, together with \eqref{eq:auxhigherorder}, implies
\[
[u]_{1+\gamma/\sigma,\sigma+\gamma;Q_{1/2}}\le C\|u\|_{\gamma/\sigma,\gamma}+C[f]_{\gamma/\sigma,\gamma;Q_2}.
\]

We remark that actually the proof of the other situation $\sigma+\gamma \in(0,1)\cup(1,2)$ is exactly the same as above. 
\end{proof}

\section{Linear parabolic  equations with measurable coefficient in $t$}

In this section, we consider the linear equation \eqref{eq:linear}, where $K$, $b$, and $f$ are Dini continuous in $x$ but only measurable in the time variable $t$. We first need a proposition for the case that $K$ does not depend on $x$, and $b\equiv 0$.

\begin{proposition}\label{prop3.1-linear}
Let $\sigma\in(0,2)$ and $0< \lambda \le \Lambda$. Assume that $K$ does not depend on $x$, and $b\equiv 0$. Let $\alpha\in (0,1)$ such that $\sigma+\alpha$ is not an integer. Suppose $u\in C_x^{\sigma+\alpha}(Q_1)\cap C^{\alpha/\sigma,\alpha}((-2^\sigma,0)\times\bR^d)$ is a solution of \eqref{eq:linear} in $Q_1$.
Then,
\begin{equation*}
[u]^x_{\alpha+\sigma;Q_{1/2}}\le C\sum_{j=1}^\infty2^{-j\sigma}M_j+C[f]^x_{\alpha;Q_1},
\end{equation*}
where
\begin{equation*}
M_j = \sup_{^{(t,x),(t,x')\in (-1,0)\times B_{2^j},}_{0<|x-x'|<2}}\frac{|u(t,x)-u(t,x')|}{|x-x'|^\alpha}
\end{equation*}
and $C>0$ is a constant depending only on $d$, $\sigma$, $\lambda$, $\Lambda$, and $N_0$, and is uniformly bounded as $\sigma\to 2^-$.
\end{proposition}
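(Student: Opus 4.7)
Since $K$ is independent of $x$ and $b \equiv 0$, the operator $L$ commutes with $x$-translations, so for any $e \in S^{d-1}$ and $h \in (0, 1)$ the rescaled finite difference $v_{h,e}(t,x) := h^{-\alpha}(u(t, x+he) - u(t, x))$ satisfies the linear equation $\partial_t v_{h,e} = L v_{h,e} + f_{h,e}$ on $Q_{1-h}$, with $\|f_{h,e}\|_{L^\infty(Q_1)} \le [f]^x_{\alpha; Q_1}$ and $\|v_{h,e}\|_{L^\infty((-1, 0) \times B_{2^j})} \le M_{j+1}$ uniformly in $h, e$. My plan is to reduce the desired $C^{\sigma + \alpha}$ estimate on $u$ to a uniform (in $h, e$) $C^\sigma$-type estimate on the family $\{v_{h,e}\}$.

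The main step is to prove, as the linear/measurable-in-$t$ analog of Proposition \ref{prop3.1}, that any $w$ solving $\partial_t w = L w + g$ in $Q_1$ satisfies
\begin{equation*}
[w]^x_{\sigma; Q_{1/2}} \le C \sum_{j = 1}^\infty 2^{-j\sigma} \|w\|_{L^\infty((-1, 0) \times B_{2^j})} + C \|g\|_{L^\infty(Q_1)}.
\end{equation*}
This should follow from the spatial Campanato/freezing scheme of \cite[Corollary 4.6]{DZ16}: freeze the kernel at time $t_0$ (no $x$-freezing is needed, since $K$ is already $x$-independent) and approximate $w$ locally by solutions of the time-frozen translation-invariant equation, whose interior $C^{\sigma + \epsilon}$ estimates in $x$ are classical. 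The argument is carried out entirely in $x$, so no regularity of $K$ in $t$ is needed; $t$ acts as a passive parameter throughout. Applying this base estimate to $w = v_{h, e}$ uniformly in $h, e$ yields
\begin{equation*}
\sup_{h \in (0,1),\, e \in S^{d-1}} [v_{h, e}]^x_{\sigma; Q_{1/2}} \le C \sum_{j = 1}^\infty 2^{-j\sigma} M_j + C [f]^x_{\alpha; Q_1},
\end{equation*}
and a standard Zygmund-type characterization (valid since $\sigma + \alpha \notin \bZ$) converts this uniform bound on mixed second-order spatial differences of $u$ into the desired $[u]^x_{\sigma + \alpha; Q_{1/2}}$ bound.

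The principal obstacle is the tail-sum form of the base estimate: the usual interior $C^\sigma$ estimates for nonlocal parabolic equations with kernels measurable in $t$ (e.g., \cite{DK13, JX152}) are phrased with a uniform $L^\infty$ tail bound, whereas here I need the sharper sum of dyadic $L^\infty$ norms on the right-hand side. This is handled exactly as in \cite[Corollary 4.6]{DZ16} by splitting the nonlocal operator into integrals over $B_1$ and $B_1^c$ and summing the latter dyadically in annuli; since each term of the sum involves only a spatial $L^\infty$ oscillation on $B_{2^{j+1}} \setminus B_{2^j}$, the argument transfers to the measurable-in-$t$ setting without modification, and the final constant $C$ inherits the uniform-in-$\sigma$ dependence from \cite[Corollary 4.6]{DZ16}.
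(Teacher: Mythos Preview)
Your difference-quotient reduction is a legitimate alternative route --- the paper itself uses exactly this device later, in the proof of Theorem~\ref{thm:linearschauder} --- and the passage from a uniform bound on $[v_{h,e}]^x_{\sigma}$ to $[u]^x_{\sigma+\alpha}$ via a Zygmund characterization is sound when $\sigma+\alpha\notin\bZ$.

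The gap is in your proposed proof of the base $C^\sigma_x$ estimate. Freezing the kernel at a fixed time $t_0$ does not work when $K(t,y)$ is merely measurable in $t$: the error term $(L(t)-L(t_0))w$ in the equation for the difference is of the same order as $Lw$ and is not small in any norm, so the Campanato iteration cannot close. Nor does $t$ ``act as a passive parameter'' --- the equation is genuinely parabolic, $\partial_t$ and $L$ scale the same way, and spatial regularity cannot be read off slice by slice. Corollary~4.6 of \cite{DZ16} (Proposition~\ref{prop3.1} here) assumes the kernel is independent of \emph{both} $t$ and $x$, so it does not supply what you need.

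The paper handles this by invoking the global a priori estimate of Mikulevi\v{c}ius--Pragarauskas \cite[Theorem~4]{MP}, which gives $\|v\|^x_{\sigma+\alpha}\le C\|\partial_t v-Lv\|^x_\alpha$ on all of $(-\infty,0)\times\bR^d$ for $x$-translation-invariant kernels that are merely measurable in $t$. After multiplying $u(t,x)-u(t,0)$ by a cut-off, this yields the interior $C^{\sigma+\alpha}_x$ bound directly; a second localization then produces the dyadic tail $\sum 2^{-j\sigma}M_j$. If you were to supply your base $C^\sigma_x$ estimate by the same mechanism, your difference-quotient bootstrap would then go through --- but at that point you have essentially already carried out the paper's argument at the lower exponent, and the extra layer is redundant rather than simplifying.
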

\begin{proof}
We only prove the case that $\sigma+\alpha>2$ as before. Let $\eta$ be a cut-off function such that $\eta\in C^\infty_c(\widehat Q_{3/4})$ and $\eta\equiv 1$ in $Q_{1/2}$. Let $w(t,x)=u(t,x)-u(t,0)$, $\tilde f(t,x)=f(t,x)-f(t,0)$ and $v=\eta w$. Then $v$ satisfies
\[
v_t=Lv+h+\eta_tw+\eta \tilde f-\eta g(t)\quad\mbox{in }(-2^\sigma,0)\times\bR^d,
\]
where
\[
\begin{split}
h&=\eta Lw- L(\eta w)\\
&=\int_{\bR} \Big((\eta(t,x)-\eta(t,x+y))w(t,x+y)+y^TD\eta(t,x)w(t,x)\Big)K(t,y)\,d y
\end{split}
\]
and
\[
g(t)=(Lu)(t,0).
\]
By Theorem 4 in \cite{MP}, we have
\[
\|v\|^x_{\sigma+\alpha}\le C \|h+\eta_tw+\eta \tilde f-\eta g(t)\|^x_{\alpha}.
\]
From (3.18) in \cite{DZ162} and (3.23) in \cite{DZ162}, we have
\[
\begin{split}
\|h\|^x_{\alpha}&\le C (\|w\|^x_{\alpha;(-1,0)\times\bR^d}+[\nabla w]^x_{\alpha;Q_{15/16}})\\
&\le C (\|u\|^x_{\alpha;(-1,0)\times\bR^d}+[\nabla u]^x_{\alpha;Q_{15/16}}).
\end{split}
\]
It is clear that
\[
\begin{split}
\|\eta g(t)\|^x_{\alpha}&\le C(\|D^2u\|_{L_\infty(Q_{7/8})}
+\|D u\|_{L_\infty(Q_{7/8})}+\|u\|_{L_\infty((-1,0)\times\bR^d)}),\\
\|\eta \tilde f\|^x_{\alpha}&\le C[f]^x_{\alpha,Q_{3/4}}.
\end{split}
\]
Therefore, we have
\[
[u]^x_{\sigma+\alpha;Q_{1/2}}\le C (\|D^2u\|_{L_\infty(Q_{7/8})}
+\|Du\|_{L_\infty(Q_{7/8})}+[\nabla u]^x_{\alpha;Q_{15/16}}+\|u\|^x_{\alpha;(-1,0)\times\bR^d}+[f]^x_{\alpha;Q_{3/4}}).
\]
The same interpolation arguments of the proof of Theorem 1.1 in \cite{DZ16} lead to
\[
[u]^x_{\sigma+\alpha;Q_{1/2}}\le C (\|u\|^x_{\alpha;(-1,0)\times\bR^d}+[f]^x_{\alpha;Q_{3/4}}).
\]
Then as in the proof of Proposition \ref{prop3.1} (see also  \cite[Corollary 4.6]{DZ16}), applying this estimate to the equation of $\tilde v:=\tilde \eta(u(t,x)-u(t,0))$, where $\tilde\eta\in C_0^\infty(\widehat Q_{15/16})$ satisfying $\tilde \eta=1$ in $Q_{3/4}$, we have the desired estimates for $[u]^x_{\alpha+\sigma;Q_{1/2}}$.
\end{proof}

\begin{proposition}
	\label{prop:linear}
Suppose that \eqref{eq:linear} is satisfied in $Q_2$. Then under the conditions of Theorem \ref{thm 2}, we have
\begin{equation}
	\label{eq:linearglobal}
[u]_{\sigma;Q_{1/2}}^x\le C\|u\|_{\alpha}^x+C\sum_{k=1}^\infty\omega_f(2^{-k}),
\end{equation}
where $C>0$ is a constant depending only on $d$, $\lambda$, $\Lambda$, $\omega_a$, $\omega_b$, $N_0$ and $\sigma$. 
\end{proposition}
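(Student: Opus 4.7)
The plan is to adapt the Campanato-type scheme of Propositions \ref{prop3.21}, \ref{prop 3.22}, and \ref{prop3.23} to the present setting. The essential difference is that because the coefficients are merely measurable in $t$, the auxiliary frozen-coefficient problem on $Q_{2^{-k}}(t_0, x_0)$ must be obtained by freezing only the $x$-variable. Specifically, for $k \in \bN$ and $(t_0, x_0) \in Q^l := Q_{1-2^{-l}}$, I will let $v$ solve
\[
v_t = L(t, x_0)\,v + b(t, x_0)\,Dv + f(t, x_0) \quad \text{in } Q_{2^{-k}}(t_0, x_0), \qquad v = u \text{ on the parabolic boundary.}
\]
Since Proposition \ref{prop3.1-linear} requires $b \equiv 0$, I will first remove the drift via the $t$-dependent translation $\hat v(t, x) := v\bigl(t,\,x + B(t_0) - B(t)\bigr)$ with $B(t) := \int_{t_0}^t b(s, x_0)\,ds$, in the spirit of the reduction opening the proof of Proposition \ref{prop3.23}. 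A direct computation gives $\hat v_t = L(t, x_0)\hat v + f(t, x_0)$, and the bound $|B(t) - B(t_0)| \le N_0 2^{-k\sigma}$ guarantees that, after rescaling, the translated domain is comparable to a standard cylinder. Since $f(t, x_0)$ is independent of $x$, applying Proposition \ref{prop3.1-linear} to $\hat v$ and pulling back yields
\[
[v]^x_{\sigma+\alpha;\,Q_{2^{-k-1}}(t_0, x_0)} \le C \sum_{j=1}^\infty 2^{(k-j)\sigma} M_j(v),
\]
where $M_j(v)$ denotes the spatial oscillation of $v$ at scale $2^{j-k}$; combined with the mean value theorem and Lemma \ref{lem2.4} (if $\sigma < 1$) or Lemma \ref{lem2.7} (if $\sigma \ge 1$), this produces a bound on $\inf_{p \in \mathcal{P}_x}[v - p]^x_{\alpha;\,Q_{2^{-k-k_0}}(t_0, x_0)}$.

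Next, the difference $w := u - v$ vanishes on the parabolic boundary and satisfies a Pucci-type inequality with bounded drift $b(t, x_0)$ and right-hand side bounded by
\[
C_k \le \omega_f(2^{-k}) + \omega_b(2^{-k}) \|Du\|_{L_\infty(Q^{l+1})} + C \omega_a(2^{-k}) \Bigl(\sum_{j=0}^\infty 2^{j(\sigma-\alpha)} \sup_{(t_0, x_0) \in Q^{l+1}} \inf_{p \in \mathcal{P}_x}[u - p]^x_{\alpha;\,Q_{2^{-j}}(t_0, x_0)} + \|u\|_{L_\infty}\Bigr),
\]
where Lemma \ref{lem2.2} is used when $\sigma > 1$. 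A routine extension of \cite[Lemma 2.5]{DZ16} to Pucci operators with bounded time-dependent drift then yields $[w]^x_{\alpha;\,Q_{2^{-k}}(t_0, x_0)} \le C 2^{-k(\sigma-\alpha)} C_k$. Adding the $v$- and $w$-bounds via the triangle inequality, multiplying by $2^{(k+k_0)(\sigma-\alpha)}$, summing over $k \ge l+1$, and then taking $k_0$ and $l$ large enough (in the style of the derivations of \eqref{eq:thetrick}, \eqref{eq 3.14}, and \eqref{eq:controlnormlarge}) will produce the self-improving inequality
\[
\frac{1}{4^l} \sum_{j=0}^\infty 2^{j(\sigma-\alpha)} \sup_{(t_0, x_0) \in Q^l} \inf_{p \in \mathcal{P}_x}[u - p]^x_{\alpha;\,Q_{2^{-j}}(t_0, x_0)} \le C \|u\|^x_\alpha + C \sum_{k=1}^\infty \omega_f(2^{-k}).
\]
For $\sigma \ge 1$, the $\omega_b \|Du\|_{L_\infty}$ contribution will be absorbed using the interpolation $\|Du\|_{L_\infty(Q^{l+1})} \le \epsilon [u]^x_{\sigma;\,Q^{l+1}} + C_\epsilon \|u\|^x_\alpha$. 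Applying the spatial portion of Lemma \ref{lem2.1} to this summation will finally produce \eqref{eq:linearglobal}.

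The hardest part is the handling of the drift: Proposition \ref{prop3.1-linear} requires $b \equiv 0$, and the $t$-dependent translation is the cleanest route, working precisely because the frozen drift $b(t, x_0)$ depends only on $t$. One must verify that the translated cylinder remains admissible for Proposition \ref{prop3.1-linear} after rescaling (a straightforward bookkeeping step since $|B(t) - B(t_0)|$ is controlled by the radius), and that the $\omega_b(2^{-k}) \|Du\|_{L_\infty}$ term in $C_k$, present only for $\sigma \ge 1$, is absorbed in the iteration via interpolation, mirroring the treatment in the proof of Proposition \ref{prop 3.22}.
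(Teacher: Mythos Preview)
Your outline is essentially correct and follows the same Campanato scheme as the paper, but the organization of the drift differs, and one step you call ``routine'' deserves more care.

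\textbf{How the paper handles the drift.} The paper does \emph{not} keep a frozen drift $b(t,x_0)Dv$ in the auxiliary problem. For $\sigma\in(1,2)$ the frozen equation is $\partial_t v_M=L(t,0)v_M+f(t,0)+b(t,0)Du(t,0)-\partial_t p_0$: the drift is replaced by the $x$-independent source $b(t,0)Du(t,0)$, so $v_M$ has no drift and Proposition~\ref{prop3.1-linear} applies directly. The full drift discrepancy $bDu-b(t,0)Du(t,0)$ then lands in $C_k$, producing both your $\omega_b(2^{-k})\|Du\|_{L_\infty}$ term \emph{and} an extra $N_0 2^{-k\alpha}[Du]^x_{\alpha}$ term, both absorbed at the end. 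For $\sigma=1$ the paper first performs a single \emph{global} translation $\hat u(t,x)=u(t,x+B_0(t))$ with $B_0(t)=\int_t^0 b(s,0)\,ds$, reducing to an equation whose drift $\hat b-b(t,0)$ is already of size $\omega_b$; after this the frozen problem again has no drift, and the small residual drift goes into $C_k$.

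\textbf{The gap in your scheme.} If you keep $b(t,x_0)Dv$ in the frozen problem and remove it by translation (fine), your $w=u-v$ inherits the drift $b(t,x_0)Dw$. You then invoke ``a routine extension of \cite[Lemma 2.5]{DZ16} to Pucci operators with bounded time-dependent drift.'' For $\sigma>1$ this is indeed routine (drift is lower order), but for $\sigma=1$ it is not: the drift matches the order of the operator and there is no smallness to absorb. The fix is exactly the translation you already used for $v$---since $b(t,x_0)$ is $x$-independent, translate $w$ by the same $B(t)$ and apply the drift-free H\"older estimate on the shifted cylinder. But once you translate $w$, you are effectively translating $u$, which is precisely the paper's global move. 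So your per-scale translation buys nothing over the paper's single global translation at $\sigma=1$, and costs you the bookkeeping of translated domains.

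\textbf{One more point.} For $\sigma\ge 1$ the paper's subtracted polynomial $p_0(t,x)$ is the first-order Taylor expansion in $x$ of the \emph{$x$-only} mollification $u^{(2^{-k})}$ at $(t,0)$, hence $t$-dependent; correspondingly the iterated quantity is $\sup_{t}\inf_{p\in\mathcal P_x}[u(t,\cdot)-p]^x_{\alpha;B_{2^{-j}}(x_0)}$, with the optimal $p$ allowed to vary in $t$. Your notation $\inf_{p\in\mathcal P_x}[u-p]^x_{\alpha;Q_{2^{-j}}}$ should be read this way for Lemma~\ref{lem2.7} (more precisely, its proof) and Lemma~\ref{lem2.2} to apply as you intend.
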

\begin{proof} We will consider three cases separately.

\medskip

\textbf{Case 1}: $\sigma\in (0,1)$.

\medskip

For $k\in \mathbb{N}$, let $v$ be the solution of
\begin{equation}\label{eq:existenceofapproximation}
\begin{cases}
\partial_t v = L(t,0)v+f(t,0)\quad& \text{for}\,\,x\in B_{2^{-k}}, \mbox{and almost every }t\in(-2^{-\sigma k},0]  \\
v = u\quad & \text{in}\,\, \big((-2^{-k\sigma},0)\times B_{2^{-k}}^c\big)\cup \big(\{t=-2^{-k\sigma}\}\times B_{2^{-k}}\big).
\end{cases}
\end{equation}
We sketch the proof of the existence of such $v$ as follows. Let $K^\varepsilon(t,0,y)$ and $f^\varepsilon(t,0)$ be the mollifications of $K(t,0,y)$ and $f(t,0)$ in $t$. Then there exists $v^\varepsilon$ satisfing
\begin{equation}\label{eq:auxexistence}
\begin{cases}
\partial_t v^\varepsilon = L^\varepsilon(t,0)v^\varepsilon+f^\varepsilon(t,0)\quad& \text{in}\,\,Q_{2^{-k}},\\
v^\varepsilon = u\quad & \text{in}\,\, \big((-2^{-k\sigma},0)\times B_{2^{-k}}^c\big)\cup \big(\{t=-2^{-k\sigma}\}\times B_{2^{-k}}\big).
\end{cases}
\end{equation}
Since this equation is uniformly elliptic, we have the global uniform H\"older estimate of $v^\varepsilon$ which is independent of $\varepsilon$. Thus, there exists a subsequence converging locally uniformly to a global H\"older continuous function $v$. On the other hand, by  Proposition \ref{prop3.1-linear}, we can reselect a subsequence such that for almost every time, they converge to $v$ locally uniformly in $C_x^{\sigma+\alpha}(B_{2^{-k}})$. Since we have from \eqref{eq:auxexistence} that for all $t\in(-2^{-k\sigma},0]$,
\[
\begin{cases}
v^\varepsilon(t,x)=u(-2^{-k\sigma},x)+\int_{-2^{-k\sigma}}^t L^\varepsilon(\tau,0)v^\varepsilon(\tau,x)\,d\tau+\int_{-2^{-k\sigma}}^t  f^\varepsilon(\tau,0)\,d\tau \quad \text{in}\,\,Q_{2^{-k}},\\
v^\varepsilon = u\quad\quad\quad\quad\quad  \text{in}\,\, \big((-2^{-k\sigma},0)\times B_{2^{-k}}^c\big)\cup \big(\{t=-2^{-k\sigma}\}\times B_{2^{-k}}\big),
\end{cases}
\]
we can send $\varepsilon\to 0$, using dominated convergence theorem, to obtain
\[
\begin{cases}
v(t,x)=u(-2^{-k\sigma},x)+\int_{-2^{-k\sigma}}^t L(\tau,0)v(\tau,x)\,d\tau+\int_{-2^{-k\sigma}}^t  f(\tau,0)\,d\tau \quad \text{in}\,\,Q_{2^{-k}},\\
v = u\quad\quad\quad\quad\quad  \text{in}\,\, \big((-2^{-k\sigma},0)\times B_{2^{-k}}^c\big)\cup \big(\{t=-2^{-k\sigma}\}\times B_{2^{-k}}\big).
\end{cases}
\]
This proves \eqref{eq:existenceofapproximation}. Moreover, we have from the estimates of $v^\varepsilon$ in Proposition \ref{prop3.1-linear} by sending $\varepsilon\to 0$, that
\begin{align}
[v]^x_{\alpha+\sigma;Q_{2^{-k-1}}} \le C\sum_{j=1}^\infty 2^{(k-j)\sigma}M_j+ C2^{k\sigma}[v]^x_{\alpha;Q_{2^{-k}}},
	\label{eq 1.121-2}
\end{align}
where  $\alpha\in(0,1)$ satisfying $\sigma+\alpha<1$ and
\begin{equation*}
M_j = \sup_{\substack{(t,x),(t,x')\in (-2^{-k\sigma},0)\times B_{2^{j-k}},\\0<|x-x'|<2^{-k+1}}}\frac{|u(t,x)-u(t,x')|}
{|x-x'|^\alpha}.
\end{equation*}

Let $k_0\ge 1$ be an integer to be specified. From \eqref{eq 1.121-2}, we have
\begin{equation}\label{eq 3.3-2}
[v]^x_{\alpha;Q_{2^{-k-k_0}}} \le C2^{-(k+k_0)\sigma}\sum_{j=1}^k 2^{(k-j)\sigma}M_j+ C 2^{-(k+k_0)\sigma}[u]^x_\alpha + C2^{-k_0\sigma}[v]^x_{\alpha;Q_{2^{-k}}}.
\end{equation}
Let $w: =u-v$ which satisfies
\begin{equation*}
\begin{cases}
\partial_tw=L(t,0)w +C_k\quad &\text{in}\,\, Q_{2^{-k}},\\
w = 0\quad &\text{in}\,\, \big((-2^{-k\sigma},0)\times B_{2^{-k}}^c\big)\cup \big(\{t=-2^{-k\sigma}\}\times B_{2^{-k}}\big),
\end{cases}
\end{equation*}
where
$$
C_k(t,x) = f(t,x)-f(t,0)+(L(t,x)-L(t,0))u.
$$
It is easily seen that
\begin{align*}
	\|C_k\|_{L_\infty(Q_{2^{-k}})}
	&\le \omega_f(2^{-k}) + C\omega_a(2^{-k})\Big(\sup_{(t_0,x_0)\in Q_{2^{-k}}}\sum_{j=0}^\infty 2^{j(\sigma-\alpha)}[u]^x_{\alpha;Q_{2^{-j}}(t_0,x_0)}+\|u\|_{L_\infty}\Big).
\end{align*}
Then by the H\"older estimate \cite[Lemma 2.5]{DZ16}, we have
\begin{align}
	\nonumber
&[w]_{\alpha/\sigma,\alpha;Q_{2^{-k}}}\le C2^{-k(\sigma-\alpha)}C_k
\\
                    	\label{eq 3.5-2}
&\le C2^{-k(\sigma-\alpha)}\Big[\omega_f(2^{-k})+\omega_a(2^{-k})\Big(\sup_{(t_0,x_0)\in Q_{2^{-k}}}\sum_{j=0}^\infty 2^{j(\sigma-\alpha)}[u]^x_{\alpha;Q_{2^{-j}}(t_0,x_0)}+\|u\|_{L_\infty}\Big)
\Big].
\end{align}
Combining \eqref{eq 3.3-2} and \eqref{eq 3.5-2} yields
\begin{align}
	\nonumber	
	&2^{(k+k_0)(\sigma-\alpha)}[u]^x_{\alpha;Q_{2^{-k-k_0}}}\\
	\nonumber
	&\le C2^{-(k+k_0)\alpha}\sum_{j=1}^k2^{(k-j)\sigma} [u]^x_{\alpha;(-2^{-k\sigma},0)\times B_{2^{j-k}}}\\
	\nonumber
&\,\,+C2^{-(k+k_0)\alpha}[u]^x_{\alpha}+C2^{-k_0\alpha+k(\sigma-\alpha)} [u]^x_{\alpha;(-2^{-k\sigma},0)\times B_{2^{-k}}}+C2^{k_0(\sigma-\alpha)}\omega_f(2^{-k})\nonumber\\
	\label{eq 3.7-2}	
&\,\,
+C2^{k_0(\sigma-\alpha)}\omega_a(2^{-k})\Big(\sup_{(t_0,x_0)\in Q_{2^{-k}}}\sum_{j=0}^\infty 2^{j(\sigma-\alpha)}[u]^x_{\alpha;Q_{2^{-j}(t_0,x_0)}}
+\|u\|_{L_\infty}\Big).
\end{align}
Let $Q^l$, $l=\ell_0,\ell_0+1,\cdots$ be those in the proof of Proposition \ref{prop3.21}. By translation of the coordinates, from \eqref{eq 3.7-2} we have for $l\ge \ell_0, k\ge l+1$,
\begin{align}
	\nonumber
&2^{(k+k_0)(\sigma-\alpha)}\sup_{(t_0,x_0)\in Q^l}[u]^x_{\alpha;Q_{2^{-k-k_0}}(t_0,x_0)}\\
	\nonumber
&\le  C2^{-(k+k_0)\alpha}\sup_{(t_0,x_0)\in Q^l}\sum_{j=0}^k2^{(k-j)\sigma}[u]^x_{\alpha;(t_0-2^{-k\sigma},t_0)\times B_{2^{j-k}}(x_0)}+C2^{-(k+k_0)\alpha}[u]^x_{\alpha}\\
   \label{eq 3.8-2}
&\quad+C2^{k_0(\sigma-\alpha)}\Big[ \omega_f(2^{-k})+\omega_a(2^{-k})\cdot\Big(\sup_{(t_0,x_0)\in Q^{l+1}}\sum_{j=0}^\infty 2^{j(\sigma-\alpha)}[u]^x_{\alpha;Q_{2^{-j}}(t_0,x_0)}	+\|u\|_{L_\infty}\Big)\Big].
\end{align}
Then we take the sum \eqref{eq 3.8-2} in $k = l+1,l+2, \ldots$ to obtain
\begin{align*}
	\nonumber
	&\sum_{k=l+1}^\infty 2^{(k+k_0)(\sigma-\alpha)}\sup_{(t_0,x_0)\in Q^l}[u]^x_{\alpha;Q_{2^{-k-k_0}}(t_0,x_0)}\\
	\nonumber
	&\le C\sum_{k=l+1}^\infty 2^{-(k+k_0)\alpha} \sup_{(t_0,x_0)\in Q^l}\sum_{j=0}^k2^{(k-j)\sigma}[u]^x_{\alpha;(t_0-2^{-k\sigma},t_0)\times B_{2^{j-k}}(x_0)} +C2^{-(l+k_0)\alpha}[u]^x_{\alpha} \\
         \nonumber
&\quad+C2^{k_0(\sigma-\alpha)}\sum_{k=l+1}^\infty\omega_f(2^{-k})
+C2^{k_0(\sigma-\alpha)} \\
	\nonumber &\qquad \cdot
\sum_{k=l+1}^\infty\omega_a(2^{-k})\Big(\sum_{j=0}^\infty 2^{j(\sigma-\alpha)}\sup_{(t_0,x_0)\in Q^{l+1}}[u]^x_{\alpha;Q_{2^{-j}}(t_0,x_0)}+ \|u\|_{L_\infty}\Big).
\end{align*}
As before, by switching the order of summations and then replacing $k$ by $k+j$, the first term on the right-hand side is bounded by
\begin{align*}
C2^{-k_0\alpha}\sum_{k=0}^\infty2^{k(\sigma-\alpha)}\sup_{(t_0,x_0)\in Q^l}[u]^x_{\alpha;Q_{2^{-k}}(t_0,x_0)}.
\end{align*}
With the above inequality, we have\begin{align*}
	\nonumber
	&\sum_{k=l+1}^\infty 2^{(k+k_0)(\sigma-\alpha)}\sup_{(t_0,x_0)\in Q^l}[u]^x_{\alpha;Q_{2^{-k-k_0}}(t_0,x_0)}\\
	\nonumber
	&\le C2^{-k_0\alpha}\sum_{k=0}^\infty2^{k(\sigma-\alpha)}\sup_{(t_0,x_0)\in Q^l}[u]^x_{\alpha;Q_{2^{-k}}(t_0,x_0)}\\
&\quad +C2^{-(l+k_0)\alpha}[u]^x_{\alpha} +C2^{k_0(\sigma-\alpha)}\sum_{k=l+1}^\infty\omega_f(2^{-k})
+C2^{k_0(\sigma-\alpha)} \\
	\nonumber &\qquad \cdot
\sum_{k=l+1}^\infty\omega_a(2^{-k})\Big(\sum_{j=0}^\infty 2^{j(\sigma-\alpha)}\sup_{(t_0,x_0)\in Q^{l+1}}[u]^x_{\alpha;Q_{2^{-j}}(t_0,x_0)}+ \|u\|_{L_\infty}\Big).
\end{align*}
The bound above together with the obvious inequality
\begin{equation*}
\sum_{j=0}^{l+k_0}2^{j(\sigma-\alpha)}\sup_{(t_0,x_0)\in Q^l}[u]^x_{\alpha;Q_{2^{-j}}(t_0,x_0)} \le C2^{(l+k_0)(\sigma-\alpha)}[u]^x_{\alpha},
\end{equation*}
implies that
\begin{align*}
	&\sum_{j=0}^\infty 2^{j(\sigma-\alpha)}\sup_{(t_0,x_0)\in Q^l}[u]^x_{\alpha;Q_{2^{-j}}(t_0,x_0)}\\
	&\le C2^{-k_0\alpha}\sum_{j=0}^\infty 2^{j(\sigma-\alpha)}\sup_{(t_0,x_0)\in Q^l}[u]^x_{\alpha;Q_{2^{-j}}(t_0,x_0)}\\
	&\,\,+C2^{-(l+k_0)\alpha}[u]^x_{\alpha}+C2^{(l+k_0)(\sigma-\alpha)}[u]^x_{\alpha}
+C2^{k_0(\sigma-\alpha)}\sum_{k=l+1}^\infty
\omega_f(2^{-k})\\
	&\,\,+C2^{k_0(\sigma-\alpha)}\sum_{k=l+1}^\infty \omega_a(2^{-k})\cdot\Big(\sum_{j=0}^\infty 2^{j(\sigma-\alpha)}\sup_{(t_0,x_0)\in Q^{l+1}}[u]^x_{\alpha;Q_{2^{-j}}(t_0,x_0)}
+\|u\|_{L_\infty}\Big).
\end{align*}
By first choosing $k_0$ sufficiently large and then $\ell_0$ sufficiently large (recalling $l\ge\ell_0$), we get
\begin{align*}
&\sum_{j=0}^\infty 2^{j(\sigma-\alpha)}\sup_{(t_0,x_0)\in Q^l}[u]^x_{\alpha;Q_{2^{-j}}(t_0,x_0)}\\
&\le \frac 14 \sum_{j=0}^\infty 2^{j(\sigma-\alpha)}\sup_{(t_0,x_0)\in Q^{l+1}}[u]^x_{\alpha;Q_{2^{-j}}(t_0,x_0)}+C2^{(l+k_0)(\sigma-\alpha)}\|u\|^x_{\alpha}+ C\sum_{k=1}^\infty \omega_f(2^{-k}).
\end{align*}
This implies
\[
\sum_{j=0}^\infty 2^{j(\sigma-\alpha)}\sup_{(t_0,x_0)\in Q^l}[u]^x_{\alpha;Q_{2^{-j}}(t_0,x_0)}\le C\|u\|^x_{\alpha}+ C\sum_{k=1}^\infty \omega_f(2^{-k}),
\]
which together with Lemma \ref{lem2.1} $(i)$  gives \eqref{eq:linearglobal}.

\medskip

\textbf{Case 2:} $\sigma\in (1,2)$.
\medskip

For $k\in \mathbb{N}$, let $v_M$ be the solution of
\begin{equation*}
\begin{cases}
	\partial_tv_{M} = L(t,0)v_M+f(t,0)+b(t,0)Du(t,0)-\partial_t p_0\quad \,\,\text{in}\,\,Q_{2^{-k}}\\
	v_M = g_M\qquad\qquad\qquad\qquad \,\,\text{in}\,\,\big((-2^{-k\sigma},0)\times B_{2^{-k}}^c\big)\cup \big(\{t=-2^{-k\sigma}\}\times B_{2^{-k}}\big),
	\end{cases}
\end{equation*}
where $M\ge 2\|u-p_0\|_{L_\infty(Q_{2^{-k}})}$ is a constant to be specified later,
\begin{equation*}
g_M = \max(\min(u-p_0,M),-M),
\end{equation*}
and
$$
p_0=p_0(t,x)=u^{(2^{-k})}(t,0)+x\cdot (Du^{(2^{-k})})(0,0),
$$ and $u^{(2^{-k})}$ is the mollification of $u$ in the $x$-variable only:
\begin{align*}
u^{(R)}(t,x) = \int_{\bR^{d}}u(t,x-Ry)\zeta(y)\,dy
\end{align*}
with $\zeta\in C_0^\infty(B_1)$ being a radial nonnegative function with unit integral.

Note that $[\partial _t p_0]_\alpha^x=0$. By Proposition \ref{prop3.1-linear}, we have
\begin{align*}
[v_M]^x_{\alpha+\sigma;Q_{2^{-k-1}}} \le C\sum_{j=1}^\infty 2^{(k-j)\sigma}M_j + C2^{k\sigma}[v_M]^x_{\alpha;Q_{2^{-k}}},
\end{align*}
where $\alpha\in(0,\min\{2-\sigma,(\sigma-1)/2\})$ and
\begin{align*}
M_j = \sup_{\substack{(t,x),(t,x')\in (-2^{-k\sigma},0)\times B_{2^{j-k}}\\ 0< |x-x'|<2^{-k+1}}}\frac{|u(t,x)-p_0(t,x)-u(t,x^\prime)
+p_0(t,x')|}{|x-x'|^\alpha}.
\end{align*}
From an estimate similar to Lemma \ref{lem2.7} with $\sigma\in (1, 2)$, it follows that for $j > k$, we have
\[
M_j \le C [u]^x_{\alpha;(-2^{-k\sigma},0)\times \bR^d},
\]
and thus,
\begin{align}
	\nonumber
	&[v_M]^x_{\alpha+\sigma;Q_{2^{-k-1}}} \le C\sum_{j=1}^\infty 2^{(k-j)\sigma}M_j + C2^{k\sigma}[v_M]^x_{\alpha;Q_{2^{-k}}}\\
	\label{eq 3.9-2}
	&\le C\sum_{j=1}^k 2^{(k-j)\sigma}M_j +C[u]^x_{\alpha;(-2^{-k\sigma},0)\times \bR^d}+ C2^{k\sigma}[v_M]^x_{\alpha;Q_{2^{-k}}}.
\end{align}
From the equation of $v_M$, we have for $h, \tau>0$ small, $(t,x)\in Q_{2^{-k-1}}$ and $|e|=1$ that
\begin{align*}
&\frac{v_M(t,x+he)-v_M(t,x)}{h}-\frac{v_M(t-\tau,x+he)-v_M(t-\tau,x)}{h}\\
&=\frac{1}{h} \int_{-\tau}^0 \big[(v_M)_s (t+s,x+he)-(v_M)_s (t+s,x)\big]\,ds\\
&=\frac{1}{h} \int_{-\tau}^0\int_{\bR^d} \big[\delta v_M (t+s,x+he,y)-\delta v_M (t+s,x,y)\big]K(t+s,0,y)\,dy ds.
\end{align*}
Using an argument similar to the proof of Lemma \ref{lem2.1}, and \eqref{eq 3.9-2}
we then have
\begin{align}
                                \label{eq3.03}
&[Dv_M]_{(\alpha+\sigma-1)/\sigma,\alpha+\sigma-1;Q_{2^{-k-2}}}
\le C[v_M]^x_{\alpha+\sigma;Q_{2^{-k-1}}}+C\sum_{j=1}^\infty 2^{(k-j)\sigma}M_j+ C2^{k\sigma}[v_M]^x_{\alpha;Q_{2^{-k}}}\nonumber\\
&\le C\sum_{j=1}^k 2^{(k-j)\sigma}M_j +C[u]^x_{\alpha;(-2^{-k\sigma},0)\times \bR^d}+ C2^{k\sigma}[v_M]^x_{\alpha;Q_{2^{-k}}}.
\end{align}
Let
\begin{equation}
\label{eq4.35}
p_1=p_1(t,x)=v_M(t,0)+x\cdot Dv_M(0,0).
\end{equation}
The above inequality and \eqref{eq 3.9-2} imply
\begin{align}
	\nonumber
	&[v_M-p_1]_{\alpha/\sigma,\alpha;Q_{2^{-k-k_0}}} \le C2^{-(k+k_0)\sigma}\sum_{j=1}^k2^{(k-j)\sigma}M_j\\
	\label{eq 3.10-2} &\quad +C2^{-(k+k_0)\sigma}[u]^x_{\alpha;(-2^{-k\sigma},0)\times \bR^d}+C2^{-k_0\sigma}[v_M]^x_{\alpha;Q_{2^{-k}}}.
\end{align}
Next $w_M: = g_M-v_M$, which equals to $u-p_0-v_M$ in $Q_{2^{-k}}$, satisfies
\begin{equation*}
\begin{cases}
\partial_t w_M= L(t,0)w_M + h_M +C_k\quad &\text{in}\,\,Q_{2^{-k}}\\
w_M = 0\quad &\text{in}\,\, \big((-2^{-k\sigma},0)\times B_{2^{-k}}^c\big)\cup \big(\{t=-2^{-k\sigma}\}\times B_{2^{-k}}\big),
\end{cases}
\end{equation*}
where
\begin{equation*}
h_M: = L(t,0)\big(u-p_0-g_M\big)
\end{equation*}
and
\begin{align*}
C_k = f-f(t,0)+b Du-b(t,0)Du(t,0)+(L-L(t,0))u.
\end{align*}
It follows easily that
\begin{align*}
	|C_k|&\le \omega_f(2^{-k})+\omega_b(2^{-k})\|Du\|_{L_\infty(Q_{2^{-k}})}
+\|b\|_{L_\infty}2^{-k\alpha}
[Du]^x_{\alpha;Q_{2^{-k}}}\\
	&\quad+C\omega_a(2^{-k})\sup_{(t_0,x_0)\in Q_{2^{-k}}}\sum_{j=0}^\infty 2^{j(\sigma-\alpha)}\sup_{t\in(t_0-2^{-j\sigma},t_0)}[u(t,\cdot)-p_{t,x_0}]^x_{\alpha;B_{2^{-j}}(x_0)}\\
&\quad +	C\omega_a(2^{-k})\Big(\|Du\|_{L_\infty(Q_{2^{-k}})}+\|u\|_{L_\infty}\Big),
\end{align*}
where $p_{t,x_0}=p_{t,x_0}(x)$ is the first-order Taylor's expansion of $u$ with respect to $x$ at $(t,x_0)$. From Lemma \ref{lem2.2}, we obtain
\begin{align*}
	|C_k|&\le \omega_f(2^{-k})+\omega_b(2^{-k})\|Du\|_{L_\infty(Q_{2^{-k}})}
+\|b\|_{L_\infty}2^{-k\alpha}
[Du]^x_{\alpha;Q_{2^{-k}}}\\
	&\quad +C\omega_a(2^{-k})\sup_{(t_0,x_0)\in Q_{2^{-k}}}\sum_{j=0}^\infty 2^{j(\sigma-\alpha)}\sup_{t\in(t_0-2^{-j\sigma},t_0)}\inf_{p\in \mathcal{P}_x}[u(t,\cdot)-p]^x_{\alpha;B_{2^{-j}}(x_0)}\\
&\quad +	C\omega_a(2^{-k})\Big(\|Du\|_{L_\infty(Q_{2^{-k}})}+\|u\|_{L_\infty}\Big).
\end{align*}
By the dominated convergence theorem, it is easy to see that
\begin{equation*}
\|h_M\|_{L_\infty(Q_{2^{-k}})}\rightarrow 0\quad \text{as}\quad M\rightarrow \infty.
\end{equation*}
Thus, similar to \eqref{eq 3.5}, choosing $M$ sufficiently large so that
\begin{equation*}
\|h_M\|_{L_\infty(Q_{2^{-k}})}\le C_k/2,
\end{equation*}
we have
\begin{align}
	\nonumber	\label{eq 3.11-2}
	&[w_M]_{\alpha/\sigma,\alpha;Q_{2^{-k}}}\\
	\nonumber
	&\le C2^{-k(\sigma-\alpha)}\Big(\omega_f(2^{-k})
+\big(\omega_b(2^{-k})+\omega_a(2^{-k})\big)
\|Du\|_{L_\infty(Q_{2^{-k}})}+2^{-k\alpha}[Du]^x_{\alpha;Q_{2^{-k}}}
	\\
	&\quad+\omega_a(2^{-k})\big(\sup_{(t_0,x_0)\in Q_{2^{-k}}}\sum_{j=0}^\infty 2^{j(\sigma-\alpha)}\sup_{t\in(t_0-2^{-j\sigma},t_0)}\inf_{p\in \mathcal{P}_x}[u(t,\cdot)-p]^x_{\alpha;B_{2^{-j}}(x_0)} +\|u\|_{L_\infty}\big)\Big).
\end{align}
Clearly,
\begin{equation}
                        \label{eq7.58}
\sup_{t\in(t_0-2^{-j\sigma},t_0)}\inf_{p\in \mathcal{P}_x}[u(t,\cdot)-p]^x_{\alpha;B_{2^{-j}}(x_0)}
\le \inf_{p\in \widetilde{\mathcal{P}}}[u-p]^x_{\alpha;Q_{2^{-j}}(t_0,x_0)}.
\end{equation}
By Lemma \ref{lem2.7} (more precisely, its proof)
\begin{equation}\label{eq 1.311-2}
M_j\le C\inf_{p\in \widetilde{\mathcal{P}}}[u-p]^x_{\alpha;Q_{2^{j-k}}}.
\end{equation}
From the triangle inequality and Lemma \ref{lem2.7} with $j=0$,
\begin{align*}
	[v_M]^x_{\alpha;Q_{2^{-k}}} &\le [w_M]^x_{\alpha;Q_{2^{-k}}} + [u-p_0]^x_{\alpha;Q_{2^{-k}}}\\
&\le[w_M]^x_{\alpha;Q_{2^{-k}}} + C\inf_{p\in \widetilde{\mathcal{P}}}[u-p]^x_{\alpha;Q_{2^{-k}}}.
\end{align*}
For $l=1,2,\cdots,$ let $Q^l=Q_{1-2^{-l}}$.  Combining \eqref{eq 3.10-2}, \eqref{eq 3.11-2}, and \eqref{eq 1.311-2}, similar to \eqref{eq 3.8-2}, we then get
\begin{align*}
	&2^{(k+k_0)(\sigma-\alpha)}\sup_{(t_0,x_0)\in Q^l}
\inf_{p\in \widetilde{\mathcal{P}}}[u-p]_{\alpha/\sigma,\alpha;Q_{2^{-(k_0+k)}}(t_0,x_0)} \\
		&\le 2^{(k+k_0)(\sigma-\alpha)}\sup_{(t_0,x_0)\in Q^l}[u-p_0-p_1]_{\alpha/\sigma,\alpha;Q_{2^{-(k_0+k)}}(t_0,x_0)} \\
	&\le C2^{-(k+k_0)\alpha}\sup_{(t_0,x_0)\in Q^l}\sum_{j=1}^k 2^{(k-j)\sigma}\inf_{p\in \widetilde{\mathcal{P}}}[u-p]^x_{\alpha;Q_{2^{j-k}(t_0,x_0)}} \\
	&\quad+ C2^{-(k+k_0)\alpha}[u]^x_{\alpha}+\sup_{(t_0,x_0)\in Q^l}2^{-k\alpha+k_0(\sigma-\alpha)}[Du]^x_{\alpha;Q_{2^{-k}}(t_0,x_0)}\\	
	&\quad + C2^{k_0(\sigma-\alpha)}\Big[\omega_f(2^{-k})+
\big(\omega_b(2^{-k})+\omega_a(2^{-k})\big)\|Du\|_{L_\infty(Q^{l+1})}\\
	&\quad+\omega_a(2^{-k})\Big(\sum_{j=0}^\infty 2^{j(\sigma-\alpha)}\sup_{(t_0,x_0)\in Q^{l+1}}\inf_{p\in \widetilde{\mathcal{P}}}[u-p]^x_{\alpha;Q_{2^{-j}}(t_0,x_0)}+\|u\|_{L_\infty}\Big)
\Big].
\end{align*}
Summing the above inequality in $k = l+1,l+2,\ldots$ as before, we obtain
\begin{align}
	\nonumber
	&\sum_{k=l+1}^\infty 2^{(k+k_0)(\sigma-\alpha)}\sup_{(t_0,x_0)\in Q^l} \inf_{p\in \widetilde{\mathcal{P}}}[u-p]_{\alpha/\sigma,\alpha;Q_{2^{-(k_0+k)}}(t_0,x_0)}\\
	\nonumber
	&\le C2^{-k_0\alpha}\sum_{j=0}^\infty 2^{j(\sigma-\alpha)}\sup_{(t_0,x_0)\in Q^{l+1}}
\inf_{p\in \widetilde{\mathcal{P}}}[u-p]^x_{\alpha;Q_{2^{-j}}(t_0,x_0)}\\
	\nonumber
	&\,\,+ C2^{-(k_0+l)\alpha}[u]^x_{\alpha}+C2^{k_0(\sigma-\alpha)}\sum_{k=l+1}^\infty2^{-k\alpha}\sup_{(t_0,x_0)\in Q^l}[Du]^x_{\alpha;Q_{2^{-k}}(t_0,x_0)}\\
	\nonumber &\,\,+C2^{k_0(\sigma-\alpha)}\Big[\sum_{k=l+1}^\infty
\Big(\omega_f(2^{-k})+\big(\omega_b(2^{-k})
+\omega_a(2^{-k})\big)\|Du\|_{L_\infty(Q^{l+1})}\Big)\\
	\label{eq 3.13-2}
	&\,\,+\sum_{k=l+1}^\infty\omega_a(2^{-k})\Big(\|u\|_{L_\infty}+ \sum_{j=0}^\infty 2^{j(\sigma-\alpha)}\sup_{(t_0,x_0)\in Q^{l+1}}
\inf_{p\in \widetilde{\mathcal{P}}}[u-p]^x_{\alpha;Q_{2^{-j}}(t_0,x_0)}\Big)\Big],
\end{align}
and
\begin{align*}
	&\sum_{j=0}^\infty 2^{j(\sigma-\alpha)}\sup_{(t_0,x_0)\in Q^l} \inf_{p\in \widetilde{\mathcal{P}}}[u-p]_{\alpha/\sigma,\alpha;Q_{2^{-j}}(t_0,x_0)}\\
	&\le C2^{-k_0\alpha}\sum_{j=0}^\infty 2^{j(\sigma-\alpha)}\sup_{(t_0,x_0)\in Q^{l+1}}
\inf_{p\in \widetilde{\mathcal{P}}}[u-p]^x_{\alpha;Q_{2^{-j}}(t_0,x_0)}\\ &\quad +C2^{k_0(\sigma-\alpha)}\|u\|_{L_\infty}+C2^{(l+k_0)(\sigma-\alpha)}[u]^x_{\alpha}+C2^{k_0(\sigma-\alpha)-l\alpha}[Du]^x_{\alpha;Q^{l+1}}\\
&\quad +C2^{k_0(\sigma-\alpha)}\sum_{k=l+1}^\infty\Big(\omega_f(2^{-k})
+(\omega_b(2^{-k})+\omega_a(2^{-k}))\|Du\|_{L_\infty(Q^{l+1})}\Big)\\
	&\quad +C2^{k_0(\sigma-\alpha)}\sum_{k=l+1}^\infty\omega_a(2^{-k}) \sum_{j=0}^\infty2^{j(\sigma-\alpha)}\sup_{(t_0,x_0)\in Q^{l+1}}
\inf_{p\in \widetilde{\mathcal{P}}}[u-p]^x_{\alpha;Q_{2^{-j}}(t_0,x_0)}.
\end{align*}
By choosing $k_0$ and $l$ sufficiently large, and using \eqref{eq 2.3a} and interpolation inequalities (recalling that $\alpha<(\sigma-1)/2$),  we obtain
\begin{align*}
	&\sum_{j=0}^\infty 2^{j(\sigma-\alpha)}\sup_{(t_0,x_0)\in Q^l} \inf_{p\in \widetilde{\mathcal{P}}}[u-p]_{\alpha/\sigma,\alpha;Q_{2^{-j}}(t_0,x_0)}\\
&\le 	\frac 14 \sum_{j=0}^\infty 2^{j(\sigma-\alpha)}\sup_{(t_0,x_0)\in Q^{l+1}}
\inf_{p\in \widetilde{\mathcal{P}}}[u-p]^x_{\alpha;Q_{2^{-j}}(t_0,x_0)}\\
&\quad\quad+ C2^{(k_0+l)(\sigma-\alpha)}\|u\|^x_{\alpha}+C\sum_{k=1}^\infty \omega_f(2^{-k}).
\end{align*}
Therefore,
\begin{align}\label{eq 3.14-2}
\sum_{j=0}^\infty 2^{j(\sigma-\alpha)}\sup_{(t_0,x_0)\in Q^l}
\inf_{p\in \widetilde{\mathcal{P}}}[u-p]_{\alpha/\sigma,\alpha;Q_{2^{-j}}(t_0,x_0)}\le C\|u\|^x_{\alpha}+C\sum_{k=1}^\infty \omega_f(2^{-k}),
\end{align}
which together with Lemma \ref{lem2.1} $(ii)$ (actually the proof of it) gives \eqref{eq:linearglobal}.
\medskip

\textbf{Case 3: $\sigma=1$.}
\medskip

Set $B_0(t) = \int_{t}^0b(s,0)ds$ and we define
\begin{align*}
\hat{u}(t,x) = u(t,x+B_0(t)),\quad \hat{f_\beta}(t,x) = f_\beta(t,x+B_0(t)),\quad \text{and}\quad \hat{b}(t,x) = b(t,x+B_0(t)).
\end{align*}
It is easy to see that in  $Q_\delta$ for some $\delta>0$,
\begin{align}
\partial_t\hat{u}(t,x) &= (\partial_tu)(t,x+B_0(t))-b(t,0)\nabla u(t,x+B_0(t))\nonumber\\
&=\hat L_\beta \hat{u} + \hat{f}_\beta+ (\hat{b}-b(t,0))\nabla \hat{u},\label{eq 2.221-2}
\end{align}
where $\hat L$ is the operator with kernel $a(t,x+B_0(t),y)|y|^{-d-\sigma}$.
For $(t,x)\in Q_{2^{-k}}$,
\begin{align*}
|\hat{f}_\beta(t,x)-\hat{f}_\beta(t,0)|\le \omega_f(2^{-k}),\\
|\hat b-b(t,0)|\le \omega_b((1+N_0)2^{-k}).
\end{align*}
Furthermore,
\begin{align*}
\|Du\|_{L_\infty} + \|\partial_t u\|_{L_\infty} \le (1+N_0)\big(\|D\hat{u}\|_{L_\infty} + \|\partial_t\hat{u}\|_{L_\infty}\big).
\end{align*}
Therefore, it is sufficient to bound $\hat{u}$. In the rest of the proof,  we estimate
the solution to \eqref{eq 2.221-2} and abuse the notation to use $u$ instead of $\hat{u}$ for simplicity.
By scaling, translation and covering arguments, we also assume $u$ satisfies the equation in  $Q_2$.

The proof is similar to the case $\sigma  \in(1,2)$ and we indeed proceed as in the previous case. Let $p_0$ and  $u^{(2^{-k})}$ be as in Case 2. 
We also assume that the solution $v$ to the following equation
\begin{equation*}
\begin{cases}
	\partial_tv =\hat L(t,0)v+\hat f(t,0)-\partial_t p_0\quad \,\,&\text{in}\,\,Q_{2^{-k}}\\
	v = u-p_0\quad \,\,&\text{in}\,\,\big((-2^{-k\sigma},0)\times B_{2^{-k}}^c\big)\cup \big(\{t=-2^{-k\sigma}\}\times B_{2^{-k}}\big),
\end{cases}
\end{equation*}
 exists without carrying out another approximation argument.
By Proposition \ref{prop3.1-linear} with $\sigma= 1$ and Lemma \ref{lem2.7} in \cite{DZ162},
\begin{align}
	\nonumber
	&[v]^x_{1+\alpha;Q_{2^{-k-1}}}\le C\sum_{j=1}^\infty 2^{k-j}M_j+C2^k[v]_{\alpha,\alpha;Q_{2^{-k}}}\\
	\nonumber
	&\le C\sum_{j=1}^\infty 2^{k-j}\sup_{t\in(-2^{-k\sigma},0)}\inf_{p\in \mathcal{P}_x}[u(t,\cdot)-p]^x_{\alpha;B_{2^{j-k}}} + C2^k[v]^x_{\alpha;Q_{2^{-k}}}\\
        \label{eq 3.16-2}
	&\le C\sum_{j=1}^k 2^{k-j}\sup_{t\in(-2^{-k\sigma},0)}\inf_{p\in \mathcal{P}_x}[u(t,\cdot)-p]^x_{\alpha;B_{2^{j-k}}} + C[u]^x_{\alpha}+C2^{k}[v]^x_{\alpha;Q_{2^{-k}}}.
\end{align}
From \eqref{eq 3.16-2} and using the equation, we obtain
\begin{align}
	\nonumber
	&[v-p_1]_{\alpha/\sigma,\alpha;Q_{2^{-k-k_0}}}\\
	\label{eq 3.16a-2}
	&\le C2^{-(k+k_0)}\sum_{j=1}^k 2^{k-j}
\inf_{p\in \widetilde{\mathcal{P}}}[u-p]^x_{\alpha;Q_{2^{j-k}}} +C2^{-k_0}[v]^x_{\alpha;Q_{2^{-k}}}+ C2^{-(k+k_0)}[u]^x_{\alpha},
\end{align}
where $p_1$ is defined as in \eqref{eq4.35}. Next $w :=u-p_0-v$ satisfies
\begin{equation*}
\begin{cases}
\partial_t w= \hat L(t,0)w + C_k\quad &\text{in}\,\,Q_{2^{-k}}\\
w= 0\quad &\text{in}\,\, \big((-2^{-k},0)\times B_{2^{-k}}^c\big)\cup \big(\{t=-2^{-k}\}\times B_{2^{-k}}\big),
\end{cases}
\end{equation*}
where by the cancellation property,
\[
C_k=\hat f-\hat f(t,0)+(\hat b-b(t,0))\nabla u+(\hat L-\hat L(t,0))u,
\]
so that
\begin{align*}
|C_k| \le &\omega_f(2^{-k})+\omega_{b}((1+N_0)2^{-k})\|Du\|_{L_\infty(Q_{2^{-k}})} + C\omega_a((1+N_0)2^{-k}) \\
&\cdot\Big(\sup_{(t_0,x_0)\in Q_{2^{-k}}}\sum_{j=0}^\infty 2^{j(1-\alpha)}\sup_{t\in(t_0-2^{-j\sigma},t_0)}\inf_{p\in \mathcal{P}_x}[u(t,\cdot)-p]^x_{\alpha;B_{2^{-j}}(x_0)}+\|u\|_{L_\infty}\Big).
\end{align*}
Clearly, for any $r\ge 0$,
$$
\omega_\bullet ((1+N_0)r)\le (2+N_0)\omega_\bullet (r).
$$
Therefore, similar to \eqref{eq 3.11-2},  we have
\begin{align}
	\nonumber
	&[w]_{\alpha,\alpha;Q_{2^{-k}}}\le C2^{-k(1-\alpha)}\big( \omega_f(2^{-k})+\omega_b(2^{-k})\|Du\|_{L_\infty(Q_{2^{-k}})}\\
	\label{eq 3.16b-2}
	&+\omega_a(2^{-k})\Big(\sup_{(t_0,x_0)\in Q_{2^{-k}}}\sum_{j=0}^\infty 2^{j(1-\alpha)}\sup_{t\in(t_0-2^{-j\sigma},t_0)}\inf_{p\in \mathcal{P}_x}[u(t,\cdot)-p]^x_{\alpha;B_{2^{-j}}(x_0)}+\|u\|_{L_\infty}\Big).
\end{align}
From the proof of \eqref{eq 2.19a} and the triangle inequality,
\begin{align*}
[v]^x_{\alpha;Q_{2^{-k}}}\le [w]^x_{\alpha;Q_{2^{-k}}}+[u-p_0]^x_{\alpha;Q_{2^{-k}}} \le [w]_{\alpha;Q_{2^{-k}}}+ C\inf_{p\in \widetilde{\mathcal{P}}}[u-p]^x_{\alpha;Q_{2^{-k}}}.
\end{align*}
For $l=1,2,\cdots,$ let $Q^l=Q_{1-2^{-l}}$.   Similar to \eqref{eq 3.8-2}, by combining \eqref{eq 3.16a-2} and \eqref{eq 3.16b-2}, shifting the coordinates, and using the above inequality, we obtain
for $l\ge 1$ and $k\ge l+1$,
\begin{equation}\label{eq:aux209}
\begin{split}
	&2^{(k+k_0)(1-\alpha)}\sup_{(t_0,x_0)\in Q^l}\inf_{p\in \widetilde{\mathcal{P}}}[u-p]_{\alpha/\sigma,\alpha;Q_{2^{-k-k_0}(t_0,x_0)}}\\
	&\le C2^{-(k+k_0)\alpha}\sum_{j=0}^k 2^{k-j}\sup_{(t_0,x_0)\in Q^l}
\inf_{p\in \tilde{\mathcal{P}}}[u-p]^x_{\alpha;Q_{2^{j-k}}(t_0,x_0)}\\
&\quad+C2^{k_0(1-\alpha)}\Big[\omega_f(2^{-k})
+\omega_b(2^{-k})\|Du\|_{L_\infty(Q^{l+1})}\\
	&\quad+\omega_a(2^{-k})\Big(\sum_{j=0}^\infty 2^{j(1-\alpha)}\sup_{(t_0,x_0)\in Q^{l+1}}\inf_{p\in \widetilde{\mathcal{P}}}[u-p]^x_{\alpha, Q_{2^{-j}}(t_0,x_0)}+\|u\|_{L_\infty}\Big)\Big]\\
	&\quad+ C2^{-(k+k_0)\alpha}[u]^x_{\alpha},
\end{split}
\end{equation}
which by summing in $k = l+1,l+2,\ldots$, implies that
\begin{align*}
	&\sum_{k=l+1}^\infty 2^{(k+k_0)(1-\alpha)}\sup_{(t_0,x_0)\in Q^l}
\inf_{p\in \widetilde{\mathcal{P}}}[u-p]_{\alpha/\sigma,\alpha;Q_{2^{-k-k_0}}(t_0,x_0)}\\
	&\le C2^{-k_0\alpha}\sum_{j=0}^\infty 2^{j(1-\alpha)}\sup_{(t_0,x_0)\in Q^{l+1}}
\inf_{p\in \widetilde{\mathcal{P}}}[u-p]^x_{\alpha;Q_{2^{-j}}(t_0,x_0)}\\
	&\quad+C2^{-(k_0+l)\alpha}[u]^x_{\alpha}+C2^{k_0(1-\alpha)}
\sum_{k=l+1}^\infty\omega_f(2^{-k}) \\
	&\quad+ C2^{k_0(1-\alpha)}\sum_{k=l+1}^\infty
\Big[\omega_b(2^{-k})\|Du\|_{L_\infty(Q^{l+1})}+\omega_a(2^{-k})\\
	&\qquad\cdot\Big(\sum_{j=0}^\infty 2^{j(1-\alpha)}\sup_{(t_0,x_0)\in Q^{l+1}}
\inf_{p\in \widetilde{\mathcal{P}}}[u-p]^x_{\alpha; Q_{2^{-j}}(t_0,x_0)}+\|u\|_{L_\infty}\Big)\Big],
\end{align*}
where for the first term on the right-hand side of \eqref{eq:aux209}, we replaced $j$ by $k-j$ and switched the order of the summation as before.
Therefore,
\begin{align*}
	&\sum_{j=0}^\infty 2^{j(1-\alpha)}\sup_{(t_0,x_0)\in Q^l}
\inf_{p\in \widetilde{\mathcal{P}}}[u-p]_{\alpha/\sigma,\alpha;Q_{2^{-j}}(t_0,x_0)}\\
	&\le C2^{-k_0\alpha}\sum_{j=0}^\infty 2^{j(1-\alpha)}\sup_{(t_0,x_0)\in Q^{l+1}}
\inf_{p\in \widetilde{\mathcal{P}}}[u-p]^x_{\alpha;Q_{2^{-j}}(t_0,x_0)}\\
	&\quad+C2^{(l+k_0)(1-\alpha)}[u]^x_{\alpha}
+C2^{k_0(1-\alpha)}\sum_{k=l+1}^\infty \omega_f(2^{-k})\\
	&\quad +C2^{k_0(1-\alpha)}\sum_{k=l+1}^\infty\omega_b(2^{-k})\|Du\|_{L_\infty(Q^{l+1})}
+C2^{k_0(1-\alpha)}\sum_{k=l+1}^\infty \omega_a(2^{-k})\\
	&\qquad\cdot\Big(\sum_{j=0}^\infty 2^{j(1-\alpha)}\sup_{(t_0,x_0)\in Q^{l+1}}
\inf_{p\in \widetilde{\mathcal{P}}}[u-p]^x_{\alpha; Q_{2^{-j}}(t_0,x_0)}+\|u\|_{L_\infty}\Big).
\end{align*}
Then we choose $k_0$ and then $l$ sufficiently large, and apply Lemma \ref{lem2.1} $(iii)$ (actually the proof it) to get
\begin{align*}
	&\sum_{j=0}^\infty 2^{j(1-\alpha)}\sup_{(t_0,x_0)\in Q^l}
\inf_{p\in \widetilde{\mathcal{P}}}[u-p]_{\alpha/\sigma,\alpha;Q_{2^{-j}}(t_0,x_0)}\\
	&\le \frac 14 \sum_{j=0}^\infty 2^{j(1-\alpha)}\sup_{(t_0,x_0)\in Q^{l+1}}
\inf_{p\in \widetilde{\mathcal{P}}}[u-p]^x_{\alpha; Q_{2^{-j}}(t_0,x_0)}\\
	&\quad\quad+C2^{(k_0+l)(1-\alpha)}\|u\|^x_{\alpha} +C\sum_{k=1}^\infty \omega_f(2^{-k}).
\end{align*}
This implies
\begin{align*}
\sum_{j=0}^\infty 2^{j(1-\alpha)}\sup_{(t_0,x_0)\in Q^l}
\inf_{p\in \widetilde{\mathcal{P}}}[u-p]_{\alpha/\sigma,\alpha;Q_{2^{-j}}(t_0,x_0)}\le C\|u\|^x_{\alpha} +C\sum_{k=1}^\infty \omega_f(2^{-k}),
\end{align*}
from which  \eqref{eq:linearglobal} follows.
\end{proof}

\begin{proof}[Proof of Theorem \ref{thm 2}]

As before, Theorem \ref{thm 2} follows from Proposition \ref{prop:linear} using the argument of freezing the coefficients. We only present the detailed proof of Theorem \ref{thm 2} for $\sigma\in (1,2)$. We omit the proof of the case $\sigma\in(0,1]$ since it is almost the same and actually is simpler.

Indeed, the proof here for $\sigma\in(1,2)$ is almost identical to that of Theorem \ref{thm 1}, so we just sketch it.

Without loss of generality, we assume the equation holds in $Q_3$.

{\em Step 1.} For $k=1,2,\ldots$, denote $Q^k := Q_{1-2^{-k}}$.
Let $\eta_k\in C_0^\infty(\widehat Q^{k+3})$ be a sequence of nonnegative smooth cutoff functions satisfying
$\eta\equiv 1$ in $Q^{k+2}$, $|\eta|\le 1$ in $Q^{k+3}$,  $\|\partial_t^jD^i\eta_k\|_{L_\infty}\le C2^{k(i+j)}$ for each $i,j\ge 0$.  Set $v_k := u\eta_k\in C^{1,\sigma+}$ and notice that in $Q^{k+1}$,
\begin{align*}
\partial_tv_k&=\eta_k \partial_tu+\partial_t\eta_k u
=\eta_k L u+\eta_kb Du+\eta_k f+\partial_t\eta_k u\\
&=L v_k+b Dv_k-b uD\eta_k+h_{k}+\eta_k f+\partial_t\eta_k u,
\end{align*}
where
\begin{align*}
h_{k}=\eta_k L u-L v_k=\int_{\bR^d}\frac{
\xi_k(t,x,y)a(t,x,y)}{|y|^{d+\sigma}}\,dy,
\end{align*}
and
\begin{align*}
\xi_k(t,x,y) &= u(t,x+y)(\eta_k(t,x+y)-\eta_k(t,x))-y\cdot D\eta_k(t,x)u(t,x)\\
& =u(t,x+y)(\eta_k(t,x+y)-\eta_k(t,x))\quad\mbox{since }D\eta_k\equiv 0\mbox{ in } Q^{k+1}.
\end{align*}

We will apply Proposition \ref{prop:linear} to the equation of $v_k$ in $Q^{k+1}$ and obtain corresponding estimates for $v_k$ in $Q^k$.

As before,  we have $\eta_kf\equiv f, \partial_t \eta_k u\equiv 0$, and $b uD\eta_k\equiv 0$ in $Q^{k+1}$. Thus, we only need to estimate the moduli of continuity of $h_k$ in $Q^{k+1}$ with respect to $x$. The same proof of \eqref{eq9.32} shows that
\begin{align}
                         \label{eq9.32-2}
\omega_h(r) := C\Big(2^{\sigma k}\|u\|_{L_\infty(Q_3)}+\sum_{j=0}^\infty2^{-j\sigma}\omega_u(2^j)\Big)\omega_a(r)+C2^{k\sigma}\omega_u(r)+C2^{k(\sigma+1)}\|u\|_{L_\infty(Q_3)}r.
\end{align}

As in the proof of Theorem \ref{thm 1}, by making use of Proposition \ref{prop:linear} to $v_k$ and interpolation inequalities, an iteration procedure will lead to
\begin{align}
\label{eq12.09-2}
[u]^x_{\sigma;Q^4}\le C\|u\|_{L_\infty(Q_3)}+C\sum_{j=0}^\infty\big(2^{-j\sigma}\omega_u(2^j) +\omega_u(2^{-j})+\omega_f(2^{-j})\big).
\end{align}
Applying this to the equation of $u(t,x)-u(t,0)$ gives to \eqref{eq12.17-2}.

Finally, since $\|v_1\|^x_{\alpha}$ is bounded by the right-hand side of \eqref{eq12.09-2}, from \eqref{eq 3.14-2}, we see that
\begin{align*}
\sum_{j=0}^\infty 2^{j(\sigma-\alpha)}\sup_{(t_0,x_0)\in Q^l}
\inf_{p\in \widetilde{\mathcal{P}}}[u-p]_{\alpha/\sigma,\alpha;Q_{2^{-j}}(t_0,x_0)}\le C
\end{align*}
for some large $l$.

This and \eqref{eq 3.13-2} with $u$ replaced by $v_1$ and $f$ replaced by $h_{1}+\eta_1 f+\partial_t\eta_1 u-b u D\eta_1$ give
\begin{align*}
&\sum_{j=k_1+1}^\infty 2^{(j+k_0)(\sigma-\alpha)}\sup_{(t_0,x_0)\in Q^{k_1}}
\inf_{p\in \widetilde{\cP}}[v_{1}-p]_{\alpha/\sigma,\alpha;Q_{2^{-j-k_0}}(t_0,x_0)}\\
&\le
C2^{-k_0\alpha}+C2^{k_0(\sigma-\alpha)}
\sum_{j=k_1}^\infty \big(\omega_f(2^{-j})+\omega_a(2^{-j})+\omega_u(2^{-j})
+\omega_b(2^{-j})+2^{-j\alpha}\big).
\end{align*}
Here we also used \eqref{eq9.32-2} with $k=1$.
Therefore, for any small $\varepsilon>0$, we can find $k_0$ sufficiently large then $k_1$ sufficiently large, depending only on $C$, $\sigma$, $N_0$, $\alpha$, $\omega_a$, $\omega_f$, $\omega_b$, and $\omega_u$, such that
$$
\sum_{j=k_1+1}^\infty 2^{(j+k_0)(\sigma-\alpha)}\sup_{(t_0,x_0)\in Q^{k_1}}
\inf_{p\in \widetilde{\cP}}[v_1-p]_{\alpha/\sigma,\alpha;Q_{2^{-j-k_0}}(t_0,x_0)}<\varepsilon.
$$
This, together with the fact that $v_1 = u$ in $Q_{1/2}$ and the proof of Lemma \ref{lem2.1} (ii), indicates that
$$
\sup_{(t_0,x_0)\in Q_{1/2}} [u]^x_{\sigma;Q_r(t_0,x_0)}\to 0 \quad\text{as}\quad r\to 0
$$
with a decay rate depending  only on $d$, $\lambda$, $N_0$, $\Lambda$, $\omega_a$, $\omega_f$, $\omega_b$, $\omega_u$, and $\sigma$.  Also, by evaluating the equation \eqref{eq:linear} on both sides and making use of \eqref{eq7.58}, \cite[Lemma 2.2]{DZ162}, and the dominated convergence theorem, we have that $\partial_t u$ is uniformly continuous in $x$ in $Q_{1/2}$ with a modulus of continuity controlled by $d$, $\sigma$, $\lambda$, $\Lambda$, $\omega_a$, $\omega_f$, $\omega_u$, $N_0$, $\omega_b$, and $\|u\|_{L_\infty}$.

Hence, the proof of the case when $\sigma\in (1,2)$ is completed.
\end{proof}

\begin{proof}[Proof of Theorem \ref{thm:linearschauder2}]
Given the proofs of Theorems \ref{thm:schauder} and \ref{thm:linearschauder}, Theorem \ref{thm:linearschauder2} can be similarly proved (actually simpler), and we omit the details.
\end{proof}

\bibliographystyle{abbrv}

\end{document}